\documentclass{article}

\usepackage[T1]{fontenc}
\usepackage[htt]{hyphenat}

\usepackage{microtype}
\usepackage{graphicx}
\usepackage{subfigure}
\usepackage{booktabs} 
\usepackage{natbib}
\bibliographystyle{abbrvnat}
\usepackage{algorithm}
\usepackage{algorithmic}
\usepackage{siunitx}
\usepackage{subfigure}
\usepackage[subfigure]{tocloft}

\let\oldaddcontentsline\addcontentsline

\newcommand{\starttocentries}{\let\addcontentsline\oldaddcontentsline}

\usepackage{amsmath}
\usepackage{amssymb}
\usepackage{graphicx}
\usepackage{mathtools}
\usepackage{amsfonts}
\usepackage{amsthm,bm}
\usepackage{color}
\usepackage{enumitem}
\usepackage{dsfont}
\usepackage{pgfplots}
\pgfplotsset{width=10cm,compat=1.9}
 \usepackage{pifont}
\usepackage{thmtools} 
\usepackage{thm-restate}
\usepackage{sidecap}

\newcommand{\R}{\mathbb{R}}

\renewcommand{\phi}{\varphi}

\setlength{\parskip}{0.5em}
\setlength\parindent{0pt}

\graphicspath {{figures/}}

\usepackage{hyperref}
\usepackage{caption}
\usepackage[super]{nth}
\delimitershortfall-1sp

\newtheorem{lemma}{Lemma}

\newtheorem{theorem}{Theorem}

\newtheorem{proposition}{Proposition}

\newtheorem{remark}{Remark}
\newtheorem{corollary}{Corollary}

\mathtoolsset{showonlyrefs}

\DeclareMathOperator*{\argmin}{argmin}




\usepackage{breakcites}
\usepackage{authblk}
\usepackage[margin=1.29in]{geometry}
\usepackage{enumitem}

\usepackage{hyperref}

\hypersetup{
     colorlinks = true,
     linkcolor = brown,
     anchorcolor = blue,
     citecolor = teal,
     filecolor = blue,
     urlcolor = black
     }

\title{Computing the Variance of Shuffling Stochastic Gradient Algorithms via Power Spectral Density Analysis}

\author[a]{Carles Domingo-Enrich}
\affil[a]{Courant Institute of Mathematical Sciences, New York University}


\begin{document}

\maketitle


\begin{abstract}%
  When solving finite-sum minimization problems, two common alternatives to stochastic gradient descent (SGD) with theoretical benefits are random reshuffling (SGD-RR) and shuffle-once (SGD-SO), in which functions are sampled in cycles without replacement. Under a convenient stochastic noise approximation which holds experimentally, we study the stationary variances of the iterates of SGD, SGD-RR and SGD-SO, whose leading terms decrease in this order, and obtain simple approximations. To obtain our results, we study the power spectral density of the stochastic gradient noise sequences. Our analysis extends beyond SGD to SGD with momentum and to the stochastic Nesterov's accelerated gradient method. We perform experiments on quadratic objective functions to test the validity of our approximation and the correctness of our findings.
\end{abstract}

\addtocontents{toc}{\protect\setcounter{tocdepth}{0}}

\section{Introduction} \label{sec:intro}
We consider the finite-sum minimization problem
\begin{align}
    x^{\star} = \argmin_{x \in \R^d} \bigg\{ f(x) = \frac{1}{n} \sum_{i=1}^n f_i(x) \bigg\}.
\end{align}
This setting is ubiquitous in machine learning; the standard formulation of supervised learning problems is of this form. Stochastic first-order algorithms, where an iterate $(x_k)_k$ is successively updated using the gradients $\nabla f_i(x_k)$ of a subset of the functions $f_i$, are popular among practitioners thanks to their scalability and low memory cost \citep{bottou2016optimization}. The number of functions used in each update is known as the mini-batch size. 

The simplest stochastic first-order algorithm is \textit{stochastic gradient descent} (SGD). For mini-batch size 1, its update reads $x_{k+1} = x_{k} - \gamma \nabla f_{i_k}(x_k)$, where $(i_k)_k$ are uniformly random, independent indices in $\{1, \dots, n\}$, and $\gamma > 0$ is known as the stepsize. A related algorithm which is heavily used in practice is \textit{stochastic gradient descent with momentum} (SGDM), also known as stochastic heavy-ball momentum. The update is of the form $x_{k+1} = y_k - \gamma \nabla f_{i_k}(x_k)$, $y_k = x_k + \alpha (x_k - x_{k-1})$, where $\alpha \in [0,1)$ is the momentum weight. Computing the gradient at $y_k$ instead of $x_k$ gives rise to the \textit{stochastic Nesterov's accelerated gradient} method (SNAG): $x_{k+1} = y_k - \gamma \nabla f_{i_k}(y_k)$, $y_k = x_k + \alpha (x_k - x_{k-1})$. The deterministic counterparts of SGDM and SNAG are Polyak's heavy-ball method \citep{polyak1964some} and Nesterov's accelerated gradient method \citep{nesterov1983amethod,nesterov2003introductory}.

There is a vast array of works showing convergence rates for stochastic gradient algorithms. When $f_i$ are $L$-smooth and $\mu$-strongly convex\footnote{A function $f$ is $L$-smooth when it is differentiable and its gradient is $L$-Lipschitz: $\|\nabla f(x) - \nabla f(y)\| \leq L\|x-y\|$. It is $\mu$-strongly convex when $f(x) \geq f(y) + \langle \nabla f(y), x - y\rangle + \frac{\mu}{2} \|x-y\|^2$.}, the minimizer $x^{\star}$ is unique and when $\gamma \leq \frac{1}{2L}$ the SGD iterates fulfill \citep{needell2014stochastic,stich2019unified,gower2019sgd}
\begin{align} \label{eq:iterate_error}
\mathbb{E}[\|x_k - x^{\star}\|^2] \leq (1-\gamma \mu)^{k} + \frac{2 \gamma \sigma^2}{\mu}.
\end{align}
Here, $\sigma^2 = \frac{1}{n} \sum_{i=1}^{n} \|\nabla f_i(x^{\star}) \|^2$ is the variance of the gradients at the minimizer. The first term of the bound (the \textit{bias term}) decreases exponentially fast, but the second one (the \textit{variance term}) is stationary and can only be reduced by tuning down the stepsize. Other works \citep{rakhlin2012making,drori2020thecomplexity,nguyen2019tight} use a similar bias-variance decompositions; upon optimizing the bound with respect to $\gamma$ at a fixed horizon $k$, they show matching upper and lower bounds on the quantity $\mathbb{E}[f(x_k) - f(x^{\star})]$ of order $\Theta(\frac{1}{k})$. 

In practice, it is common to use alternative schemes to sample the function $f_{i_k}$ used in the $k$-th update, c.f. \cite{mishchenko2020random}. A very popular variation is \textit{random reshuffling} (RR), in which training is divided into epochs of $n$ iterations. At the beginning of the epoch $T$, the indices $i_{nT}, i_{nT + 1}, \dots, i_{(n+1)T - 1}$ are sampled \textit{without replacement} from $\{1, \dots, n\}$, i.e. $(i_{nT}, i_{nT + 1}, \dots, i_{(n+1)T - 1})$ is a random permutation of $\{1, \dots, n\}$. Another frequent heuristic is \textit{shuffle-once} (SO), which is like RR but such that the functions are shuffled only before the first epoch, and the permutation is reused in subsequent epochs. 
Note that RR and SO may be combined with any first-order stochastic algorithm, giving rise for example to SGDM-RR or SNAG-SO.

Compared to SGD, the quantity $\mathbb{E}[f(x_k) - f(x^{\star})]$ has been harder to analyze theoretically for these variants, as gradient estimates are not conditionally unbiased. The early attempt by \cite{recht2012toward} to study RR relied on a conjecture that is false in general \citep{lai2020rechtre}. Later on, \cite{gurbuzbalaban2015whyrandom, haochen2019random, nagaraj2019sgd, mishchenko2020random} showed that SGD-RR enjoys a faster convergence rate than SGD under smoothness (without smoothness SGD is optimal). More recently, \cite{safran2020howgood,rajput2020closing} manage to get matching upper lower and bounds for SGD-RR of order $O(\frac{1}{k^2} + \frac{n^2}{k^3})$, which is faster than the rate $\Theta(\frac{1}{k})$ for SGD. \cite{safran2020howgood} also prove matching bounds for SGD-SO of order $\Theta(\frac{n}{k^2})$, showing that it performs worse than SGD-RR but better than GD. 



\textbf{Our approach.} Suppose that $x^{\star}$ is a local minimizer of $f$ and that the Hessian $A := Hf(x^{\star})$ is strictly positive definite, with smallest eigenvalue $\mu$. Assuming that the functions $f_i$ are twice-differentiable, by Taylor's theorem we can write $\nabla f_i(x) = \nabla f_i(x^{\star}) + Hf_i(x^{\star})(x-x^{\star}) + R_i(x)$, where the residual $R_i$ fulfills $R_i(x) = o(\|x-x^{\star}\|)$. Defining 
\begin{align}
    \widehat{\nabla f_i}(x) = \nabla f_i(x^{\star}) + A(x-x^{\star}),
\end{align}
we can write $\nabla f_i(x) = \widehat{\nabla f_i}(x) + (Hf_i(x^{\star})-A)(x-x^{\star}) + R_i(x)$, and since $R_i(x) + (Hf_i(x^{\star})-A)(x-x^{\star}) = O(\|x-x^{\star}\|)$, we obtain that $\nabla f_i(x) = \widehat{\nabla f_i}(x) + O(\|x-x^{\star}\|)$. That is, $\widehat{\nabla f_i}(x)$ is an approximation of $\nabla f_i(x)$ to zero-th order; we refer to this setting as the \textit{zero-th order noise model}.
Since the stationary variance of iterates decreases proportionally to the stepsize $\gamma$ (equation \eqref{eq:iterate_error}), in the regime $\gamma \ll 1$ we obtain that $\widehat{\nabla f_{i_k}}(x_k)$ is a good proxy for $\nabla f_{i_k}(x_k)$ when $k$ is large enough that the bias term is negligible. We discuss further the consistency of the zero-th order noise model in \autoref{app:validity}. 
Our setting is a particular case of one studied by \cite{gitman2019understanding}, which assume access to gradient estimates $g_k = \nabla f(x_k) + \chi_k$ with $\chi_k$ a random zero-mean vector independent of $x_k$. Our model amounts to using $\chi_k = \nabla f_{i_k}(x^{\star}) - \nabla f(x_k) + A(x-x^{\star})$. Being more tailored to the finite-sum minimization problem, we are able to obtain more fine-grained results.

\textbf{Contributions.} Under the approximation $\nabla f_i(x) \approx \widehat{\nabla f_i}(x)$, we perform an analysis of the stationary covariance $\mathbb{E}[(x_k - x^{\star})(x_k - x^{\star})^{\top}]$ for SGD, SGDM and SNAG, under different shuffling schemes: vanilla (with replacement), RR and SO. Our main contributions are as follows:
\begin{itemize}[leftmargin=6mm,itemsep=-2pt,topsep=-2pt]
    \item We obtain exact analytic expressions for the stationary variance of the iterates of SGD, SGDM and SNAG with replacement, and an approximation that goes as $\Theta(\gamma/(1-\alpha))$ (\autoref{sec:with_replacement}).
    \item We also derive exact analytic expressions and approximations for the stationary variance of the iterates of SGD, SGDM and SNAG with SO (\autoref{sec:SO}) and RR (\autoref{sec:RR}). For SO, the approximation goes as $\Theta(\gamma^2 n/(1-\alpha)^2)$ and for RR it is similar but with a worse dependency on $n$. We reproduce this finding experimentally, and it runs counter to the convergence bounds of \cite{safran2020howgood} showing how RR outperforms SO.
    \item We perform experiments (\autoref{sec:experiments} and \autoref{sec:more_experiments}) which show that our analytic expressions match the empirical variances under the zero-th order noise model, and also that iterate variances under our noise model are very close to variances under the standard stochastic gradient noise, except in some cases in \autoref{sec:more_experiments}.
\end{itemize}
Compared to the standard analyses that provide upper and lower bounds on $\mathbb{E}[f(x_k) - f(x^{\star})]$, our approach has pros: we get direct theory-based numerical comparisons which prescribe the best method to use when the algorithm is close to convergence, and cons: we do not model the transient time of each algorithm, e.g. arguably SGD-SO has lower stationary variance than SGD-RR but higher transient time, which is why its convergence rate is worse.  
To obtain our results, we model the noise term as a wide-sense stationary process, and the iterates as a linear time-independent (LTI) transformation of the noise. We compute the autocorrelation function for the noise sequence and its Fourier function: the power spectral density of the noise. Using the transfer function for the LTI transformation, we obtain the power spectral density of the iterates, which then yields the covariance.

\textbf{Further related work.}
Analogous bounds on $\mathbb{E}[f(x_k) - f(x^{\star})]$ have been devised for SGDM \citep{liu2020animproved} 
and SNAG \citep{aybat2019auniversally}. Another work that provides expressions for the stationary variance of stochastic algorithms with momentum is \cite{gitman2019understanding}, but their assumptions are not tailored to finite-sum problems and the resulting expressions are not simple. On a different topic, alternative permutation-based SGD with better convergence bounds have been derived \citep{rajput2022permutationbased} There has also been work studying stochastic gradient algorithms from the continuous time perspective \citep{su2016adifferential,yang2018thephysical}.


\section{Preliminaries} \label{sec:preliminaries}

\textbf{Notation.}
We denote by $\mathcal{S}(\R^d)$ the space of Schwartz functions, which contains the functions in $\mathcal{C}^{\infty}(\R^{d})$ whose derivatives of any order decay faster than polynomials of all orders, i.e. for all $k, p \in (\mathbb{N}_0)^d$, $\sup_{x \in \R^d} |x^{k} \partial^{(p)} \phi(x)| < +\infty$. We denote by $\mathcal{S}'(\R^{d})$ the dual space of $\mathcal{S}(\R^d)$, which is known as the space of tempered distributions on $\R^{d}$.

\textbf{Wide-sense stationary processes and the autocorrelation function.} A $\mathbb{R}^d$-valued random process $y = (y_k)_{k \in \mathbb{Z}}$ is wide-sense stationary when its first moment $m_{y}(k) := \mathbb{E}[y_k]$ and autocovariance $K_{y}(k,k+\kappa) = \mathbb{E}[(y_k - m_y(k))(y_{k+\kappa} - m_y(k+\kappa))^{\top}]$ do not depend on the time $k$, and its second moment is finite at all times. That is, for all $k, \kappa \in \mathbb{Z}^{+}$,
\begin{align}
    m_y(k+\kappa) = m_y(\kappa), \quad K_{y}(k,k+\kappa) = K_{y}(0,\kappa), \quad \mathbb{E}[\|y_k\|^2] < +\infty.
\end{align}
This also implies that the \textit{autocorrelation} $R_{y}(k,k+\kappa) := \mathbb{E}[y_k y_{k+\kappa}^{\top}] = K_{y}(k,k+\kappa) + m_y(k) m_y(k+\kappa)^{\top}$ does not depend on $k$ but only on the time difference $\kappa$. Hence, we define the autocorrelation function $R_{y}(\kappa) = R_{y}(k,k+\kappa)$.

\textbf{The power spectral density.} For wide-sense stationary processes, one can use the autocorrelation function to compute the power spectral density, which provides information on the frequency content of the process. When $R_y$ decays fast enough (absolute summability, i.e. $\sum_{k=-\infty}^{\infty} \|R_y\| < +\infty$), the power spectral density is a function $S_y$ on $\R$ with period 1 defined as 
\begin{align} \label{eq:S_fourier_R}
    S_y(f) = \sum_{k=-\infty}^{+\infty} e^{-2\pi i k f} R_y(k). 
\end{align}
Note that $(R_y(-k))_k$ is the Fourier series of $S_y$. When $R_y$ is absolutely summable, the Weierstrass M-test readily shows that the convergence of \eqref{eq:S_fourier_R} is uniform. When $R_y$ is not absolutely summable, the power spectral density cannot be defined as a function in general, but it can still be defined as a tempered distribution; we defer the construction to \autoref{app:general_def}. 

\textbf{Linear time invariant (LTI) systems.} Discrete-time $\mathbb{R}^{d}$-valued LTI systems are maps of the form $b = (b_k)_{k \in \mathbb{Z}} \mapsto x = (x_k)_{k \in \mathbb{Z}}$, where
$x_k = (h * b)_k := \sum_{k'=-\infty}^{\infty} H_{k-k'} b_{k'}$.
Here, $h = (H_k)_{k \in \mathbb{Z}}$ is a sequence of matrices in $\mathbb{R}^{d \times d}$ which is known as the \textit{impulse response} of the LTI system, and characterizes it completely. As the name suggests, LTI systems are exactly the maps for which the output $b$ is a linear function of the input, and any time shift on the input yields the same shift on the output, i.e. if $(b_k)_{k \in \mathbb{Z}} \mapsto (x_k)_{k \in \mathbb{Z}}$, then $(b_{k-\kappa})_{k \in \mathbb{Z}} \mapsto (x_{k-\kappa})_{k \in \mathbb{Z}}$ for any $\kappa \in \mathbb{Z}$.

To analyze discrete time LTI systems, it is useful and customary to introduce the \textit{Z transform}, which for a sequence $(y_k)_{k\in \mathbb{Z}}$ is the function $Y : \mathbb{C} \to \mathbb{C}$ defined as $Y(z) := \mathcal{Z}\{y\}(z) = \sum_{k=-\infty}^{\infty} y_k z^{-k}$. Note, for example, that the relation \eqref{eq:S_fourier_R} between the power spectral density and the autocorrelation function may be written succintly as $S_y(f) = \mathcal{Z}\{R_y\}(e^{2\pi i f})$. Importantly, the Z transforms of the input, output and impulse response of the LTI system $x = h * b$ fulfill $X(z) = H(z) B(z)$. The Z transform $H(z) = \sum_{k=-\infty}^{\infty} H_k z^{-k}$ of the impulse response, which is matrix-valued in our context, is known as the \textit{transfer function} of the LTI system. 

The following theorem relates the power spectral densities of the output and input of an LTI system. This kind of result is very well-known, but we provide a proof in \autoref{app:proof_thm_1} because it is usually stated in continuous time and dimension 1. In fact, we prove a more general result (\autoref{thm:LTI_PSD_2}) in which the autocorrelation $R_b$ of the input is not assumed absolutely summable, which means that the power spectral densities $S_x, S_b$ are defined as tempered distributions. 

\begin{theorem} \label{thm:LTI_PSD}
Suppose that $b = (b_k)_{k \in \mathbb{Z}}$ is a $\mathbb{R}^d$-valued wide-sense stationary process and $x = h*b = (x_k)_{k \in \mathbb{Z}}$ is the output of $b$ by an LTI system with impulse response $h = (H_k)_{k \in \mathbb{Z}}$, where $H_k \in \R^{d \times d}$. 
Suppose that the impulse response is absolutely summable: $\sum_{k=-\infty}^{\infty} \|H_k\| < +\infty$, and that the autocorrelation function $R_b$ of the input is also absolutely summable. Then $x$ is a wide-sense stationary process with power spectral density: $S_x(f) = H(e^{-2\pi i f}) S_b(f) H(e^{2\pi i f})^{\top}$.
\end{theorem}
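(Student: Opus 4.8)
The plan is to compute the autocorrelation $R_x$ directly from the convolution formula, check that it depends only on the time lag (which, together with mean and second-moment verifications, establishes wide-sense stationarity), and then recognize its Fourier series on the unit circle as the asserted product of transfer functions. First I would handle stationarity. The mean $m_x(k) = \mathbb{E}[x_k] = \sum_{k'} H_{k-k'} m_b = \big(\sum_{j} H_j\big) m_b$ is independent of $k$ because $b$ is wide-sense stationary, so $m_b$ is constant. For the autocorrelation, I expand the convolution and pull the (finite-second-moment) expectation inside the sum:
\[
R_x(k,k+\kappa) = \mathbb{E}[x_k x_{k+\kappa}^\top] = \sum_{k'}\sum_{k''} H_{k-k'}\, R_b(k''-k')\, H_{k+\kappa-k''}^\top .
\]
Reindexing with $j = k-k'$ and $l = k+\kappa-k''$ (so that $k''-k' = \kappa+j-l$) turns this into $\sum_{j}\sum_{l} H_j\, R_b(\kappa+j-l)\, H_l^\top$, which is manifestly independent of $k$; I write $R_x(\kappa)$ for it. Finiteness of the second moment is then immediate from $\mathbb{E}[\norm{x_k}^2] = \Tr{R_x(0)} < +\infty$, so $x$ is wide-sense stationary.

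Second, I would pass to the power spectral density. A preliminary point is to confirm that $R_x$ is itself absolutely summable, so that $S_x$ is legitimately defined by \eqref{eq:S_fourier_R} as a function; bounding $\norm{R_x(\kappa)} \leq \sum_{j,l}\norm{H_j}\,\norm{R_b(\kappa+j-l)}\,\norm{H_l}$ and summing over $\kappa$ gives
\[
\sum_{\kappa}\norm{R_x(\kappa)} \leq \Big(\sum_{j}\norm{H_j}\Big)^2 \sum_{\mu}\norm{R_b(\mu)} < +\infty ,
\]
which uses both summability hypotheses. With absolute convergence in hand, I substitute the formula for $R_x$ into the definition of $S_x$ and change the summation index to $\mu = \kappa+j-l$, so that $e^{-2\pi i \kappa f}$ factors as $e^{2\pi i j f}\,e^{-2\pi i \mu f}\,e^{-2\pi i l f}$. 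Reordering the absolutely convergent triple sum then factors the result as
\[
S_x(f) = \Big(\sum_{j} e^{2\pi i j f} H_j\Big)\, S_b(f)\, \Big(\sum_{l} e^{-2\pi i l f} H_l\Big)^{\!\top} .
\]
Finally I identify the outer factors with the transfer function: since $H(z) = \sum_k H_k z^{-k}$, one has $\sum_{j} e^{2\pi i j f} H_j = H(e^{-2\pi i f})$ and $\big(\sum_{l} e^{-2\pi i l f} H_l\big)^\top = H(e^{2\pi i f})^\top$, yielding the claim.

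The reindexing and the identification of these Fourier-type sums with $H(e^{\pm 2\pi i f})$ are routine. The one place that demands care, and the main obstacle, is rigorously justifying the interchanges of expectation with summation in the autocorrelation step and the reordering of the triple series in the spectral step. Both rest on absolute summability: for the expectation-sum swap I would invoke that the autocorrelation of a wide-sense stationary process is uniformly bounded, $\norm{R_b(\kappa)} \leq \norm{R_b(0)}$ by Cauchy--Schwarz, so the double sum of norms is dominated by $\norm{R_b(0)}\big(\sum_k \norm{H_k}\big)^2 < +\infty$ and Fubini applies; for the reordering of $\sum_{\kappa}\sum_j\sum_l$ I would use the absolute convergence established above. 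Everything remaining is bookkeeping of the noncommutative matrix products, where the only subtlety is keeping the factors $H_j$, $R_b$, and $H_l^\top$ in their correct left-to-right order throughout.
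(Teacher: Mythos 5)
Your proof is correct, and its first half is essentially the paper's: the computation $R_x(\kappa)=\sum_{j,l}H_j R_b(\kappa+j-l)H_l^{\top}$, the Fubini justification via the uniform bound on $\norm{R_b(\cdot)}$ (your operator-norm bound $\norm{R_b(\kappa)}\leq\norm{R_b(0)}$ is valid since $R_b(0)\succeq 0$; the paper uses the cruder bound $\sup_\kappa\norm{R_b(\kappa)}_F\leq\mathrm{Tr}[R_b(0)]$), and the constant-mean and finite-second-moment checks all appear verbatim in the paper's argument. Where you genuinely diverge is the spectral half. You stay in the classical setting: you first prove that $R_x$ is itself absolutely summable, so that $S_x$ is honestly defined by \eqref{eq:S_fourier_R} as a function, and then factor the absolutely convergent triple series directly via the reindexing $\mu=\kappa+j-l$. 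The paper instead proves the more general \autoref{thm:LTI_PSD_2}, in which $S_x$ is defined as a tempered distribution and the identity is established weakly, by pairing against matrix-valued Schwartz test functions $\phi$ and performing the same Fubini-type reordering inside $\langle S_x,\phi\rangle$; \autoref{thm:LTI_PSD} is then read off as the special case where $R_b$ is absolutely summable. Your route is more elementary and even yields a small bonus the paper never states explicitly, namely $\sum_\kappa\norm{R_x(\kappa)}\leq\big(\sum_j\norm{H_j}\big)^2\sum_\mu\norm{R_b(\mu)}<+\infty$, which by the Weierstrass M-test makes $S_x$ a continuous function with uniformly convergent series. What the paper's heavier machinery buys is exactly what your argument cannot reach: for the shuffle-once scheme the noise autocorrelation $R_y(k)=\big(\big(1+\tfrac{1}{n-1}\big)\mathds{1}_{k\equiv 0\,(\mathrm{mod}\,n)}-\tfrac{1}{n-1}\big)\Sigma$ is not absolutely summable, and the variance formula \eqref{eq:mse_SO} rests on the distributional \autoref{thm:LTI_PSD_2} and \autoref{lem:variance_S_x}, to which your direct Fourier-series factorization does not extend. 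One small point both you and the paper gloss over: the $L^2$ convergence of the defining series $x_k=\sum_{k'}H_{k-k'}b_{k'}$ is tacitly assumed; it follows from absolute summability of $(H_k)_k$ and the uniform second-moment bound, but deserves a sentence.
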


When $(R_b(k))_k$ is absolutely summable, \autoref{thm:LTI_PSD} implies that
\begin{align} \label{eq:variance_expression}
    \mathbb{E}[\|x_{k}\|^2] = \mathrm{Tr}[R_x(0)] = \int_{0}^{1} \mathrm{Tr}[S_x(f)] \, df = \int_{0}^{1} \mathrm{Tr}[H(e^{-2\pi i f}) S_b(f) H(e^{2\pi i f})^{\top}] \, df. 
\end{align}
A similar result holds true even when $(R_b(k))_k$ is not absolutely integrable (\autoref{lem:variance_S_x} in \autoref{app:proof_thm_1}); the general idea is that we can compute the second moment the output $x$ from the transfer function of the LTI system and the power spectral density or the autocorrelation of the input $b$.

\section{Variance computation for stochastic gradient algorithms with replacement} \label{sec:with_replacement}

In this section we obtain analytic expressions for the variance of the iterates of SGD, SGDM and SNAG when the stochastic gradients are sampled with replacement (i.e. the standard versions of these algorithms) under the zero-th order noise model described in \autoref{sec:intro}. Without loss of generality, we consider functions $(f_i)_{i=1}^n$ and $f$ of the form
\begin{align}
\begin{split} \label{eq:f_form}
    f_i(x) &= \frac{1}{2} x^{\top} A_i x + b_i^\top x, \text{where $A_i \in \R^{d \times d}$ symmetric}, \\
    f(x) &= \frac{1}{2} x^{\top} \bigg(\frac{1}{n} \sum_{i=1}^n A_i\bigg) x + \bigg(\frac{1}{n} \sum_{i=1}^n b_i \bigg)^{\top} x := \frac{1}{2} x^{\top} A x + b x,
\end{split}
\end{align}
where $A := \frac{1}{n} \sum_{i=1}^n A_i$, $b := \frac{1}{n} \sum_{i=1}^n b_i$, and $A$ is assumed to be strictly positive definite. Clearly the minimizer of $f$ is $x^{\star} = - A^{-1} b$. Hence, $\nabla f_i(x) = A_i x + b_i$, $\nabla f(x) = A x + b$ and the zero-th order approximations are $\widehat{\nabla f_i}(x) = \nabla f_i(x^{\star}) + A(x - x^{\star}) = A x - A_i A^{-1} b + b_i + b$. We define $y_i := - A_i A^{-1} b + b_i$, and note that $\widehat{\nabla f_i}(x) = A x + y_i + b$, $\frac{1}{n} \sum_{i=1}^{n} y_i = 0$. 

\textbf{Stochastic gradient algorithms as LTI systems.} The expression for SGDM under the zero-th order noise model is:
\begin{align}
    x_{k+1} &= x_{k} + \alpha(x_{k} - x_{k-1}) - \gamma \widehat{\nabla f_{i_k}}(x_{k}) = x_{k} + \alpha(x_{k} - x_{k-1}) - \gamma (A x_{k} + y_{i_k} + b) 
\end{align}
Defining the recentered iterate sequence $\Delta x_{k} = x_k - x^{\star}$, we obtain that
\begin{align} \label{eq:recentered_SGDM}
    \Delta x_{k+1} = \Delta x_{k} + \alpha(\Delta x_{k} - \Delta x_{k-1}) - \gamma A \Delta x_{k} - \gamma y_{i_k} = ((1+\alpha)\mathrm{Id} - \gamma A) \Delta x_{k} - \alpha \Delta x_{k-1} - \gamma y_{i_k}.
\end{align}
Note that this recurrence can be viewed as an LTI system that maps the noise sequence $(y_{i_k})_k$ to the recentered iterate sequence $\Delta x_{k}$: linear combinations of noise sequences yield linear combinations of recentered iterates, while time shifts on $(y_{i_k})$ yield shifts on $(\Delta x_{k})_k$ since the non-constant recurrence coefficients are time-independent. While the impulse response $(H_k)_{k \in \mathbb{Z}}$ is tedious to compute, we can directly obtain the transfer function by taking the Z-transform on both sides of \eqref{eq:recentered_SGDM}:
\begin{align}
    X(z) &:= \sum_{k=-\infty}^{\infty} z^{-k} \Delta x_{k} = \sum_{k=-\infty}^{\infty} z^{-k} (((1+\alpha)\mathrm{Id} - \gamma A) \Delta x_{k-1} - \alpha x_{k-2} - \gamma y_{i_{k-1}}) \\ &= ((1+\alpha)\mathrm{Id} - \gamma A) z^{-1} \sum_{k=-\infty}^{\infty} z^{-k} \Delta x_{k} - \alpha z^{-2} \sum_{k=-\infty}^{\infty} z^{-k} \Delta x_{k} - \gamma z^{-1} \sum_{k=-\infty}^{\infty} z^{-k} y_{i_{k}} \\ &= (((1+\alpha)\mathrm{Id} - \gamma A) z^{-1} - \alpha z^{-2} \mathrm{Id}) X(z) - \gamma z^{-1} Y(z).
\end{align}
Rearranging and multiplying denominator and numerator by $z$, we obtain that $X(z) = - (z^2 \mathrm{Id} - ((1+\alpha)\mathrm{Id} - \gamma A) z + \alpha \mathrm{Id})^{-1} \gamma z Y(z)$, which by comparison with $X(z) = H(z) Y(z)$ shows that $H(z) = - (z^2 \mathrm{Id} - z((1+\alpha)\mathrm{Id} - \gamma A) + \alpha \mathrm{Id})^{-1} \gamma z \mathrm{Id}$. 

Setting $\alpha = 0$ above, we obtain that for SGD, $\Delta x_{k+1} = (\mathrm{Id} - \gamma A) \Delta x_{k} - \gamma y_{i_k}$, and $H(z) = - (z \mathrm{Id} - \mathrm{Id} + \gamma A))^{-1} \gamma  \mathrm{Id}$.
A similar reasoning shows that for SNAG, $\Delta x_{k+1} = (1+\alpha)(\mathrm{Id} - \gamma A) \Delta x_{k} - \alpha (\mathrm{Id} - \gamma A)\Delta x_{k-1} - \gamma y_{i_k}$, and $H(z) = -(z^2 \mathrm{Id} - z (1 + \alpha) (\mathrm{Id} - \gamma A) + \alpha (\mathrm{Id} - \gamma A))^{-1} z \gamma \mathrm{Id}$.

\textbf{Autocorrelation and power spectral density of the noise sequence.} When there is replacement, note that for $k \neq 0$, $y_{i_{\kappa}}$ and $y_{i_{\kappa+k}}$ are independent uniform random variables over $\{1, \dots, n\}$, and thus $R_y(k) = R_y(\kappa, \kappa + k) = \mathbb{E}[y_{i_{\kappa}} y_{i_{\kappa + k}}^{\top}] = ( \frac{1}{n} \sum_{i=1}^{n} y_{i} ) (\frac{1}{n} \sum_{i=1}^{n} y_{i})^{\top} = 0$,
where we used that $\frac{1}{n} \sum_{i=1}^n y_i = 0$. When $k=0$,
\begin{align} \label{eq:autocorr_y_SGD}
    R_y(0) &= R_y(\kappa, \kappa) = \mathbb{E}[y_{i_{\kappa}} y_{i_{\kappa}}^{\top}] = \frac{1}{n} \sum_{i=1}^{n} \left( - A_i A^{-1} b + b_i \right) \left( - A_i A^{-1} b + b_i \right)^{\top} := \Sigma.
\end{align}
Hence, the autocorrelation of the noise reads $R_y(k) = \mathds{1}_{k = 0} \Sigma$, which is clearly absolutely summable. From definition \eqref{eq:S_fourier_R}, we obtain that $S_y(k) = \Sigma$.

\textbf{Variance of the iterates.} We want to apply \autoref{thm:LTI_PSD} and then use equation \eqref{eq:variance_expression} to compute $\mathbb{E}[\|\Delta x_{k}\|^2]$. The only assumption left to check is that the impulse response $(H_k)_k$ is absolutely summable. Although we have not computed $H_k$ explicitly, we make use of \autoref{lem:continuously_differentiable} in \autoref{app:proofs_replacement}, which proves that $(H_k)_k$ is absolutely summable as long as $f \to H(e^{2\pi i f})$ is continuously differentiable. For SGDM (analogous for the other algorithms), this boils down to ensuring that for complex $z$ of modulus 1, the matrix $z^2 \mathrm{Id} - ((1+\alpha)\mathrm{Id} - \gamma A) z + \alpha \mathrm{Id}$ has full rank, which is the case (see \autoref{lem:eigenvalues_not_zero} in \autoref{app:proofs_replacement}). Thus, \autoref{thm:LTI_PSD} is applicable; we obtain that the recentered iterate sequence $(\Delta x_k)_k$ is a wide-sense stationary process, and that $S_{\Delta x} = H(e^{-2\pi i f}) \Sigma H(e^{2\pi i f})^{\top}$. By \eqref{eq:variance_expression},
\begin{align}
    \mathbb{E}[\|\Delta x_{k}\|^2] = \int_{0}^{1} \mathrm{Tr}[H(e^{-2\pi i f}) \Sigma H(e^{2\pi i f})^{\top}] \, df.  
\end{align}
We can easily see that $\mathbb{E}[\Delta x_k] = 0$: since the expectation is constant over time and $\mathbb{E}[y_{i_k}] = 0$, equation \eqref{eq:recentered_SGDM} implies that $\mathbb{E}[\Delta x_{k}] = ((1+\alpha)\mathrm{Id} - \gamma A) \mathbb{E}[\Delta x_{k}] - \alpha \mathbb{E}[\Delta x_{k}] = 0 \implies \mathbb{E}[\Delta x_{k}] = 0$.
Finally, since $(x_k)_k$ is a translation of $(\Delta x_k)_k$, we get that the trace of the variance of $x_k$, i.e. the sum of the variances of each component of the $x_k$ is equal to:
\begin{align} 
\begin{split} \label{eq:trace_integral}
    \mathrm{Tr}[\mathrm{Var}(x_k)] &= \mathrm{Tr}[\mathrm{Var}(\Delta x_k)] = \mathrm{Tr}[\mathbb{E}[\Delta x_{k} \Delta x_{k}^{\top}] - \mathbb{E}[\Delta x_{k}] \mathbb{E}[\Delta x_{k}]^{\top}] = \mathbb{E}[\|\Delta x_{k}\|^2] \\ &= \int_{0}^{1} \mathrm{Tr}[H(e^{-2\pi i f}) \Sigma H(e^{2\pi i f})^{\top}] \, df
\end{split}
\end{align}

\textbf{Analytic expression of the variance.} Next, we show that the integral in the right-hand side of \eqref{eq:trace_integral} admits an analytic expression in terms of the spectrum of $A$, which diagonalizes in an orthonormal basis and has positive eigenvalues since it is symmetric and positive definite. Let $A = U \Lambda U^{\top}$ be its eigendecomposition. For SGDM (SNAG is analogous), $H(z) = -(z^2 \mathrm{Id} - ((1+\alpha)\mathrm{Id} - \gamma A) z + \alpha \mathrm{Id})^{-1} z \gamma \mathrm{Id}$ also diagonalizes in the basis $U$, and in particular we have that $U^{\top} H(z) = - (z^2 \mathrm{Id} - ((1+\alpha)\mathrm{Id} - \gamma \Lambda) z + \alpha \mathrm{Id})^{-1} z \gamma U^{\top}$. Consequently, the invariance of the trace to changes of basis implies that
\begin{align}
\begin{split} \label{eq:tr_decomposition}
    \mathrm{Tr}[H(e^{-2\pi i f}) &\Sigma H(e^{2\pi i f})^{\top}] = \mathrm{Tr}[U^{\top} H(e^{-2\pi i f}) \Sigma H(e^{2\pi i f})^{\top} U] 
    \\ = \gamma^2 \mathrm{Tr}[&(e^{-4\pi i f} \mathrm{Id} - e^{-2\pi i f} ((1 + \alpha)\mathrm{Id} - \gamma \Lambda) + \alpha \mathrm{Id})^{-1} U^{\top} \Sigma U \cdot \\  &(e^{4\pi i f} \mathrm{Id} - e^{2\pi i f} ((1 + \alpha)\mathrm{Id} - \gamma \Lambda) + \alpha \mathrm{Id})^{-1}] \\ = \gamma^2 \sum_{i=1}^{d} &(u_i^{\top} \Sigma u_i) |e^{-4\pi i f} - e^{-2\pi i f} (1 + \alpha - \gamma \lambda_i) + \alpha |^{-2}, 
\end{split}
\end{align}
which means that $\mathrm{Tr}[\mathrm{Var}(x_k)] = \gamma^2 \sum_{i=1}^{d} (u_i^{\top} \Sigma u_i) \int_{0}^{1} |e^{-4\pi i f} - e^{-2\pi i f} (1 + \alpha - \gamma \lambda_i) + \alpha|^{-2} \, df$. The following lemma, proved in \autoref{app:proofs_replacement}, shows that these integrals admit an exact expression:
\begin{proposition} \label{prop:integral}
    Suppose that $(1-\alpha)^2 + \gamma^2 \lambda^2 - 2 (1 + \alpha) \gamma \lambda \geq 0$. Define 
    \begin{align} \label{eq:rho_sgdm_def}
        \rho_{\pm} = \frac{2(1-\alpha)^2+ \gamma^2 \lambda^2 - 2(1 + \alpha)\gamma \lambda \pm 2 (1-\alpha) \sqrt{(1-\alpha)^2+ \gamma^2 \lambda^2 - 2 (1 + \alpha) \gamma \lambda}}{\gamma^2 \lambda^2},
    \end{align}
    which are both non-negative.
    Then, for all 
    $0 < \epsilon_0 \leq \epsilon_1 < 1$, $\int_{\epsilon_0}^{\epsilon_1} |e^{-4\pi i f} - e^{-2\pi i f} (1 + \alpha - \gamma \lambda) + \alpha|^{-2} \, df$ is equal to
    \begin{align} \label{eq:integral_expression}
        \frac{\left[ \frac{\rho_{+} - 1}{\sqrt{\rho_{+}}} \arctan \left(\frac{\tan(\frac{x}{2})}{\sqrt{\rho_{+}}} \right) + \frac{1 - \rho_{-}}{\sqrt{\rho_{-}}} \arctan \left(\frac{\tan(\frac{x}{2})}{\sqrt{\rho_{-}}} \right) \right]_{-\pi+2\pi \epsilon_0}^{-\pi + 2\pi \epsilon_1}
        }{4 \pi (1-\alpha) \sqrt{(1-\alpha)^2+ \gamma^2 \lambda^2 - 2 (1 + \alpha) \gamma \lambda}}
    \end{align}
\end{proposition}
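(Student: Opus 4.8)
The plan is to collapse the integral to an elementary rational one through two substitutions, after which the stated closed form drops out of a partial-fraction decomposition. First I would turn the integrand into a real rational function of $\cos(2\pi f)$. Writing $w = e^{-2\pi i f}$, the quantity inside the modulus is $w^2 - cw + \alpha$ with $c := 1+\alpha-\gamma\lambda$, and since $|w|=1$ we may factor out $w$ and use $w^{-1}=\bar w$ to get $|w^2 - cw + \alpha|^2 = |w + \alpha\bar w - c|^2 = ((1+\alpha)\cos(2\pi f) - c)^2 + (1-\alpha)^2\sin^2(2\pi f)$. Substituting $x = 2\pi f - \pi$ (this sends $f\in[\epsilon_0,\epsilon_1]$ to the limits $[-\pi+2\pi\epsilon_0,-\pi+2\pi\epsilon_1]$ appearing in \eqref{eq:integral_expression} and flips the signs of $\cos$ and $\sin$) and replacing $\sin^2$ by $1-\cos^2$, the integral becomes $\frac{1}{2\pi}\int \frac{dx}{4\alpha\cos^2 x + 2c(1+\alpha)\cos x + c^2+(1-\alpha)^2}$, a quadratic in $\cos x$.

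Next I would apply the Weierstrass substitution $t = \tan(x/2)$, which is smooth and monotone on $x\in(-\pi,\pi)$; this is precisely why the hypothesis restricts to $0<\epsilon_0\le\epsilon_1<1$, keeping $x$ away from the singularities of $\tan(x/2)$ at $\pm\pi$. With $\cos x = (1-t^2)/(1+t^2)$ and $dx = 2\,dt/(1+t^2)$, the denominator becomes $N(t)/(1+t^2)^2$ with $N$ an even quartic in $t$, and the whole integrand collapses to $\frac{1}{\pi}\,\frac{1+t^2}{N(t)}\,dt$. The key algebraic step is to show, writing $g=\gamma\lambda$ and $\Delta := (1-\alpha)^2 + g^2 - 2(1+\alpha)g$ for the radicand, that $N(t) = g^2(t^2+\rho_+)(t^2+\rho_-)$. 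The coefficient of $t^4$ is $4\alpha + c^2 - 2c(1+\alpha) + (1-\alpha)^2 = g^2$, using $(c-(1+\alpha))^2 = g^2$ and $(1+\alpha)^2-(1-\alpha)^2 = 4\alpha$; and rewriting the definition of $\rho_\pm$ as $\rho_\pm = (\sqrt{\Delta}\pm(1-\alpha))^2/g^2$ gives $\rho_+\rho_- = (\Delta-(1-\alpha)^2)^2/g^4$ and $\rho_++\rho_- = (2\Delta+2(1-\alpha)^2)/g^2$, which match the remaining two coefficients of $N$. This reformulation also makes the non-negativity claim manifest: $\rho_\pm\ge 0$ whenever $\Delta\ge0$.

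Finally, partial fractions in the variable $t^2$ give $\frac{1+t^2}{(t^2+\rho_+)(t^2+\rho_-)} = \frac{A}{t^2+\rho_+}+\frac{B}{t^2+\rho_-}$ with $A = (\rho_+-1)g^2/(4(1-\alpha)\sqrt{\Delta})$ and $B = (1-\rho_-)g^2/(4(1-\alpha)\sqrt{\Delta})$, where I use $\rho_+-\rho_- = 4(1-\alpha)\sqrt{\Delta}/g^2$. Integrating each term via $\int dt/(t^2+\rho) = \rho^{-1/2}\arctan(t/\sqrt{\rho})$, collecting the prefactor $\frac{1}{\pi g^2}$ against the $g^2$ in $A,B$, and substituting $t=\tan(x/2)$ back, produces exactly the bracketed antiderivative of \eqref{eq:integral_expression} over the prefactor $4\pi(1-\alpha)\sqrt{\Delta}$.

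I expect the main difficulty to be bookkeeping rather than conceptual: verifying the three coefficient identities that yield the clean factorization $N(t)=g^2(t^2+\rho_+)(t^2+\rho_-)$, and disposing of the degenerate cases. In particular, the partial-fraction constants $A,B$ blow up when $\rho_+=\rho_-$, i.e. when $\Delta=0$ (the boundary of the hypothesis), so the $\Delta=0$ instance of the identity should be obtained as a limit by continuity of both sides in the parameters; and one must confirm $\rho_\pm>0$ so the $\arctan$ antiderivatives are well defined, which holds since $\gamma\lambda>0$ and $\alpha\in[0,1)$ force $\sqrt{\Delta}+(1-\alpha)>0$ (and $\rho_-=0$ only in the isolated case $\gamma\lambda=2(1+\alpha)$, again handled by a limit).
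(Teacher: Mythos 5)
Your proof is correct, and while it shares the paper's backbone (reduce to a trigonometric rational integral, apply the Weierstrass substitution, factor the resulting quartic as $g^2(t^2+\rho_+)(t^2+\rho_-)$, and finish by partial fractions and $\arctan$), your algebraic organization differs from the paper's in two genuinely useful ways. First, by factoring out $w$ using $|w|=1$ to get $|w^2-cw+\alpha|^2 = \bigl((1+\alpha)\cos(2\pi f)-c\bigr)^2 + (1-\alpha)^2\sin^2(2\pi f)$, you collapse the denominator to a quadratic in $\cos x$ alone; the paper instead keeps both harmonics, obtaining $1+p\cos f + q\cos 2f$ with normalized coefficients $p,q$, and must then invoke the messier Weierstrass identity $\cos 2x = (1-6u^2+u^4)/(1+u^2)^2$. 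Second, rather than solving the quadratic in $v=u^2$ and verifying positivity through the paper's separate computations of $1\pm p+q$ and $1-3q$ (its Lemma~\ref{lem:1_minus_p_plus_q} and the expansion \eqref{eq:sqrt_positive}), you confirm the factorization by matching three coefficients against the closed form $\rho_\pm = (\sqrt{\Delta}\pm(1-\alpha))^2/g^2$, which makes $\rho_\pm \geq 0$ immediate from the hypothesis $\Delta \geq 0$ and renders the discriminant bookkeeping unnecessary; I verified all three of your coefficient identities ($t^4$: $c^2-2c(1+\alpha)+(1+\alpha)^2 = g^2$; $t^2$: $c^2-4\alpha=\Delta$; $t^0$: $g^2\rho_+\rho_- = (2(1+\alpha)-g)^2$) and they hold. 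Two further points in your favor: you work directly with the general limits $\epsilon_0,\epsilon_1$ matching the statement (the paper's proof is written for the symmetric case $\int_\epsilon^{1-\epsilon}$), and you explicitly flag the degenerate cases $\rho_+=\rho_-$ (i.e.\ $\Delta=0$) and $\rho_-=0$ (i.e.\ $\gamma\lambda = 2(1+\alpha)$), where the partial-fraction constants or the term $\frac{1-\rho_-}{\sqrt{\rho_-}}\arctan(\cdot)$ degenerate and the identity must be recovered as a limit by continuity --- cases the paper's proof passes over silently, since its positivity argument only yields $\rho_\pm \geq 0$, not the strict positivity its final formula implicitly requires. What the paper's route buys in exchange is that its $(\xi,\chi,p,q)$ parametrization is set up once and reused verbatim for SNAG in Proposition~\ref{prop:integral2}, whereas your factoring trick is specific to the SGDM characteristic polynomial and would need to be redone there.
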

Since $\lim_{x \to \pm\pi} \arctan \left(\frac{\tan(\frac{x}{2})}{\sqrt{\rho_{+}}} \right) = \lim_{x \to +\infty} \arctan \left( \pm x \right) = \pm \frac{\pi}{2}$, Proposition \ref{prop:integral} implies that if $(1-\alpha)^2 + \gamma^2 \lambda_i^2 - 2 (1 + \alpha) \gamma \lambda_i \geq 0$ for all $i \in \{1,\dots,d\}$, the variance for the SGDM iterates with replacement is
\begin{align} \label{eq:variance_sgdm_replace}
    \mathrm{Tr}[\mathrm{Var}(x_k)] = \gamma^2 \sum_{i=1}^{d} (u_i^{\top} \Sigma u_i) \frac{\sqrt{\rho_{+}^{i}} - \sqrt{\rho_{-}^{i}} - \frac{1}{\sqrt{\rho_{+}^{i}}} + \frac{1}{\sqrt{\rho_{-}^{i}}}}{4 (1-\alpha) \sqrt{(1-\alpha)^2+ \gamma^2 \lambda_i^2 - 2 (1 + \alpha) \gamma \lambda_i}},
\end{align}
where we defined $\rho_{\pm}^{i}$ as $\rho_{\pm}$ in \eqref{eq:rho_sgdm_def} with the choice $\lambda = \lambda_i$. 

For SGD, which corresponds to $\alpha = 0$, we have that $1 + \gamma^2 \lambda^2 - 2 \gamma \lambda = (1-\gamma \lambda)^2 \geq 0$, which means that Proposition \ref{prop:integral} holds for any values of $\gamma, \lambda > 0$. As shown in \autoref{lem:sgd_integral} (\autoref{app:proofs_replacement}), the expressions become simpler: 
\begin{align} \label{eq:variance_SGD_it}
    \rho_{+}^{i} = \frac{4(1-\gamma \lambda_i) +\gamma^2 \lambda_i^2}{\gamma^2 \lambda_i^2}, \quad \rho_{-}^{i} = 1, \quad \mathrm{Tr}[\mathrm{Var}(x_k)] = \gamma \sum_{i=1}^{d} (u_i^{\top} \Sigma u_i) \frac{1}{\lambda_i (2-\gamma \lambda_i)}.
\end{align}
To get an idea of the dependency of the variance \eqref{eq:variance_sgdm_replace} on the momentum parameter $\alpha$, it is convenient to study the regime in which $\gamma \lambda_i \ll 1-\alpha$ for all $i \in \{1,\dots,d\}$. As shown in \autoref{lem:approx_sgdm} (\autoref{app:proofs_replacement}), we obtain: 
\begin{align} 
\begin{split} \label{eq:replacement_limit}
    \rho_{+}^{i} &= \frac{4(1-\alpha)^2}{\gamma^2 \lambda_i^2} + O\left(\frac{1}{\gamma \lambda_i} \right), \quad \rho_{-} = \left(\frac{1+\alpha}{1-\alpha} \right)^{2} + O(\gamma \lambda_{i}), \\ &\text{and} \quad \mathrm{Tr}[\mathrm{Var}(x_k)] = \sum_{i=1}^{d} \frac{\gamma(u_i^{\top} \Sigma u_i)}{2(1-\alpha)\lambda_i} + O\bigg(\sqrt{\frac{\gamma^3}{\lambda_i}} \bigg).
\end{split}
\end{align}
In \autoref{prop:integral2} and \autoref{cor:variance_SNAG} we carry out an analogous program for SNAG with replacement. While the expressions of $\rho_{+}, \rho_{-}$ and the variance for SNAG differs slightly from \eqref{eq:variance_sgdm_replace}, in the limit $\gamma \lambda_i \ll 1-\alpha$ they are equal to \eqref{eq:replacement_limit} as well.

\section{Variance computation for Shuffle-Once stochastic gradient algorithms} \label{sec:SO}

\textbf{Autocorrelation of the noise sequence.} We place ourselves in the setting of \autoref{sec:with_replacement}, except that now we consider the shuffle-once (SO) scheme to sample the gradient estimates (see \autoref{sec:intro}). Under SO, note that for $k \not\equiv 0 \, (\mathrm{mod} \, n)$, $y_{i_{\kappa}}$ and $y_{i_{\kappa+k}}$ are not independent random variables. We have
\begin{align}
\begin{split} \label{eq:SO_correlation_neq_0}
    R_y(\kappa, \kappa + k) &= \mathbb{E}[y_{\kappa} y_{\kappa + k}^{\top}] = 
    \frac{1}{n} \sum_{i=1}^{n} y_i \frac{1}{n-1} \sum_{i' \neq i} y_{i'}^{\top}
    = \frac{1}{n(n-1)} \sum_{i=1}^{n} y_i \sum_{i'=1}^{n} y_i^{\top} - \frac{1}{n(n-1)} \sum_{i=1}^{n} y_i y_i^{\top} 
    \\ &= - \frac{1}{n(n-1)} \sum_{i=1}^{n} \left( - A_i A^{-1} b + b_i \right) \left( - A_{i'} A^{-1} b + b_{i'} \right)^{\top} = -\frac{1}{n-1} \Sigma.
\end{split}    
\end{align}
Here, we used that $\sum_{i=1}^{n} y_i = 0$, and the definition of the matrix $\Sigma$ (see equation \eqref{eq:autocorr_y_SGD}). For $k \equiv 0 \, (\mathrm{mod} \, n)$, we have that $y_{\kappa} = y_{\kappa + k}$ and hence $R_y(k) = \mathbb{E}[y_{\kappa} y_{\kappa}^{\top}]= \Sigma$ by the argument in equation \eqref{eq:autocorr_y_SGD}. Hence, the autocorrelation function takes the form $R_y(k) = ((1+\frac{1}{n-1}) \mathds{1}_{k \equiv 0 \, (\mathrm{mod} \, n)} - \frac{1}{n-1}) \Sigma$. 

\textbf{Analytic expression of the variance.} Note that $R_y$ is not absolutely summable, which means that in this setting, \autoref{thm:LTI_PSD} is not suitable to compute the power spectral density of the sequence $(\Delta x_k)_{k}$. Instead we use \autoref{prop:variance_S_x_SO} in \autoref{app:proofs_sec_SO}, which is a consequence of \autoref{thm:LTI_PSD_2} and \autoref{lem:variance_S_x}, and yields:
\begin{align} \label{eq:mse_SO}
    \mathrm{Tr}[\mathrm{Var}(x_k)] = \mathbb{E}[\|\Delta x_{k}\|^2] = 
    \frac{1}{n-1} \sum_{k'=1}^{n-1} \mathrm{Tr}[\Sigma H(e^{\frac{2\pi i k'}{n}})^{\top} H(e^{-\frac{2\pi i k'}{n}})].
\end{align}
Using \eqref{eq:tr_decomposition}, we obtain that for SGDM,
\begin{align} \label{eq:tr_sgdm_SO}
    \mathrm{Tr}[\mathrm{Var}(x_k)] = \frac{\gamma^2 }{n-1} \sum_{i=1}^{d} (u_i^{\top} \Sigma u_i) \sum_{k'=1}^{n-1} |e^{-\frac{4\pi i k'}{n}} - e^{-\frac{2\pi i k'}{n}} (1 + \alpha - \gamma \lambda_i) + \alpha |^{-2}.
\end{align}
\vspace{-5pt}
\begin{remark}
    Using that the Fourier transforms of the sequence $w_k = \mathds{1}_{k \equiv 0 \, (\mathrm{mod} \, n)}$ and $z_k$ are $W(f) = \frac{1}{n}\sum_{k=-\infty}^{+\infty} \delta(f - \frac{k}{n} )$ and $Z(f) = \delta(f)$, we obtain that the power spectral density of the noise is $S_y(f) = \frac{1}{n-1} \sum_{k=-\infty}^{+\infty} \mathds{1}_{k \neq 0} \delta ( f - \frac{k}{n} )$, and then we can informally derive \eqref{eq:mse_SO} by using \eqref{eq:variance_expression}. This is not formal because $R_y$ is not absolutely summable. 
\end{remark}
\vspace{-2pt}
Note the resemblance of equation \eqref{eq:mse_SO} to equation \eqref{eq:trace_integral}; instead of an integral between 0 and 1, we have an average of equispaced points between $1/n$ and $(n-1)/n$. As expected, in the regime $n \to \infty$ the variance for the SO iterates converges to the variance with replacement. However, when $\gamma \lambda_i n \ll 1-\alpha$ the two expressions have different behaviors, as described in the following proposition. 
\begin{proposition} \label{prop:variance_sgdm_so}
In the regime $n \gg 1$ and $\gamma \lambda_i n \ll 1-\alpha$ for all $i \in \{1,\dots,d\}$, we have that for SGDM-SO and SNAG-SO,
\begin{align} \label{eq:variance_sgdm_so_prop}
    \mathrm{Tr}[\mathrm{Var}(x_k)] = 
    \sum_{i=1}^{d} (u_i^{\top} \Sigma u_i) \frac{2 \gamma^2 n^2 (1 + O(\frac{\gamma \lambda_i}{1-\alpha}) + O(1/n^2))}{\pi^2 (n-1) (1-\alpha)^2}.
\end{align}
\end{proposition}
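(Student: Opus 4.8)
The plan is to reduce the statement to an asymptotic evaluation, in the regime $n\gg1$, $\gamma\lambda_i n \ll 1-\alpha$, of the inner trigonometric sum appearing in \eqref{eq:tr_sgdm_SO},
\begin{equation}
T_i(n) := \sum_{k'=1}^{n-1}\left| e^{-\frac{4\pi i k'}{n}} - e^{-\frac{2\pi i k'}{n}}(1+\alpha-\gamma\lambda_i) + \alpha\right|^{-2},
\end{equation}
so that $\mathrm{Tr}[\mathrm{Var}(x_k)] = \frac{\gamma^2}{n-1}\sum_{i=1}^d (u_i^\top\Sigma u_i)\,T_i(n)$. Setting $w=e^{-2\pi i k'/n}$ and factoring $w^2-(1+\alpha-\gamma\lambda_i)w+\alpha=(w-r_+)(w-r_-)$, the summand equals $|w-r_+|^{-2}|w-r_-|^{-2}$, where $r_\pm$ are real, $r_+\to1$ and $r_-\to\alpha$ as $\gamma\lambda_i\to0$, and a short perturbation computation gives $r_+=1-\frac{\gamma\lambda_i}{1-\alpha}+O((\gamma\lambda_i)^2)$. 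Since the SNAG transfer-function denominator also degenerates to $(w-1)(w-\alpha)$ when $\gamma\lambda_i\to0$, SGDM-SO and SNAG-SO share the same leading-order sum, which is why they are treated together.

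The regime hypothesis is what I would exploit to simplify $T_i(n)$. As a function of $\theta=2\pi k'/n$ the summand has one sharp peak at $\theta=0$ of width $\sim\gamma\lambda_i/(1-\alpha)$, whereas the sample spacing is $2\pi/n$; their ratio $\frac{n\gamma\lambda_i}{2\pi(1-\alpha)}\ll1$ shows that no sampled point with $k'\ge1$ lies inside the peak, so at every such point the denominator $D(\theta):=|w^2-(1+\alpha-\gamma\lambda_i)w+\alpha|^2$ exceeds $(\gamma\lambda_i)^2$ by a large factor. This justifies replacing $D$ by its $\gamma\lambda_i=0$ limit $D_0(\theta)=4\sin^2(\theta/2)\,(1-2\alpha\cos\theta+\alpha^2)$ at each sampled point, with relative error $O(\gamma\lambda_i/(1-\alpha))$. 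Because $\sum_{k'=1}^{n-1}D_0(\theta_{k'})^{-1}$ is dominated by the terms near $\theta=0$ and $\theta=2\pi$, where $1-2\alpha\cos\theta+\alpha^2\to(1-\alpha)^2$, the leading behavior is $\frac{1}{4(1-\alpha)^2}\sum_{k'=1}^{n-1}\csc^2(\pi k'/n)$.

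Evaluating this with the exact identity $\sum_{k'=1}^{n-1}\csc^2(\pi k'/n)=\frac{n^2-1}{3}$ gives $T_i(n)=\frac{n^2-1}{12(1-\alpha)^2}\big(1+O(\gamma\lambda_i/(1-\alpha))+O(1/n^2)\big)$, and substituting into \eqref{eq:tr_sgdm_SO} produces an expression of the announced shape $\sum_i(u_i^\top\Sigma u_i)\frac{\gamma^2 n^2}{(n-1)(1-\alpha)^2}\times\text{const}$, with the leading constant dictated entirely by the cosecant identity. I expect the main obstacle to be the rigorous, uniform error control rather than the leading-order heuristic: the crude replacement of $1-2\alpha\cos\theta+\alpha^2$ by $(1-\alpha)^2$ only yields an $O(1/n)$ remainder, so extracting the sharper $O(1/n^2)$ term requires handling the full rational-in-$e^{2\pi i k'/n}$ summand, and likewise the $D\to D_0$ step must be controlled uniformly in $k'$. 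The clean way to obtain both error terms is the closed-form route: factor out the slowly varying $|w-r_-|^{-2}$, apply $\sum_{k'=0}^{n-1}|e^{-2\pi i k'/n}-\rho|^{-2}=\frac{n(1+\rho^n)}{(1-\rho^2)(1-\rho^n)}$ with $\rho=r_+$, subtract the $k'=0$ term, and expand as $\gamma\lambda_i\to0$; verifying that this reproduces the prefactor claimed in \eqref{eq:variance_sgdm_so_prop} is the crux of the proof.
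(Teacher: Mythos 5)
Your route is genuinely different from the paper's, and carrying it out exposes a real problem with the statement itself. The paper proves \autoref{prop:variance_sgdm_so} by rewriting the summand of \eqref{eq:tr_sgdm_SO} as $\frac{2\pi}{Z}g'(\pi(1-2k'/n))$ via the arctangent antiderivative $g$ from \autoref{prop:integral} (see \autoref{lem:g'_equality}), proving monotonicity and convexity of $g'$, comparing the discrete sum to the integral $g(\pi(1-1/n))-g(-\pi(1-1/n))$ in \eqref{eq:sum_g'}, and evaluating the endpoint asymptotics with \autoref{lem:sqrt_rho_n}. But that comparison yields only the \emph{upper} bound \eqref{eq:average_bound}, which the proof then silently upgrades to an asymptotic equality, and your direct evaluation shows the bound is not tight: since no sampled frequency $k'\geq 1$ lies inside the $O(\gamma\lambda_i/(1-\alpha))$-wide peak, the limiting sum is $\frac{1}{4(1-\alpha)^2}\sum_{k'=1}^{n-1}\csc^2(\pi k'/n)=\frac{n^2-1}{12(1-\alpha)^2}$, giving the leading term $\sum_i(u_i^{\top}\Sigma u_i)\frac{\gamma^2(n+1)}{12(1-\alpha)^2}$, whereas \eqref{eq:variance_sgdm_so_prop} claims $\sum_i(u_i^{\top}\Sigma u_i)\frac{2\gamma^2 n^2}{\pi^2(n-1)(1-\alpha)^2}$; these differ by a factor $24/\pi^2\approx 2.43$. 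You present the final prefactor check as the open "crux," apparently expecting agreement, but your constant is the correct one and the proposition's is not. The mechanism is visible in \eqref{eq:sum_g'}: near $x=\pm\pi$ the terms behave like $\frac{n^2}{4\pi^2 k^2(1-\alpha)^2}$, and bounding the sum by the integral inflates $\zeta(2)=\pi^2/6$ into the corresponding integral, a loss of $12/\pi^2$, with another factor $2$ lost in the last inequality of \eqref{eq:sum_g'}; the product is exactly $24/\pi^2$. The paper's own numerics corroborate you: in \autoref{table:1} ($\gamma=0.0005$, $n=1000$, $\sum_i u_i^{\top}\Sigma u_i\approx 0.0102$), your formula gives $\approx 2.13\times 10^{-7}$ for SGD-SO, matching the reported theory value $2.1278\times10^{-7}$ (computed from the exact sum \eqref{eq:mse_SO}), while the proposition's constant would give $\approx 5.2\times10^{-7}$; similarly for SGDM-SO your formula gives $\approx 5.3\times10^{-6}$ against the reported $5.18\times10^{-6}$, versus $\approx 1.29\times10^{-5}$ from \eqref{eq:variance_sgdm_so_prop}. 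For $\alpha=0$ and $\gamma\lambda_i\to 0$ your cosecant identity is exact, so this refutation is airtight.

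The remaining gaps on your side are minor by comparison, and you name most of them yourself: your stated $O(1/n^2)$ relative error is not what the sketch delivers, since the variation of $1-2\alpha\cos\theta+\alpha^2$ away from the endpoints contributes a relative $O(1/n)$; the replacement $D\to D_0$ also incurs a relative error $O\bigl((\gamma\lambda_i n/(1-\alpha))^2\bigr)$ from the additive $(1-r_+)^2$ term at $k'=\pm 1$, which is $o(1)$ in the stated regime but is not dominated by $O(\gamma\lambda_i/(1-\alpha))$ unless $\gamma\lambda_i n^2\lesssim 1-\alpha$; and for $\alpha>0$ the closed-form route needs the two-root analogue of your kernel identity (partial fractions in $w$ over $r_{\pm}$), not just the single-$\rho$ formula. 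All of this is completable. The substantive conclusion is that your approach, finished properly, corrects the leading constant of \eqref{eq:variance_sgdm_so_prop} from $\frac{2}{\pi^2}\cdot\frac{n^2}{n-1}$ to $\frac{n+1}{12}$, and the same one-sided sum--integral looseness should prompt a re-examination of the $n^{2+\delta}$ leading term of \autoref{prop:variance_sgdm_RR}, whose proof reuses bounds of the \eqref{eq:average_bound} type.
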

The proof is in \autoref{app:proofs_sec_SO}. Interestingly, \autoref{prop:variance_sgdm_so} shows that the variance for SO depends quadratically on $\gamma$ and on $1/(1-\alpha)$, and linearly on $n$. In contrast, equation \eqref{eq:replacement_limit} showed a linear dependence on $\gamma$ and on $1/(1-\alpha)$, and no explicit dependence on $n$.

\section{Variance computation for Random Reshuffle stochastic gradient algorithms} \label{sec:RR}


\textbf{Autocorrelation and power spectral density of the noise sequence.} Note that a priori, the noise sequence for RR is not a wide-sense stationary process; e.g. if $\kappa$ and $\kappa'$ belong to the same reshuffle we get that $\mathbb{E}[y_{\kappa} y_{\kappa'}^{\top}] = - \frac{1}{n-1} \Sigma$ by the argument in \eqref{eq:SO_correlation_neq_0}, while if they belong to a different reshuffle, $\mathbb{E}[y_{\kappa} y_{\kappa'}^{\top}] = 0$ because $y_{\kappa}$ and $y_{\kappa'}$ are independent.
To obtain a wide-sense stationary process, we randomize the first iteration of the reshuffle, which of course does not alter the average iterate variance in time. Given a fixed $\kappa$, let $R$ be the random variable that denotes the first iteration of the first reshuffle that takes place after iteration $\kappa$, which takes values uniformly between $\kappa + 1$ and $\kappa + n$. For any $k > 0$, we get
\begin{align}
    R_y(\kappa, \kappa + k) = \mathbb{E}[y_{\kappa} y_{\kappa + k}^{\top}] = \sum_{i=1}^{n} \text{Pr}(R = \kappa + i) \left(\mathds{1}_{k \geq i} \cdot 0 - \mathds{1}_{k < i} \frac{\Sigma}{n-1} \right) = -\frac{1}{n} \sum_{i=1}^{n} \frac{\mathds{1}_{k < i} \Sigma}{n-1}. 
\end{align}
Since $R_y(0) = \Sigma$ and the autocorrelation function is even, we obtain $R_y(k) = ((1+\frac{1}{n-1}) \delta_{k = 0} - \frac{n-|k|}{n(n-1)} \delta_{|k| \leq n} ) \Sigma$.
$R_y$ is absolutely summable, and $S_y$ is given by (see \autoref{lem:PSD_S_y} in \autoref{app:proofs_sec_RR}):
\begin{align} 
\begin{split} \label{eq:PSD_y_RR}
    \forall f \in [0,1], \quad S_y(f) = r_n(2\pi f) \Sigma, \quad
    \text{where} \ r_n(x) = 
    \frac{n}{n-1} - \frac{\frac{\sin((n-\frac{1}{2}) x)}{\sin(x/2)} + \frac{\sin^2(\frac{(n-1)x}{2})}{\sin^2(\frac{x}{2})}}{n(n-1)}
\end{split}
\end{align}
\textbf{Analytic expression of the variance.} Applying equations \eqref{eq:variance_expression} and \eqref{eq:tr_decomposition}, we obtain that for SGDM,
\begin{align} \label{eq:var_RR}
    \mathrm{Tr}[\mathrm{Var}(x_k)] &= \int_{0}^{1} \mathrm{Tr}[H(e^{-2\pi i f}) \Sigma H(e^{2\pi i f})^{\top}] r_n(2\pi f) \, df \\ &= \gamma^2 \sum_{i=1}^{d} (u_i^{\top} \Sigma u_i) \int_{0}^{1} |e^{-4\pi i f} - e^{-2\pi i f} (1 + \alpha - \gamma \lambda_i) + \alpha |^{-2} r_n(2\pi f) \, df.
\end{align}
By taking the Taylor approximation of $r_n(x)$ around zero, we obtain a more informative expression for the variance of the SGDM (and SNAG) iterates:
\begin{proposition} \label{prop:variance_sgdm_RR}
Let $\delta \in (0,1)$ arbitrary. In the regime $n^{1+\delta} \gg 1$ and $\gamma \lambda_i n^{1+\delta} \ll 1-\alpha$ for all $i \in \{1,\dots,d\}$, we have that for SGDM-RR and SNAG-RR,
    \begin{align} \label{eq:var_sgdm_RR}
        \mathrm{Tr}[\mathrm{Var}(x_k)] = 
        \gamma^2 \sum_{i=1}^{d} (u_i^{\top} \Sigma u_i) \frac{  n^{2+\delta}+O\bigg( \frac{\gamma^2 \lambda_i^2 n^{2+\delta}}{(1-\alpha)^2} + n^{2-\delta} + n + \frac{\gamma \lambda_i}{1-\alpha} \bigg) }{\pi^2 (n-1) (1-\alpha)^2}.
    \end{align}
\end{proposition}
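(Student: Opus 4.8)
The plan is to estimate the per-mode integrals on the right of \eqref{eq:var_RR} one at a time; fix $i$ and abbreviate $a_i := \gamma\lambda_i/(1-\alpha)$, so that the hypothesis $\gamma\lambda_i n^{1+\delta}\ll 1-\alpha$ reads $a_i\ll n^{-1-\delta}$. The argument rests on two local expansions around $f=0$. First, factoring the characteristic polynomial $z^2 - z(1+\alpha-\gamma\lambda_i)+\alpha$ through its root near $z=1$ shows that the transfer-function factor is a sharp Lorentzian, $|e^{-4\pi if}-e^{-2\pi if}(1+\alpha-\gamma\lambda_i)+\alpha|^{-2} = \frac{1}{(1-\alpha)^2(a_i^2+4\pi^2 f^2)}\,(1+O(f)+O(a_i))$, i.e. a peak of width $a_i$ and height $(\gamma\lambda_i)^{-2}$. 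Second, rewriting \eqref{eq:PSD_y_RR} in the closed form $r_n(2\pi f)=\frac{n}{n-1}-\frac{1}{n(n-1)}\frac{\sin^2(n\pi f)}{\sin^2(\pi f)}$ and Taylor expanding gives $r_n(2\pi f)=\frac{\pi^2 n(n+1)}{3}f^2+O(n^4 f^4)$. The qualitative point is that $r_n$ vanishes quadratically at the origin, exactly cancelling the $f^{-2}$ blow-up of the Lorentzian.

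Given these, I would split each integral at a cutoff $f_c$ of order $n^{-1+\delta}$, exploiting that the integrand is even about $f=1/2$ and that $r_n$ carries an identical peak at $f=1$, so $\int_0^1 = 2\int_0^{f_c}+(\text{tail})$ up to the symmetric copy near $1$. On the inner window $(0,f_c)$ we are in the regime $a_i\ll f_c\ll 1$, so the Lorentzian is well approximated by $\frac{1}{(1-\alpha)^2 4\pi^2 f^2}$ and $r_n$ by its quadratic term; their product collapses to the constant $\frac{n(n+1)}{12(1-\alpha)^2}$, and integrating this constant over a window of length $\Theta(n^{-1+\delta})$ yields the announced leading term $\frac{n^{2+\delta}}{\pi^2(n-1)(1-\alpha)^2}$ of \eqref{eq:var_sgdm_RR}, the exact constant being pinned down by the Taylor coefficient $\frac{\pi^2 n(n+1)}{3}$ and the precise cutoff. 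This is the same mechanism as in \autoref{prop:variance_sgdm_so}, with the Riemann sum over the frequencies $k'/n$ in \eqref{eq:variance_sgdm_so_prop} replaced here by an integral against $r_n$.

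It then remains to show that every other contribution is of lower order, producing the error terms in \eqref{eq:var_sgdm_RR}. Retaining the neglected width $a_i$ multiplies the inner integrand by $\frac{4\pi^2 f^2}{a_i^2+4\pi^2 f^2}$ and costs a relative $O(a_i^2)=O(\gamma^2\lambda_i^2/(1-\alpha)^2)$, i.e. the first error term. The tail $\int_{f_c}^{1-f_c}$, where $r_n\le\tfrac{n}{n-1}$ and the Lorentzian has decayed like $f^{-2}$, is dominated by its lower endpoint and contributes $O(n^{1-\delta})$, i.e. the $O(n^{2-\delta})$ term in the stated normalization; the subquadratic $O(f)$ corrections to the Lorentzian and the residual of the with-replacement integral from \autoref{prop:integral} (whose value in this regime is recorded in \eqref{eq:replacement_limit}) account for the $O(n)$ and $O(\gamma\lambda_i/(1-\alpha))$ terms. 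The SNAG-RR case is identical: its transfer function has the same root structure near $z=1$ and the same behaviour of $r_n$ applies, so the leading term and the regime coincide, exactly as in the replacement analysis.

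The main obstacle is controlling the inner window beyond the genuine radius of validity of the Taylor step. The quadratic approximation $r_n(2\pi f)\approx\frac{\pi^2 n(n+1)}{3}f^2$ is only accurate for $f\ll 1/n$, whereas the cutoff $f_c\sim n^{-1+\delta}$ is much larger; on the intermediate band $(1/n,f_c)$ the true $r_n$ has already saturated near $1$ while the quadratic surrogate keeps growing to size $n^{2\delta}$. The delicate part of the proof is therefore to choose $f_c$ and to estimate the contribution of this band together with the quartic remainder $O(n^4 f^4)$ integrated against the Lorentzian tail, so that the quadratic surrogate reproduces the integral up to the stated errors; getting the leading constant and the error orders right hinges on tracking the interplay between the cutoff, the peak width $a_i$, and the saturation scale $1/n$, and it is this balance that fixes the admissible range of $\delta$.
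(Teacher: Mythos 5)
Your preparatory computations are all correct: the Lorentzian factorization of the transfer factor through the root near $z=1$, the Fej\'er resummation $r_n(2\pi f)=\frac{n}{n-1}-\frac{\sin^2(n\pi f)}{n(n-1)\sin^2(\pi f)}$ of \eqref{eq:PSD_y_RR}, and the quadratic Taylor coefficient all agree with \autoref{lem:r_n_taylor}. But the mechanism you propose for the leading term is wrong, and the obstacle you flag in your last paragraph is fatal rather than technical. The product of the Lorentzian tail $\frac{1}{4\pi^2 f^2(1-\alpha)^2}$ with the quadratic surrogate for $r_n$ is the constant $\frac{n(n+1)}{12(1-\alpha)^2}$ only on the band $(a_i,1/n)$; beyond the saturation scale $1/n$ the true $r_n$ is $O(1)$ while the Lorentzian decays like $f^{-2}$, so the honest contribution of the band $(1/n,f_c)$ with $f_c\sim n^{-1+\delta}$ is $O(n/(1-\alpha)^2)$, a factor $n^{\delta}$ short of what you need. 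No tuning of the cutoff constant can repair this — a genuine leading asymptotic cannot be ``pinned down by the precise cutoff'' — so your inner-window computation cannot produce the stated term $\frac{n^{2+\delta}}{\pi^2(n-1)(1-\alpha)^2}$, and the plan as sketched does not close.

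The paper's proof puts the cutoff on the \emph{other} side of the saturation scale and inverts your assignment of leading versus subleading contributions. It splits at $f=\epsilon/(2n)$ with $\epsilon=n^{-\delta}$, i.e.\ at $n^{-1-\delta}\ll 1/n$, so that $nf\le\epsilon/2\ll 1$ on the entire inner window and the Taylor expansion of $r_n$ is valid throughout; after the substitution $y=\sqrt{\rho_+-1}\,\pi f$ this window yields only $\frac{(n+1)\epsilon}{6}+O(n\epsilon^3)$ — precisely your ``constant product'' mechanism, correctly restricted, and it accounts for the \emph{subleading} $O(n^{2-\delta})$ term in \eqref{eq:var_sgdm_RR}, not the leading one. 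The $n^{2+\delta}$ term instead comes from the outer region: using $g'\ge 0$ and the global bound $|r_n|\le\frac{2n}{n-1}$, the outer integral is controlled by the exact antiderivative $g$ of \autoref{lem:g'_equality}, whose endpoint value is evaluated by the cotangent expansion of \autoref{lem:sqrt_rho_n} as $g(\pi(1-\epsilon/n))=\frac{2n}{\pi\epsilon}+O\bigl(\frac{2n}{\pi\epsilon\rho_+}+\frac{\pi\epsilon}{2n}\bigr)$ under $\sqrt{\rho_+}\gg n/\epsilon$ — this is exactly where the hypothesis $\gamma\lambda_i n^{1+\delta}\ll 1-\alpha$ and the constant $1/\pi^2$ enter. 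Note also that this outer step is an upper bound ($r_n$ is far below $\frac{2n}{n-1}$ on $(\epsilon/2n,\,1/n)$, where most of the remaining Lorentzian mass sits), so \eqref{eq:var_sgdm_RR} is obtained as the order produced by that bound; your own band-by-band accounting, pushed through rigorously, would show the true integral is $O(n/(1-\alpha)^2)$ and hence cannot recover the stated expression as a matched asymptotic by any refinement of your route.
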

Note that the leading term of the expression \eqref{eq:var_sgdm_RR} for RR is formally similar to the leading term of the expression \eqref{eq:variance_sgdm_so_prop} for SO, but it has a worse dependency on $n$; $n^{2+\delta}$ instead of $n^2$. We corroborate this finding experimentally in \autoref{sec:experiments}.


\section{Experiments} \label{sec:experiments}
We consider a simple regression problem with loss $f(x) = \frac{1}{2n} \sum_{i=1}^{n} (\langle z_i, x \rangle - y_i)^2$, where $x, z_i \in \R^d$, $y_i \in \R$. We sample $z_i$ independently from the $d$-dimensional standard Gaussian and we choose $y_i = \langle e_1, z_i \rangle + \epsilon_i$, where $e_1$ is the first vector of the canonical basis, and $\epsilon_i$ are i.i.d. Gaussians with mean 0 and standard deviation 0.1. This loss can written as \eqref{eq:f_form} with $A_i = z_i z_i^{\top}$ and $b_i = z_i y_i$. The code for the experiments can be found at \url{https://github.com/CDEnrich/sgd_shuffling}. 

In \autoref{table:1} we show values of the mean squared errors for SGD, SGDM and SNAG in each of the three shuffling schemes, with $n = 1000$, $d=5$, $\gamma = 0.0005$, $\alpha = 0.8$. We run all the algorithms with the standard stochastic gradient noise and with the zero-th order noise model that we introduce, and we also show the theoretical values as predicted by equation \eqref{eq:trace_integral}. Note that the eigenvalues $\lambda_i$ of $A$ are all of the same order, and so are the values $u_i^{\top} \Sigma u_i$. We observe that in this regime, the errors for the standard noise and the zero-th order noise algorithms are very close; within small multiples of the standard deviations for all algorithms except for SGDM-SO and SGDM-RR. This means that the zero-th order noise model is a great proxy for the standard noise. The theoretical errors fall within small multiples of the standard deviations, which confirms the correctness of our framework. Importantly, remark that the errors under SO are about half those of RR for the three algorithms, which is consistent with the leading term of \eqref{eq:variance_sgdm_so_prop} being smaller than \eqref{eq:var_sgdm_RR}. \autoref{fig:table1_1} plots the squared distance between the iterate and the optimum over runs of SGD and SGDM for the three shuffling schemes, and shows that the sequences obtained with both noise models are qualitatively similar.

In \autoref{sec:more_experiments} we show tables and figures comparing the mean squared errors for different values of $n$, $\gamma$ and $\alpha$. To sum up, when $u_i^{\top} \Sigma u_i$ are of different orders, the zero-th order noise model is still valid for algorithms with replacement, but worse for RR and SO in this order.

\begin{table}[]
\small
\centering
\begin{tabular}{|l|l|l|l|}
\hline
Algorithm & Full noise & 0th order noise & Theory  \\ \hline
SGD & $\num{1.2786e-05} \pm \num{1.0661e-07}$ & $\num{1.2805e-05} \pm \num{1.0986e-07}$ & $\num{1.2787e-05}$  \\ \hline
SGD-RR & $\num{4.1589e-07} \pm \num{7.5799e-10}$ & $\num{4.1461e-07} \pm \num{5.2744e-10}$ & $\num{4.1518e-07}$ \\ \hline
SGD-SO & $\num{2.1188e-07} \pm \num{3.1052e-09}$ & $\num{2.1344e-07} \pm \num{3.0547e-09}$ & $\num{2.1278e-07}$ \\ \hline
SGDM & $\num{6.4356e-05} \pm \num{2.2884e-07}$ & $\num{6.4173e-05} \pm \num{3.6560e-07}$ & $\num{6.3935e-05}$
\\ \hline
SGDM-RR & $\num{9.3752e-06} \pm \num{1.6149e-08}$ &  $\num{9.3175e-06} \pm \num{1.0781e-08}$ & $\num{9.3300e-06}$
\\ \hline
SGDM-SO & $\num{5.1690e-06} \pm \num{7.7684e-08}$ &  $\num{5.1967e-06} \pm \num{7.6184e-08}$ & $\num{5.1800e-06}$ 
\\ \hline
SNAG & $\num{6.4346e-05} \pm \num{2.1749e-07}$ & $\num{6.4123e-05} \pm \num{3.6823e-07}$ & $\num{6.3909e-05}$
\\ \hline
SNAG-RR & $\num{9.3684e-06} \pm \num{1.5707e-08}$ &  $\num{9.3101e-06} \pm \num{1.0771e-08}$ & $\num{9.3230e-06}$
\\ \hline
SNAG-SO & $\num{5.1651e-06} \pm \num{7.7623e-08}$ & $\num{5.1926e-06} \pm \num{7.6124e-08}$ & $\num{5.1760e-06
}$
\\ \hline
\end{tabular}
\normalsize
\vspace{3pt}
\caption{Mean squared errors $\mathbb{E}[\|x_k - x^{\star}\|^2]$ for $n = 1000$, $d = 5$;
$\gamma = 0.0005$, $\alpha = 0.8$ (seed 38). The estimates and and their standard deviations are computed over 10 runs of \num{6e6} iterations each. The eigenvalues $\lambda_i$ are $0.1807$, $0.1951$, $0.1998$, $0.2033$, $0.2194$. The values $u_i^{\top} \Sigma u_i$ are, in order, $0.0019$, $0.0019$, $0.0022$, $0.0020$, $0.0022$. The theoretical errors for SGDM and SNAG given by 
\eqref{eq:replacement_limit} and \eqref{eq:replacement_limit_snag} are $\num{6.3933e-05}$.}
\label{table:1}
\end{table}

\begin{figure}[H]
    \centering
    \includegraphics[width=0.95\textwidth]{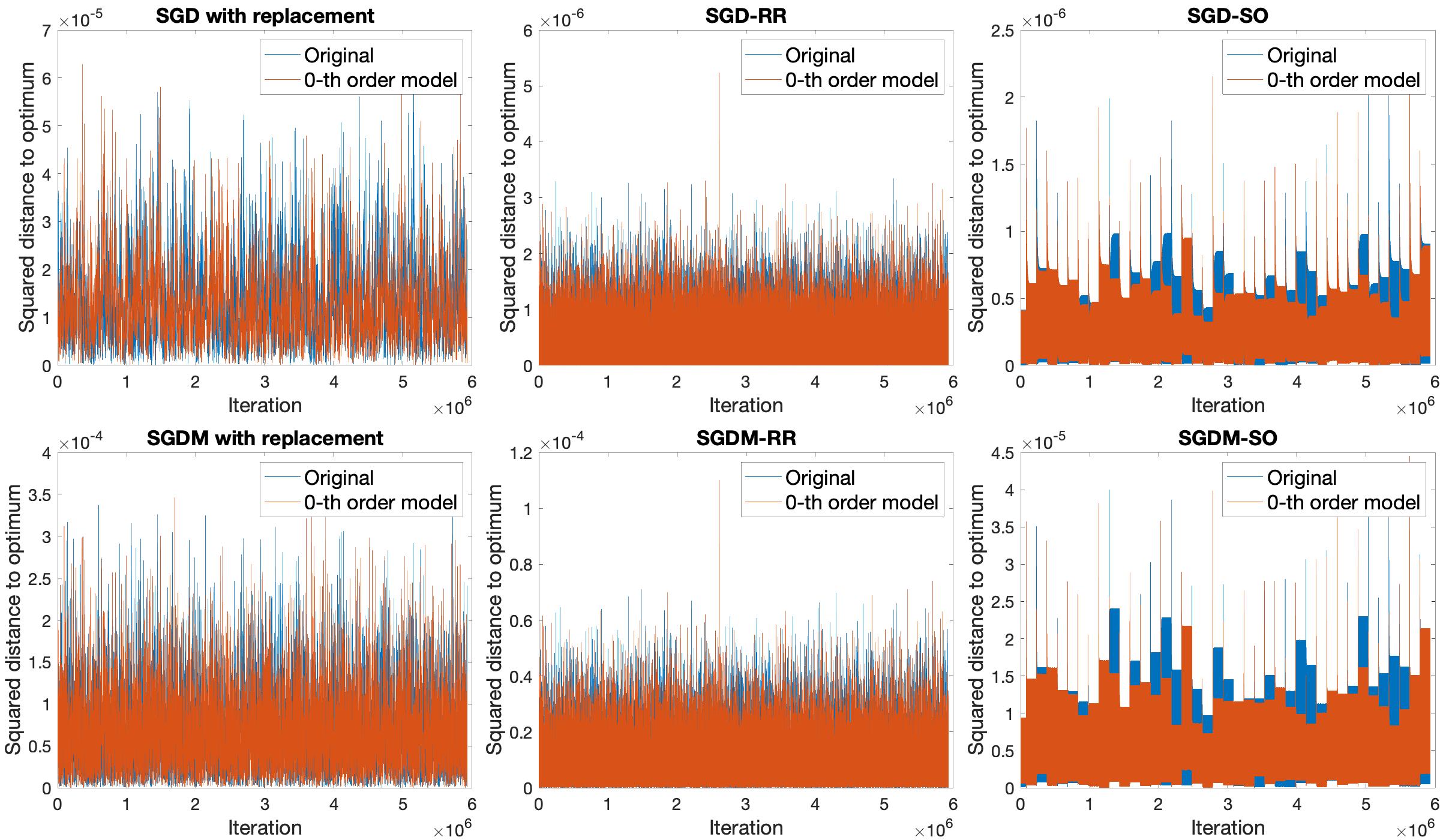}
    \caption{Plots for SGD and SGDM with the three shuffling schemes, in the setting of \autoref{table:1}.}
    \label{fig:table1_1}
\end{figure}




\bibliography{biblio}

\newpage

\appendix

\tableofcontents

\addtocontents{toc}{\protect\setcounter{tocdepth}{2}}

\section{Definition of the power spectral density for general autocorrelations} \label{app:general_def}

We treat the case $d=1$ first for simplicity. Given an autocorrelation function $R_y(\kappa) := \mathbb{E}[y_{k} y_{k+\kappa}]$, the construction is as follows: we consider the tempered distribution $\mathcal{R}_y \in \mathcal{S}'(\R)$ defined as
\begin{align}
    \mathcal{R}_y = \sum_{k=-\infty}^{\infty} R_y(k) \delta_{k}.
\end{align}
The Fourier transform of $\mathcal{R}_y$ is, by definition, the only element $\hat{\mathcal{R}}_y$ of $\mathcal{S}'(\R)$ such that for any Schwartz function $\phi \in \mathcal{S}(\R)$, 
\begin{align}
    \langle \hat{\mathcal{R}}_y, \phi \rangle = \langle \mathcal{R}_y, \hat{\phi} \rangle = \sum_{k=-\infty}^{\infty} R_y(k) \hat{\phi}(k) = \sum_{k=-\infty}^{\infty} R_y(k) \int_{\R} \phi(f) e^{-2\pi i k f} \, df.
\end{align}
We define the power spectral density as the tempered distribution $S_y = \hat{\mathcal{R}}_y$.
In the case $d > 1$, we define $\mathcal{S}_{(d \times d)}(\R) = \{ \phi = (\phi_{i,j})_{i,j=1}^{d} \, | \, \phi_{i,j} \in \mathcal{S}(\R) \}$, and analogously, we define $S_y$ as the only tempered distribution in $\mathcal{S}_{(d \times d)}'(\R)$ such that for any $\phi \in \mathcal{S}_{(d \times d)}(\R)$,
\begin{align}
    \langle S_y, \phi \rangle = \sum_{i,j=1}^{d} \sum_{k=-\infty}^{\infty} [R_y(k)]_{i,j} \int_{\R} \phi_{i,j}(f) e^{-2\pi i k f} \, df = \sum_{k=-\infty}^{\infty} \left\langle R_y(k), \int_{\R} \phi(f) e^{-2\pi i k f} \, df \right\rangle_{F},
\end{align}
where $\langle \cdot, \cdot \rangle_{F}$ denotes the Frobenius inner product of matrices. When $(R_y(k))_k$ is absolutely summable, we can apply the dominated convergence theorem to show that $\sum_{k=-\infty}^{\infty} \langle R_y(k), \int_{\R} \phi(f) e^{2\pi i k f} \, df\rangle_F  = \int_{\R} \langle (\sum_{k=-\infty}^{\infty} e^{2\pi i k f} R_y(k)), \phi(f) \rangle_F \, df$, and if we compare with $\langle S_y, \phi \rangle = \int_{\R} \langle S_y(f), \phi(f) \rangle_F \, df$ we recover the definition \eqref{eq:S_fourier_R}.

Note that $S_y$ has period 1, in the sense that for any $k' \in \mathbb{Z}$, $\langle S_y(f), \phi(f + k') \rangle = \sum_{k=-\infty}^{\infty} \langle R_y(k), \int_{\R} \phi(f+k') e^{2\pi i k f} \, df \rangle_F = \sum_{k=-\infty}^{\infty} \langle R_y(k), \int_{\R} \phi(f) e^{2\pi i k (f-k')} \, df \rangle_F = \langle S_y(f), \phi(f) \rangle$. This allows us to view $S_y$ as an element of the dual of the space $C_{\mathrm{per},(d \times d)}^{\infty}([0,1]) = \{ \phi = (\phi_{i,j})_{i,j=1}^{d} \, | \, \phi_{i,j} \in C_{\mathrm{per}}^{\infty}([0,1]) \}$ of matrix-valued infinitely differentiable functions on $[0,1]$ with periodic boundary conditions. 

\section{Proofs of \autoref{sec:preliminaries}} \label{app:proof_thm_1}

We show a generalization of \autoref{thm:LTI_PSD} in which the autocorrelation function of $R_b$ is not assumed to be absolutely summable.

\begin{theorem} \label{thm:LTI_PSD_2}
Consider the setting of \autoref{thm:LTI_PSD}, with the exception of the absolute summability of the autocorrelation $R_b$ (we do not assume it holds). Then, the output $x$ is a wide-sense stationary process with power spectral density $S_x \in \mathcal{S}_{(d \times d)}'(\R)$ given by
\begin{align} \label{eq:S_x_tempered}
    \forall \phi \in \mathcal{S}_{(d \times d)}(\R), \quad \langle S_x, \phi \rangle = \sum_{k=-\infty}^{\infty} \int_{\R} \langle H(e^{-2\pi i f}) R_b(k) H(e^{2\pi i f})^{\top}, \phi(f) \rangle_{F} \, e^{-2\pi i k f} \, df.
\end{align}
When $(R_b(k))_{k}$ is absolutely summable, the conclusion of \autoref{thm:LTI_PSD} follows from \eqref{eq:S_x_tempered}.
\end{theorem}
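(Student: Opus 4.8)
The plan is to establish wide-sense stationarity of $x$ first, then identify its power spectral density as the tempered distribution given by \eqref{eq:S_x_tempered}, and finally verify that the absolutely-summable case recovers Theorem \ref{thm:LTI_PSD}. First I would show that the output process $x = h*b$ is well-defined and second-order: since $b$ is wide-sense stationary, $\mathbb{E}[\|b_k\|^2]$ is a finite constant, and absolute summability of $(H_k)_k$ together with Minkowski's inequality (applied to the $L^2$ norm of the random vectors) guarantees that the convolution $x_k = \sum_{k'} H_{k-k'} b_{k'}$ converges in $L^2$ and has finite second moment. For the mean, $m_x(k) = \sum_{k'} H_{k-k'} m_b = (\sum_j H_j) m_b$ is independent of $k$. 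For the autocorrelation, I would compute
\begin{align}
    R_x(\kappa,\kappa+k) = \mathbb{E}[x_\kappa x_{\kappa+k}^\top] = \sum_{j,j'} H_j\, \mathbb{E}[b_{\kappa-j} b_{\kappa+k-j'}^\top]\, H_{j'}^\top = \sum_{j,j'} H_j\, R_b(k - j' + j)\, H_{j'}^\top,
\end{align}
which depends only on $k$; this proves $x$ is wide-sense stationary and gives the clean formula $R_x(k) = \sum_{j,j'} H_j R_b(k+j-j') H_{j'}^\top$. The interchange of expectation and the double sum is justified by the $L^2$ convergence established above together with absolute summability of $(H_k)_k$.

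Next I would identify the power spectral density. By the construction in \autoref{app:general_def}, $S_x$ is the tempered distribution with $\langle S_x, \phi \rangle = \sum_k \langle R_x(k), \int_\R \phi(f) e^{-2\pi i k f}\,df\rangle_F$. I would substitute the formula for $R_x(k)$, reindex the triple sum (setting $m = k + j - j'$ to run over the $R_b$ argument), and factor the exponential $e^{-2\pi i k f} = e^{-2\pi i m f}\, e^{-2\pi i (j'-j) f}$. The two factors $\sum_j H_j e^{2\pi i j f} = H(e^{-2\pi i f})$ and $\sum_{j'} H_{j'}^\top e^{-2\pi i j' f} = H(e^{2\pi i f})^\top$ then assemble into the transfer-function sandwich, leaving exactly $\sum_m \int_\R \langle H(e^{-2\pi i f}) R_b(m) H(e^{2\pi i f})^\top, \phi(f)\rangle_F\, e^{-2\pi i m f}\, df$, which is \eqref{eq:S_x_tempered}.

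The main obstacle is justifying the rearrangement of the (now unconditionally convergent but not absolutely convergent in $m$) sums when $R_b$ is \emph{not} absolutely summable, since in that regime the interchange of $\sum_m$ with $\int_\R$ cannot be handled by Fubini or dominated convergence directly. The right tool is the Schwartz-distribution pairing: because $\phi$ is Schwartz, $\hat\phi$ decays faster than any polynomial, so each inner integral $\int_\R \phi(f) e^{-2\pi i k f}\,df = \hat\phi(k)$ is rapidly decreasing in $k$, which tames the polynomially-bounded growth of the wide-sense-stationary autocorrelation and makes every sum absolutely convergent \emph{after} testing against $\phi$. I would phrase the reindexing entirely at the level of the finite-then-limiting partial sums paired against $\phi$, using the rapid decay of $\hat\phi$ to pass to the limit. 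Finally, for the absolutely-summable case I would invoke the dominated-convergence argument already given in \autoref{app:general_def}: absolute summability of $(R_b(k))_k$ (hence of $(R_x(k))_k$, using $\sum_k\|H_k\| < \infty$) lets me pull the sum inside the integral to write $S_x(f) = \sum_m e^{-2\pi i m f} R_x(m)$ as an honest function, and collapsing the sandwich recovers $S_x(f) = H(e^{-2\pi i f}) S_b(f) H(e^{2\pi i f})^\top$, which is the statement of \autoref{thm:LTI_PSD}.
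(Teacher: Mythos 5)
Your proposal is correct and follows essentially the same route as the paper: compute $R_x(k)=\sum_{j,j'} H_j R_b(k+j-j') H_{j'}^{\top}$ (justifying the exchange of expectation and sums via absolute summability of $(H_k)_k$), pair $R_x$ against a Schwartz test function, reindex the triple sum so the exponential factors assemble into $H(e^{-2\pi i f})$ and $H(e^{2\pi i f})^{\top}$, and recover \autoref{thm:LTI_PSD} in the absolutely summable case by pulling the sum inside the integral. One small refinement over your "polynomially-bounded growth" remark: for a wide-sense stationary input, $\|R_b(k)\|_F$ is in fact uniformly bounded by $\mathrm{Tr}[R_b(0)]$ via Cauchy--Schwarz, which is exactly the bound the paper uses to make all sums absolutely convergent after testing against $\phi$.
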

\begin{proof}
First, note that
\begin{align} 
\begin{split} \label{eq:autocorrel_x_d}
    R_x(\kappa, \kappa + k) &= \mathbb{E}[b_{\kappa} b_{\kappa + k}^{\top}] = \mathbb{E} \bigg[\sum_{k'=-\infty}^{\infty} \sum_{k''=-\infty}^{\infty} H_{k'} b_{\kappa - k'} (H_{k''} b_{\kappa + k - k''})^{\top} \bigg] \\ &= \sum_{k'=-\infty}^{\infty} \sum_{k''=-\infty}^{\infty} H_{k'} \mathbb{E}[b_{\kappa - k'} b_{\kappa + k - k''}^{\top}] H_{k''}^{\top} = \sum_{k'=-\infty}^{\infty} \sum_{k''=-\infty}^{\infty} H_{k'} R_b(k + k' - k'') H_{k''}^{\top}.
\end{split}
\end{align}
In the third equality we used Fubini's theorem to exchange the expectation with the infinite summations; we are allowed to do it because 
\begin{align} 
\begin{split} \label{eq:autocorrel_x_d_2}
&\sum_{k'=-\infty}^{\infty} \sum_{k''=-\infty}^{\infty} \|H_{k'} \mathbb{E}[b_{\kappa - k'} b_{\kappa + k - k''}^{\top}] H_{k''}^{\top}\|_F \\ &\leq \sum_{k'=-\infty}^{\infty} \sum_{k''=-\infty}^{\infty} \|H_{k'}\|_F \|R_b(k + k' - k'')\|_F \|H_{k''}^{\top}\|_F \\ &\leq \sup_{k} \|R_b(k)\|_F \bigg(\sum_{k'=-\infty}^{\infty} \|H_{k'}\|_F \bigg)^2 < +\infty.
\end{split}
\end{align}
In the last equality of \eqref{eq:autocorrel_x_d} as well as in \eqref{eq:autocorrel_x_d_2} we used that since $b$ is wide-sense stationary, $\mathbb{E}[b_{\kappa - k'} b_{\kappa + k - k''}^{\top}] = \mathbb{E}[b_{\kappa} b_{\kappa + k + k' - k''}^{\top}] = R_b(k + k' - k'')$. In the first equality of \eqref{eq:autocorrel_x_d_2} we also used that the Frobenius norm is multiplicative, and in the last one we used that $(H_k)_{k \in \mathbb{Z}}$ is absolutely summable, and that $\sup_{k} \|R_b(k)\|_F \leq \sup_{k} \mathbb{E}[\|b_{\kappa} b_{\kappa + k}^{\top} \|_F] \sup_{k} \mathbb{E}[\|b_{\kappa}\| \|b_{\kappa + k}\|] \leq \sup_{k} (\mathbb{E}[\|b_{\kappa}\|^2])^{1/2} (\mathbb{E}[\|b_{\kappa + k}^{\top} \|^2])^{1/2} = \mathrm{Tr}[R_b(0)]$.

Note that $R_x(\kappa, \kappa + k)$ is finite because \eqref{eq:autocorrel_x_d_2} provides an upper bound on its Frobenius norm.
Also, \eqref{eq:autocorrel_x_d} implies that $R_x(\kappa, \kappa + k)$ depends only on the time difference $k$, which means that we can define $R_x(k) = R_x(\kappa, \kappa + k)$. To prove that $x$ is wide-sense stationary, we also check that its mean is constant: $\mathbb{E}[x_k] = \mathbb{E}[\sum_{\kappa=-\infty}^{\infty} H_{\kappa} b_{k-\kappa}] = \sum_{\kappa=-\infty}^{\infty} H_{\kappa} \mathbb{E}[b_{k-\kappa}] = (\sum_{\kappa=-\infty}^{\infty} H_{\kappa}) \mathbb{E}[b_{0}] = H(1) \mathbb{E}[b_{0}]$, and that its second-order moment is finite: by \eqref{eq:autocorrel_x_d},
\begin{align}
\begin{split}
&\mathbb{E}[\|x_k\|^2] = \mathbb{E}[\mathrm{Tr}(x_k x_k^{\top})] = \mathrm{Tr}[R_x(k,k)] = \sum_{k'=-\infty}^{\infty} \sum_{k''=-\infty}^{\infty} \mathrm{Tr}[H_{k'} R_b(k' - k'') H_{k''}^{\top}] \\ &\leq \sup_{\kappa \in \mathbb{Z}} \{ \|R_b(\kappa)\|_2 \} \mathrm{Tr}\bigg[ \bigg(\sum_{k'=-\infty}^{\infty} H_{k'}\bigg) \bigg(\sum_{k''=-\infty}^{\infty} H_{k''}\bigg)^{\top}\bigg] = \sup_{\kappa \in \mathbb{Z}} \{ \|R_b(\kappa)\|_2 \} \mathrm{Tr}[H(1) H(1)^{\top}].
\end{split}
\end{align}
For any $\phi \in \mathcal{S}_{(d \times d)}(\R)$,
\begin{align}
\begin{split}
    \langle S_x, \phi \rangle &= \sum_{k=-\infty}^{\infty} \left\langle R_x(k), \int_{\R} \phi(f) e^{-2\pi i k f} \, df \right\rangle_{F} \\ &= \sum_{k=-\infty}^{\infty} \left\langle \sum_{k'=-\infty}^{\infty} \sum_{k''=-\infty}^{\infty} H_{k'} R_b(k + k' - k'') H_{k''}^{\top}, \int_{\R} \phi(f) e^{-2\pi i k f} \, df \right\rangle_{F} \\ &= \sum_{k=-\infty}^{\infty} \sum_{k'=-\infty}^{\infty} \sum_{k''=-\infty}^{\infty} \mathrm{Tr}\bigg[H_{k'} R_b(k + k' - k'') H_{k''}^{\top} \int_{\R} \phi(f)^{\top} e^{-2\pi i k f} \, df \bigg] \\ &= \sum_{k=-\infty}^{\infty} \sum_{k'=-\infty}^{\infty} \sum_{k''=-\infty}^{\infty} \mathrm{Tr}\bigg[ \int_{\R} H_{k''}^{\top} \phi(f)^{\top} H_{k'} e^{-2\pi i k f} \, df R_b(k + k' - k'') \bigg]
    \\ &= \sum_{k=-\infty}^{\infty} \sum_{k'=-\infty}^{\infty} \sum_{k''=-\infty}^{\infty} \mathrm{Tr}\bigg[ \int_{\R} H_{k''}^{\top} \phi(f)^{\top} H_{k'} e^{-2\pi i (k - k' + k'') f} \, df R_b(k) \bigg]
\end{split}
\end{align}
At this point, we exchange the order of integral and the infinite summations w.r.t. $k'$ and $k''$, appealing to Fubini's theorem and the fact that $(H_k)_k$ is absolutely summable. We obtain:
\begin{align}
\begin{split}
    &\sum_{k=-\infty}^{\infty} \mathrm{Tr}\bigg[ \int_{\R} \bigg(\sum_{k''=-\infty}^{\infty} e^{-2\pi i k'' f} H_{k''}^{\top} \bigg) \phi(f)^{\top} \bigg(\sum_{k'=-\infty}^{\infty} H_{k'} e^{2\pi i k' f} \bigg) e^{-2\pi i k f} \, df R_b(k) \bigg] \\ &= \sum_{k=-\infty}^{\infty} \mathrm{Tr}\bigg[ \int_{\R} H(e^{2\pi i f})^{\top} \phi(f)^{\top} H(e^{-2\pi i f}) e^{-2\pi i k f} \, df R_b(k) \bigg] \\ &= \sum_{k=-\infty}^{\infty} \mathrm{Tr}\bigg[ \int_{\R} H(e^{-2\pi i f}) R_b(k) H(e^{2\pi i f})^{\top} \phi(f)^{\top} e^{-2\pi i k f} \, df \bigg] \\ &= \sum_{k=-\infty}^{\infty} \int_{\R} \langle H(e^{-2\pi i f}) R_b(k) H(e^{2\pi i f})^{\top}, \phi(f) \rangle_{F} \, e^{-2\pi i k f} \, df
\end{split}
\end{align}
In the third equality we used that $\langle A, B \rangle = \mathrm{Tr}[A^\top B]$, and in the fourth equality we used that $\mathrm{Tr}[A B] = \mathrm{Tr}[B A]$ for $A,B$ of the appropriate dimensions. Note that when $(R_b(k))_k$ is absolutely summable, the right-hand side is equal to 
\begin{align}
\begin{split}
    &\int_{\R} \left\langle H(e^{-2\pi i f}) \bigg( \sum_{k=-\infty}^{\infty} e^{-2\pi i k f} R_b(k) \bigg) H(e^{2\pi i f})^{\top}, \phi(f) \right\rangle_{F} \, df \\ &= \int_{\R} \langle H(e^{-2\pi i f}) S_b(f) H(e^{2\pi i f})^{\top}, \phi(f) \rangle_{F} \, df,
\end{split}
\end{align}
which means that the power spectral density $S_x$ is a function: $S_x(f) = H(e^{-2\pi i f}) S_b(f) H(e^{2\pi i f})^{\top}$.
\end{proof}

When $(R_b(k))_k$ is absolutely summable, we have
\begin{align}
    \mathbb{E}[\|x_{k}\|^2] = \mathrm{Tr}[R_x(0)] = \int_{0}^{1} \mathrm{Tr}[S_x(f)] \, df = \int_{0}^{1} \mathrm{Tr}[H(e^{-2\pi i f}) S_b(f) H(e^{2\pi i f})^{\top}] \, df. 
\end{align}
\begin{lemma} \label{lem:variance_S_x}
Consider the setting of \autoref{thm:LTI_PSD}, with the exception of the absolute summability of the autocorrelation $R_b$ (we do not assume it holds). We have that
\begin{align}
    \mathbb{E}[\|x_{k}\|^2] = \sum_{k=-\infty}^{\infty} \int_{0}^{1} \mathrm{Tr}[R_b(k) H(e^{2\pi i f})^{\top} H(e^{-2\pi i f})] \, e^{-2\pi i k f} \, df.
\end{align}
\end{lemma}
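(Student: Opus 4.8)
The plan is to compute $\mathbb{E}[\|x_k\|^2]$ directly from the autocorrelation of the output, which was already derived in the proof of \autoref{thm:LTI_PSD_2}, and then to massage the right-hand side of the claimed identity until the two expressions coincide. Since \autoref{thm:LTI_PSD_2} establishes that $x$ is wide-sense stationary with finite second moment, we have $\mathbb{E}[\|x_k\|^2] = \mathrm{Tr}[R_x(0)]$, and equation \eqref{eq:autocorrel_x_d} gives $R_x(0) = \sum_{k'} \sum_{k''} H_{k'} R_b(k' - k'') H_{k''}^{\top}$, a double series that converges absolutely by the bound \eqref{eq:autocorrel_x_d_2}. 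Thus it suffices to show that the right-hand side of the lemma equals $\sum_{k'} \sum_{k''} \mathrm{Tr}[H_{k'} R_b(k'-k'') H_{k''}^{\top}]$.

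First I would expand the transfer functions as $H(e^{-2\pi i f}) = \sum_{k'} H_{k'} e^{2\pi i k' f}$ and $H(e^{2\pi i f})^{\top} = \sum_{k''} H_{k''}^{\top} e^{-2\pi i k'' f}$, both uniformly convergent since $(H_k)_k$ is absolutely summable (Weierstrass M-test), so that $H(e^{2\pi i f})^{\top} H(e^{-2\pi i f}) = \sum_{k',k''} H_{k''}^{\top} H_{k'} e^{2\pi i (k'-k'') f}$. For each fixed Fourier index $k$, this lets me interchange the double sum over $(k', k'')$ with the integral over the period $[0,1]$ — legitimate because for fixed $k$ the summand is bounded by $\|R_b(k)\|_F \|H_{k'}\|_F \|H_{k''}\|_F$, which is summable over $(k', k'')$. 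The period integral then collapses to a Kronecker delta, $\int_0^1 e^{2\pi i (k' - k'' - k) f}\, df = \mathds{1}_{k = k' - k''}$, so the $k$-th term becomes $\sum_{k', k'' : \, k' - k'' = k} \mathrm{Tr}[R_b(k) H_{k''}^{\top} H_{k'}]$. Summing over all $k$ and regrouping the pairs $(k', k'')$ according to $k = k' - k''$ turns the right-hand side into $\sum_{k'} \sum_{k''} \mathrm{Tr}[R_b(k' - k'') H_{k''}^{\top} H_{k'}]$, and the cyclic invariance of the trace, $\mathrm{Tr}[R_b(k'-k'') H_{k''}^{\top} H_{k'}] = \mathrm{Tr}[H_{k'} R_b(k'-k'') H_{k''}^{\top}]$, identifies this with $\mathrm{Tr}[R_x(0)]$, completing the argument.

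The main obstacle is that $R_b$ is not assumed absolutely summable, so one cannot interchange the outer sum over $k$ with the integral over $f$ directly — doing so would demand $\sum_k \|R_b(k)\|_F < +\infty$, which fails. The fix is precisely the order of operations above: the interchange of summation and integration is performed only at the level of each fixed $k$, where the single scalar factor $\|R_b(k)\|_F$ is harmless and the double series over $(k', k'')$ is absolutely convergent through the budget $\big(\sum_k \|H_k\|_F\big)^2 < +\infty$. The recombination into the single double series over the impulse-response indices is then justified \emph{after} integration by the uniform bound $\|R_b(k'-k'')\|_F \leq \sup_k \|R_b(k)\|_F \leq \mathrm{Tr}[R_b(0)] < +\infty$ established in \autoref{thm:LTI_PSD_2}; this is the same estimate that makes \eqref{eq:autocorrel_x_d_2} finite. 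In short, the absolute summability of $(H_k)_k$ carries the entire argument, and only boundedness of $R_b$ — not its summability — is ever needed.
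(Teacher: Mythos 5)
Your proof is correct, and it takes a genuinely more elementary route than the paper's. The paper derives this lemma from its distributional framework: it views $S_x$ as an element of the dual of $C^{\infty}_{\mathrm{per},(d \times d)}([0,1])$ and pairs it with the constant test function $\phi \equiv \mathrm{Id}$; evaluated via the defining formula for the power spectral density, the pairing picks out the zeroth Fourier coefficient $\mathrm{Tr}[R_x(0)] = \mathbb{E}[\|x_k\|^2]$, while evaluated via formula \eqref{eq:S_x_tempered} of \autoref{thm:LTI_PSD_2} it yields the sum--integral expression, and trace cyclicity closes the loop. You instead bypass the distributional layer entirely: you take from \autoref{thm:LTI_PSD_2} only the wide-sense stationarity of $x$ and the absolutely convergent double series \eqref{eq:autocorrel_x_d} for $R_x(0)$, expand the transfer functions into their uniformly convergent Fourier series, collapse the period integral to a Kronecker delta at each fixed $k$, and regroup the resulting double series over $(k',k'')$ along the fibers $k' - k'' = k$. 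Your treatment of the one analytic pressure point is exactly right: the sum--integral interchange is performed only at fixed $k$, where the lone factor $\|R_b(k)\|_F$ is harmless, and the final regrouping requires only the uniform bound $\sup_k \|R_b(k)\|_F \leq \mathrm{Tr}[R_b(0)]$ together with $\big(\sum_k \|H_k\|_F\big)^2 < +\infty$ --- the same budget as \eqref{eq:autocorrel_x_d_2} --- so absolute summability of $R_b$ is never invoked, which is precisely what the lemma must avoid. What the paper's route buys is uniformity with its framework: the lemma drops out of the general distributional identity by a one-line choice of test function, and the same device would handle any smooth periodic $\phi$. What your route buys is self-containedness: a reader needs no tempered distributions, only Fubini for absolutely convergent series and the orthogonality relation $\int_0^1 e^{2\pi i m f}\,df = \mathds{1}_{m=0}$.
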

\begin{proof}
We view $S_x$ as an element of the dual space of $C_{\mathrm{per},(d \times d)}^{\infty}([0,1])$ (see \autoref{app:general_def}). If we set $\phi \equiv \mathrm{Id} \in C_{\mathrm{per},(d \times d)}^{\infty}([0,1])$, we obtain that by the definition of the power spectral density $S_x$,
\begin{align}
    \langle S_x, \mathrm{Id} \rangle &= \sum_{k=-\infty}^{\infty} \left\langle R_x(k), \mathrm{Id} \int_{\R} e^{-2\pi i k f} \, df \right\rangle_{F} = \sum_{k=-\infty}^{\infty} \mathrm{Tr}[R_x(k)] \int_{0}^1 e^{-2\pi i k f} \, df \\ &= \sum_{k=-\infty}^{\infty} \mathds{1}_{k = 0} \mathrm{Tr}[R_x(k)] = \mathrm{Tr}[R_x(0)] = \mathbb{E}[\mathrm{Tr}[x_k x_k^{\top}]] = \mathbb{E}[\|x_{k}\|^2]. 
\end{align}
On the other hand, by \autoref{thm:LTI_PSD_2},
\begin{align}
    \langle S_x, \mathrm{Id} \rangle &= \sum_{k=-\infty}^{\infty} \int_{0}^{1} \langle H(e^{-2\pi i f}) R_b(k) H(e^{2\pi i f})^{\top}, \mathrm{Id} \rangle_{F} \, e^{-2\pi i k f} \, df \\ &= \sum_{k=-\infty}^{\infty} \int_{0}^{1} \mathrm{Tr}[H(e^{-2\pi i f}) R_b(k) H(e^{2\pi i f})^{\top}] \, e^{-2\pi i k f} \, df \\ &= \sum_{k=-\infty}^{\infty} \int_{0}^{1} \mathrm{Tr}[R_b(k) H(e^{2\pi i f})^{\top} H(e^{-2\pi i f})] \, e^{-2\pi i k f} \, df
\end{align}
\end{proof}

\section{Consistency of the zero-th order noise model} \label{app:validity}

In \autoref{sec:intro} and \autoref{sec:with_replacement} we define the zero-th order approximation $\widehat{\nabla f}_i(x) = \nabla f_i(x^{\star}) + A(x - x^{\star})$, where $x^{\star} = - A^{-1} b$ is the minimizer of $f$, which gives rise to the approximate dynamics $x_{k+1} = x_{k} + \alpha(x_{k} - x_{k-1}) - \gamma (A x_{k} - A_i A^{-1} b + b_i + b)$ (for SGDM). A first question that arises is whether the approximation around $x^{\star}$ is consistent: is the expectation $\mathbb{E}[x_k]$ equal to $x^{\star}$? If that were not the case, our model would be ill-posed because we cannot expect $\widehat{\nabla f}_i(x) \approx \nabla f_i(x)$ to hold unless $x \approx x^{\star}$. A second question is whether the algorithmic choices that we make for $x^{\star}$ and the matrix $A$ are warranted, or if alternative choices are possible. 

We answer both questions at once. If we set $x_{k+1} = x_{k} + \alpha(x_{k} - x_{k-1}) - \gamma (B (x_{k} - x^{\star}) + A_{i_k} x^{\star} + b_{i_k})$ for arbitrary $x^{\star} \in \R^d$ and $B \in \R^{d \times d}$, we have that
\begin{align}
    &\mathbb{E}[x_{k+1}] = \mathbb{E}[x_{k}] + \alpha(\mathbb{E}[x_{k}] - \mathbb{E}[x_{k-1}]) - \gamma (B (\mathbb{E}[x_{k}] - x^{\star}) + \mathbb{E}[A_{i_k}] x^{\star} + \mathbb{E}[b_{i_k}]) \\
    &\implies 0 = B (\mathbb{E}[x_{k}] - x^{\star}) + A x^{\star} + b \implies \mathbb{E}[x_{k}] = x^{\star} -B^{-1} (A x^{\star} + b).
\end{align}
Imposing that $\mathbb{E}[x_{k}] = x^{\star}$ for consistency, we obtain that $B^{-1} (A x^{\star} + b) = 0$ and hence $x^{\star}$ is forced to take value $- A^{-1} b$. However, note that this argument does not constrain the matrix $B$. Hence, a choice of $B$ different from $A$ would also produce a consistent model in this sense.

Beyond imposing $\mathbb{E}[x_k] = x^{\star}$, for our model to be consistent we also need that the iterates $x_k$ are actually close to $x^{\star}$ when the stepsize $\gamma$ is small.
We can shed light onto this issue by looking at the analysis of the Robbins-Monro method. Consider the optimization problem $\min _{x \in \mathcal{X}} \mathbb{E} [Q(x,Y)]$,
where $g(x)=\mathbb{E}[Q(x,Y)]$ is differentiable and convex, and the expectation is with respect to the random variable $Y$. This problem is equivalent to finding the root $x^{\star}$ of 
$\nabla g(x)=0$. Let $(Y_k)_{k\geq 0}$ be a sequence of independent random variables with the same distribution as $y$. The Robbins-Monro method generates a sequence $(x_k)_{k \geq 0}$ as $x_{k+1} = x_{k} - \gamma_k H(x_k,Y_k)$, where $\mathbb{E}[H(x,Y)] = \nabla g(x)$ and $(\gamma_k)_{k \geq 0}$ is a non-negative sequence of stepsizes.  
\begin{lemma}[\cite{bouleau1994numerical}] \label{lem:robbins_monro}
Suppose that (i) $\sum_{k = 0}^{\infty} \gamma_k = +\infty$, (ii) $\sum_{k = 0}^{\infty} \gamma_k^2 < +\infty$, (iii) for any $x$, there exists $B > 0$ such that $\|H(x,Y)\| \leq B$ almost surely, and (iv) $g$ is strictly convex, i.e. for any $0 < \delta < 1$, $\inf_{\delta < \|x - x_{*}\| < 1/\delta} \langle x - x^{\star}, \nabla g(x) \rangle > 0$. Then, $x_k$ converges to $x^{\star}$ almost surely.
\end{lemma}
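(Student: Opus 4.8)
The plan is to run the classical stochastic-approximation argument built on the Lyapunov function $V_k = \|x_k - x^\star\|^2$ together with the Robbins--Siegmund almost-sure supermartingale convergence theorem. First I would fix the filtration $(\mathcal{F}_k)_{k \geq 0}$ with $\mathcal{F}_k = \sigma(x_0, Y_0, \dots, Y_{k-1})$, so that $x_k$ is $\mathcal{F}_k$-measurable while $Y_k$ is independent of $\mathcal{F}_k$; the unbiasedness hypothesis then reads $\mathbb{E}[H(x_k, Y_k) \mid \mathcal{F}_k] = \nabla g(x_k)$. Expanding the update gives
\begin{align}
    V_{k+1} = V_k - 2\gamma_k \langle x_k - x^\star, H(x_k, Y_k) \rangle + \gamma_k^2 \|H(x_k, Y_k)\|^2,
\end{align}
and taking $\mathbb{E}[\,\cdot \mid \mathcal{F}_k]$ while using condition (iii) to control the last term yields the drift inequality
\begin{align}
    \mathbb{E}[V_{k+1} \mid \mathcal{F}_k] \leq V_k - 2\gamma_k \langle x_k - x^\star, \nabla g(x_k) \rangle + \gamma_k^2 B^2.
\end{align}

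By convexity of $g$ and $\nabla g(x^\star) = 0$, monotonicity of the gradient gives $\langle x_k - x^\star, \nabla g(x_k)\rangle \geq 0$, so the middle term is a nonnegative dissipation and the last term is summable by (ii). I would then invoke Robbins--Siegmund with $a_k = 0$, $b_k = \gamma_k^2 B^2$ and $c_k = 2\gamma_k \langle x_k - x^\star, \nabla g(x_k)\rangle$ to conclude two facts at once: (a) $V_k$ converges almost surely to a finite limit $V_\infty \geq 0$, and (b) $\sum_{k} \gamma_k \langle x_k - x^\star, \nabla g(x_k)\rangle < +\infty$ almost surely.

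The final step rules out $V_\infty > 0$, working path by path on the event $\{V_\infty > 0\}$. On such a path $\|x_k - x^\star\| \to \sqrt{V_\infty} \in (0,\infty)$, so for a suitably small $\delta$ (depending on the path) the iterates eventually lie in the annulus $\{\delta < \|x - x^\star\| < 1/\delta\}$; strict convexity (iv) then forces $\langle x_k - x^\star, \nabla g(x_k)\rangle \geq c_\delta > 0$ for all large $k$, where $c_\delta$ is the infimum in (iv). Hence $\sum_k \gamma_k \langle x_k - x^\star, \nabla g(x_k)\rangle \geq c_\delta \sum_k \gamma_k = +\infty$ by (i), contradicting (b). Therefore $V_\infty = 0$ almost surely, i.e. $x_k \to x^\star$.

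I expect the main technical obstacle to be the rigorous control of the second-moment term $\mathbb{E}[\|H(x_k, Y_k)\|^2 \mid \mathcal{F}_k]$: condition (iii) supplies an almost-sure bound only for each fixed $x$, with $B$ a priori depending on $x$, so substituting the random $x_k$ risks a circular dependence between the bound and the very trajectory one is trying to show is bounded. I would resolve this either by a localization argument — stopping the process when it first leaves a ball of radius $M$ about $x^\star$, proving convergence of the stopped process where the bound is uniform, and letting $M \to \infty$ — or, if a growth bound $\mathbb{E}[\|H(x,Y)\|^2] \leq K(1 + \|x - x^\star\|^2)$ is available, by absorbing the extra $V_k$ contribution into the $V_k(1+a_k)$ slot of Robbins--Siegmund with $a_k = 2K\gamma_k^2$, still summable by (ii). Once the supermartingale inequality is correctly in place, the remaining pieces (unbiasedness, gradient monotonicity, and the annulus contradiction) are routine.
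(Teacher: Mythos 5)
Your proof is correct, and note that the paper never proves this lemma itself: it is imported verbatim from \cite{bouleau1994numerical}, whose classical argument is essentially the one you give --- a supermartingale built from $\|x_k - x^{\star}\|^2$ (equivalently, Robbins--Siegmund with $a_k = 0$, $b_k = \gamma_k^2 B^2$, $c_k = 2\gamma_k \langle x_k - x^{\star}, \nabla g(x_k)\rangle$), followed by the pathwise annulus contradiction combining (i) and (iv) to rule out a positive limit. You also correctly identified the one real subtlety in the statement as transcribed, namely that in (iii) the bound $B$ is quantified after $x$ while the drift inequality needs it uniform along the trajectory; your localization or linear-growth fix is the standard repair, and in the paper's actual application $H(x,Y) = B(x - x^{\star}) + A_Y x^{\star} + b_Y$ is affine in $x$ over finitely many $Y$-values, so the required uniform-on-balls bound holds automatically.
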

Let $B$ be an arbitrary symmetric positive definite matrix. We apply \autoref{lem:robbins_monro} to the algorithm $x_{k+1} = x_{k} - \gamma_k (B (x_{k} - x^{\star}) + A_{i_k} x^{\star} + b_{i_k})$, i.e. we set $Y_k = i_k$ and $H(x,Y) = B (x - x^{\star}) + A_{Y} x^{\star} + b_{Y}$. Then, $\mathbb{E}[H(x,Y)] = B (x - x^{\star}) + A x^{\star} + b$, which can be regarded as the gradient of $g(x) = \frac{1}{2} \langle x, B x \rangle + \langle - B x^{\star} + A x^{\star} + b, x \rangle$. Under the conditions (i)-(iv), we obtain that $x_{k}$ converges almost surely to the minimizer of $g$, which is $x^{\star} -B^{-1} (A x^{\star} + b)$, which is equal to $-A^{-1} b$ under the condition $x^{\star} = - A^{-1} b$ as explained before. Hence, our model is consistent in the sense that when stepsizes converge to zero, the iterates converge almost surely to the minimizer.

\subsection{Difference between the iterates under zero-th order noise and standard noise} \label{subsec:difference}

To further study the consistency of the zero-th order noise model, we take a look at the difference between the iterates under the original noise and the iterates under the zero-th order noise. 
We will consider the case of SGDM for simplicity; while the arguments for SNAG are analogous and the arguments for SGD ($\alpha = 0$) are simpler.
Suppose that $(x_k)_k$ is the sequence of iterates under the original noise, i.e. 
$x_{k+1} = (1+\alpha) x_k - \alpha x_{k-1} - \gamma \nabla f_i(x_k)$. 
Suppose that $(x_k^0)_k$ is the sequence of iterates under the zero-th order noise model described in \autoref{sec:with_replacement}, 
i.e. 
$x_{k+1}^0 = x_k^0 + \alpha(x_k^0 - x_{k-1}^0) - \gamma \widehat{\nabla f}_{i_k}(x_k^0)$,
where $\widehat{\nabla f}_{i}(x) = A x_k + b - A_{i} A^{-1} b + b_{i}$. Remark that
\begin{align} \label{eq:widehat_f_i_f_i}
    \widehat{\nabla f_i}(x) &= A x - A_i A^{-1} b + b_i + b = A_i x + b_i -(A_i - A)(x + A^{-1}b) \\ &= \nabla f_i(x) - (A_i - A)(x + A^{-1}b) 
\end{align}
Let us define the sequence $(x^1_k)_k$ by setting $x_k = x^0_k + x^1_k$. Using equation \eqref{eq:widehat_f_i_f_i}, we have that:
\begin{align}
    &x^0_{k+1} + x^1_{k+1} \\ &= (1+\alpha)(x^0_k + x^1_k) - \alpha(x^0_{k-1} + x^1_{k-1}) - \gamma(\widehat{\nabla f}_{i_k}(x^0_k + x^1_k) + (A_{i_k} - A)(x^0_k + x^1_k + A^{-1}b)) \\ &= (1+\alpha)(x^0_k + x^1_k) - \alpha(x^0_{k-1} + x^1_{k-1})  - \gamma(\widehat{\nabla f}_{i_k}(x^0_k) + A x^1_k + (A_{i_k} - A)(x^0_k + x^1_k + A^{-1}b))
\end{align}
Hence, 
$x_{k+1}^0 = (1+\alpha) x_k^0 - \alpha x_{k-1}^0 - \gamma \widehat{\nabla f}_{i_k}(x_k^0)$ implies that
\begin{align} \label{eq:discrepancy}
    x^1_{k+1} &= (1+\alpha) x^1_k - \alpha x^1_{k-1} - \gamma(A x^1_k + (A_{i_k} - A)(x^0_k + x^1_k + A^{-1}b)) \\ &= (1+\alpha) x^1_k - \alpha x^1_{k-1} - \gamma(A_{i_k} x^1_k + (A_{i_k} - A)(x^0_k + A^{-1}b)) 
\end{align}
We want to find an expression for the sequence $(x_k^0)_k$. We define the sequence $(\tilde{x}_k^0)_k \subseteq \R^{2d}$ as $\tilde{x}_0^0 = [(x_0^0)^{\top}, (x_0^0)^{\top}]^{\top}$ and for $k \geq 1$, $\tilde{x}_k^0 = [(x_k^0)^{\top}, (x_{k-1}^0)^{\top}]^{\top}$, and the matrix $B \in \R^{2d \times 2d}$ as 
\begin{align}
    B = 
    \begin{bmatrix}
    (1+\alpha) \mathrm{Id} -\gamma A & - \alpha \mathrm{Id} \\ \mathrm{Id} & 0
    \end{bmatrix}.
\end{align}
Note that the eigenvalues and eigenvectors of $B$ are of the form:
\begin{align}
    \begin{bmatrix}
    (1+\alpha) \mathrm{Id} -\gamma A & - \alpha \mathrm{Id} \\ \mathrm{Id} & 0
    \end{bmatrix} 
    \begin{bmatrix}
    v \\ w
    \end{bmatrix} = \lambda \begin{bmatrix}
    v \\ w
    \end{bmatrix} \iff 
    \begin{cases}
    (1+\alpha) v - \gamma A v - \alpha w = \lambda v, \\ v = \lambda w
    \end{cases}
\end{align}
Thus, if $v$ is an eigenvector of $A$ with eigenvalue $\lambda_0$, we have that $[v^{\top}, v^{\top}/\lambda]$ is an eigenvector of $B$ with eigenvalue $\lambda$ satisfying:
\begin{align}
    &1+\alpha - \gamma \lambda_0 - \frac{\alpha}{\lambda} = \lambda \implies \lambda^2 - (1+\alpha - \gamma \lambda_0) \lambda + \alpha = 0 \\ &\implies \lambda_{\pm} = \frac{1+\alpha - \gamma \lambda_0 \pm \sqrt{(1+\alpha - \gamma \lambda_0)^2 - 4 \alpha}}{2}.
\end{align}
Note that in the regime $\gamma \lambda_0 \ll 1-\alpha$ for all eigenvalues $\lambda_0$ of $A$, we have 
\begin{align}
&\sqrt{(1+\alpha - \gamma \lambda_0)^2 - 4 \alpha} = \sqrt{(1+\alpha)^2 - 2 \gamma \lambda_0 (1+\alpha) + \gamma^2 \lambda_0^2 - 4 \alpha} \\ &= \sqrt{(1-\alpha)^2 - 2 \gamma \lambda_0 (1+\alpha) + \gamma^2 \lambda_0^2} = (1-\alpha)\sqrt{1 + \frac{\gamma^2 \lambda_0^2 - 2 \gamma \lambda_0 (1+\alpha)}{(1-\alpha)^2}} \\ &= (1-\alpha)\bigg(1 + \frac{\gamma^2 \lambda_0^2 - 2 \gamma \lambda_0 (1+\alpha)}{2(1-\alpha)^2} + O\bigg( \bigg(\frac{\gamma^2 \lambda_0^2 - 2 \gamma \lambda_0 (1+\alpha)}{(1-\alpha)^2} \bigg)^2 \bigg) \bigg) \\ &= 1-\alpha - \frac{\gamma \lambda_0(1+\alpha)}{(1-\alpha)^2} + O\bigg( \left(\frac{\gamma \lambda_0}{1-\alpha} \right)^2 \bigg).
\end{align}
And this means that 
\begin{align}
    \lambda_{\pm} = 
    \begin{cases}
    1 - \frac{\gamma \lambda_0(1+\alpha)}{2(1-\alpha)^2} + O\left( \left(\frac{\gamma \lambda_0}{1-\alpha} \right)^2 \right) \\ 
    \alpha + \frac{\gamma \lambda_0(1+\alpha)}{2(1-\alpha)^2} + O\left( \left(\frac{\gamma \lambda_0}{1-\alpha} \right)^2 \right)
    \end{cases}
\end{align}
Thus, when $\gamma \lambda_0 \ll 1-\alpha$ and $\frac{\gamma \lambda_0(1+\alpha)}{2(1-\alpha)^2} \leq 1-\alpha$, which holds if $\gamma \lambda_0 \leq (1-\alpha)^3$, we have that $|\lambda_{\pm}| < 1$. Since we obtain a pair $\lambda_{\pm}$ for each of the $d$ eigenvalues $\lambda_0$ of $A$, the $2d$ eigenvalues of $B$ are of this form.

We also define the vector $\tilde{b} \in \R^{2d}$ and the sequence $(\tilde{y}_{i_k})_{k}$ as
\begin{align}
    \tilde{b} = 
    \begin{bmatrix}
    b \\ 0
    \end{bmatrix} \quad 
    \tilde{y}_{i_k} = \begin{bmatrix}
    y_{i_k} \\ 0
    \end{bmatrix}.
\end{align}
Let $[v^{\top},w^{\top}]^{\top} = (\mathrm{Id} - B)^{-1} \tilde{b}$. Then,
\begin{align}
    \begin{bmatrix}
    -\alpha \mathrm{Id} + \gamma A & \alpha \mathrm{Id} \\ -\mathrm{Id} & \mathrm{Id}
    \end{bmatrix} 
    \begin{bmatrix}
    v \\ w
    \end{bmatrix} = \begin{bmatrix}
    b \\ 0
    \end{bmatrix}
\end{align}
And this implies that $v = w$, which means that $(-\alpha \mathrm{Id} + \gamma A)v + \alpha v = b \implies v = \frac{1}{\gamma} A^{-1} b \implies (\mathrm{Id} - B)^{-1} b = \frac{1}{\gamma} [(A^{-1} b)^{\top}, (A^{-1} b)^{\top}]^{\top}$. Thus, we get
\begin{align}
    x^0_k = [\tilde{x}^0_k]_{1:d} = - \gamma \bigg[\sum_{\kappa=0}^{+\infty} B^{\kappa} y_{i_{k-\kappa}} \bigg]_{1:d} - A^{-1} b,
\end{align}
and equation \eqref{eq:discrepancy} becomes
\begin{align}
    x^1_{k+1} = (1+\alpha) x^1_k - \alpha x^1_{k-1} - \gamma A_{i_k} x^1_k + \gamma^2 (A_{i_k} - A) \bigg[\sum_{\kappa=0}^{+\infty} B^{\kappa} y_{i_{k-\kappa}} \bigg]_{1:d}.
\end{align}
\paragraph{Expectation of the difference for gradient estimates with replacement.} When the indices $i_k$ are chosen i.i.d. for each iteration $k$, we can write
\begin{align}
    \mathbb{E}[x^1_{k+1}] &= (1+\alpha) \mathbb{E}[x^1_k] - \alpha \mathbb{E}[x^1_{k-1}] - \gamma \mathbb{E}[A_{i_k}] \mathbb{E}[x^1_k] \\ &+ \gamma^2 (\mathbb{E}[A_{i_k} y_{i_{k}}] - A \mathbb{E}[y_{i_{k}}]) + \gamma^2 \mathbb{E}[A_{i_k} - A] \bigg[\sum_{\kappa=1}^{+\infty} B^{\kappa} \mathbb{E}[y_{i_{k-\kappa}}] \bigg]_{1:d},
\end{align}
which implies that
\begin{align}
    &\mathbb{E}[x^1_k] = (1+\alpha) \mathbb{E}[x^1_k] - \alpha \mathbb{E}[x^1_{k}] - \gamma A \mathbb{E}[x^1_k] + \gamma^2 \mathbb{E}[A_{i} y_{i}] \implies 0 = - A \mathbb{E}[x^1_k] + \gamma \mathbb{E}[A_{i} y_{i}] \\ &\implies \mathbb{E}[x^1_k] = \gamma A^{-1} \mathbb{E}[A_{i} y_{i}]. 
\end{align}
That is, when functions are chosen with replacement, the difference between the expectation $\mathbb{E}[x^0_k]$ of the iterates under the zero-th order noise model and the expectation $\mathbb{E}[x^0_k + x^1_k]$ of the iterates under the standard stochastic noise is of order $\Theta(\gamma)$. While $x^0_k$ is unbiased in the sense that $\mathbb{E}[x^0_k] = x^{\star}$, $x^1_k$ is biased. Still $\|\mathbb{E}[x^1_k]\|^2 = \Theta(\gamma^2)$ is of lower order than the variance of $x^0_k$ given by the approximation \eqref{eq:replacement_limit}, which is $\Theta(\gamma)$, which is why in \autoref{sec:experiments} and \autoref{sec:more_experiments} we observe the same values for both types of noise. A complete analysis would require a study of the variance of $x^0_k + x^1_k$ by looking at higher-order noise models, which is left for future work.  

We also leave for future work the computation of the difference of expectations $\mathbb{E}[x^1_k]$ for RR and SO, and the analysis of the variance of $x^0_k + x^1_k$ through higher-order noise models. This would help explain the discrepancies between errors found in \autoref{sec:more_experiments} when the values $u_i^{\top} \Sigma u_i$ are highly unequal.

\section{Proofs of \autoref{sec:with_replacement}} \label{app:proofs_replacement}

\begin{lemma} \label{lem:continuously_differentiable}
Suppose that $f : \R \to \R$ with period 1 is continuously differentiable. Then, the sequence of its Fourier coefficents ${(s_k)}_{k \in \mathbb{Z}}$ is absolutely summable: $\sum_{k=-\infty}^{\infty} |s_k| < +\infty$.
\end{lemma}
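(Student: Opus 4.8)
The plan is to reduce the absolute summability of the Fourier coefficients $(s_k)_k$ of $f$ to the square-summability of the Fourier coefficients of its derivative $f'$, via one integration by parts, and then close the argument with the Cauchy--Schwarz inequality. Writing $s_k = \int_0^1 f(x) e^{-2\pi i k x}\, dx$, the first step is to integrate by parts for $k \neq 0$. Since $f$ is continuously differentiable, this yields
\begin{align}
    s_k = \left[ -\frac{1}{2\pi i k} f(x) e^{-2\pi i k x} \right]_0^1 + \frac{1}{2\pi i k} \int_0^1 f'(x) e^{-2\pi i k x} \, dx.
\end{align}
The boundary term vanishes because $f$ has period $1$ (so $f(0) = f(1)$) and $e^{-2\pi i k \cdot 1} = e^{-2\pi i k \cdot 0} = 1$, giving $s_k = \frac{1}{2\pi i k} s'_k$, where $s'_k := \int_0^1 f'(x) e^{-2\pi i k x}\, dx$ is the $k$-th Fourier coefficient of $f'$.

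The second step is to control $(s'_k)_k$. Since $f'$ is continuous on the compact period $[0,1]$ it is bounded, hence square-integrable, so Bessel's inequality (or Parseval's identity) gives $\sum_{k=-\infty}^{\infty} |s'_k|^2 \leq \int_0^1 |f'(x)|^2 \, dx < +\infty$. The third step is to apply Cauchy--Schwarz to the factorization $|s_k| = \frac{1}{2\pi |k|} |s'_k|$ over the indices $k \neq 0$:
\begin{align}
    \sum_{k \neq 0} |s_k| = \frac{1}{2\pi} \sum_{k \neq 0} \frac{|s'_k|}{|k|} \leq \frac{1}{2\pi} \bigg( \sum_{k \neq 0} \frac{1}{k^2} \bigg)^{1/2} \bigg( \sum_{k \neq 0} |s'_k|^2 \bigg)^{1/2}.
\end{align}
Both factors on the right are finite: the first equals $\frac{\pi}{\sqrt{3}}$ since $\sum_{k \neq 0} k^{-2} = \frac{\pi^2}{3}$, and the second is finite by the previous step. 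Adding the single remaining term $|s_0| \leq \int_0^1 |f(x)| \, dx < +\infty$ then yields $\sum_{k=-\infty}^{\infty} |s_k| < +\infty$, as claimed.

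The computations here are routine, and I do not anticipate a genuine obstacle. The only points that warrant care are the vanishing of the boundary term, which relies on periodicity together with continuity of $f$ to justify $f(0) = f(1)$, and the invocation of Bessel's inequality for $f'$, for which continuity of $f'$ on the compact period is precisely what guarantees the square-integrability needed to make $\sum_k |s'_k|^2$ finite. Everything else is a direct application of Cauchy--Schwarz and the convergence of $\sum_{k \neq 0} k^{-2}$.
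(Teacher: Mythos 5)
Your proof is correct and follows essentially the same route as the paper's: relate $s_k$ to the Fourier coefficients of the derivative, use Parseval (Bessel) on the continuous derivative to get square-summability, and conclude by Cauchy--Schwarz against $\sum_{k\neq 0} k^{-2}$. If anything, your version is more careful than the paper's, which mistakenly writes $f''$ where the hypothesis only supplies $f'$, omits the square roots in the Cauchy--Schwarz display, and glosses over the $k=0$ term that you handle separately.
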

\begin{proof}
This is a standard result. The Fourier coefficients of the second derivative $f''$ are ${(2\pi i k s_k)}_{k \in \mathbb{Z}}$. Since $f''$ is continuous and 1-periodic, it belongs to $L^2([0,1])$, which by Parseval's theorem implies that $\sum_{k=-\infty}^{\infty} k^2 s_k^2 < +\infty$. By the Cauchy-Schwarz inequality,
\begin{align}
    \sum_{k=-\infty}^{\infty} |s_k| = \sum_{k=-\infty}^{\infty} k |s_k| \cdot \frac{1}{k} \leq \left(\sum_{k=-\infty}^{\infty} k^2 s_k^2 \right) \left(\sum_{k=-\infty}^{\infty} \frac{1}{k^2} \right) < +\infty.
\end{align}
\end{proof}

\begin{lemma} \label{lem:eigenvalues_not_zero}
If $(1+\alpha)\mathrm{Id} - \gamma A \succeq 0$, the matrix $z^2 \mathrm{Id} - z((1+\alpha)\mathrm{Id} - \gamma A) + \alpha \mathrm{Id}$ has full rank when $z$ has modulus 1, which shows that the function $f \to H(e^{2\pi i f})$ for SGDM is continuous (and continuously differentiable). If $\mathrm{Id} - \gamma A \succeq 0$, the same statement holds for $z^2 \mathrm{Id} - z (1 + \alpha) (\mathrm{Id} - \gamma A) + \alpha (\mathrm{Id} - \gamma A)$, and it shows that the function $f \to H(e^{2\pi i f})$ for SNAG is continuous (and continuously differentiable).
\end{lemma}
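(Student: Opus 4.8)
The plan is to diagonalize in the eigenbasis of $A$, reduce the full-rank question to a statement about the roots of a family of scalar quadratics, and rule out roots on the unit circle using Vieta's formulas together with the semidefiniteness hypothesis. Finally I would deduce $C^1$ (in fact $C^\infty$) regularity of $f \mapsto H(e^{2\pi i f})$ from the non-vanishing of a determinant via Cramer's rule.

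First, since $A = U \Lambda U^\top$ is symmetric with eigenvalues $\lambda_1, \dots, \lambda_d > 0$, and every matrix appearing in $M(z) := z^2 \mathrm{Id} - z((1+\alpha)\mathrm{Id} - \gamma A) + \alpha \mathrm{Id}$ is a polynomial in $A$, the matrix $M(z)$ is diagonalized by $U$ with diagonal entries $p_i(z) = z^2 - (1+\alpha - \gamma \lambda_i) z + \alpha$. Hence $\det M(z) = \prod_{i=1}^d p_i(z)$, and $M(z)$ has full rank if and only if none of the $p_i$ vanishes; so it suffices to show that each $p_i$ has no root $z$ with $|z| = 1$.

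For a fixed $i$, write $\beta = 1 + \alpha - \gamma \lambda_i$; the hypothesis $(1+\alpha)\mathrm{Id} - \gamma A \succeq 0$ gives $\beta \ge 0$, i.e. $\gamma \lambda_i \le 1 + \alpha$. By Vieta's formulas the two roots of the monic quadratic $p_i$ multiply to $\alpha \in [0,1)$. If $p_i$ had a non-real root $z$ on the unit circle, then, since $p_i$ has real coefficients, its conjugate $\bar z \ne z$ would be the other root, and the product of the two roots would be $z \bar z = |z|^2 = 1 \ne \alpha$, a contradiction. The only remaining candidates are the real points $z = \pm 1$, which I check directly: $p_i(1) = \gamma \lambda_i > 0$ and $p_i(-1) = 2 + 2\alpha - \gamma \lambda_i \ge 1 + \alpha > 0$, the last inequality using exactly the semidefiniteness hypothesis $\gamma \lambda_i \le 1 + \alpha$. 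Thus $p_i$ never vanishes on $|z| = 1$. The SNAG case is identical after setting $\mu_i = 1 - \gamma \lambda_i \in [0,1)$ (using $\mathrm{Id} - \gamma A \succeq 0$): the scalar factors become $q_i(z) = z^2 - (1+\alpha)\mu_i z + \alpha \mu_i$, whose roots multiply to $\alpha \mu_i < 1$, and which satisfy $q_i(1) = 1 - \mu_i = \gamma \lambda_i > 0$ and $q_i(-1) = 1 + (1+2\alpha)\mu_i \ge 1 > 0$.

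It remains to pass from invertibility on the circle to smoothness. Since $z \mapsto M(z)$ is a matrix polynomial, $f \mapsto M(e^{2\pi i f})$ is $C^\infty$, and by Cramer's rule $M(z)^{-1} = \mathrm{adj}(M(z)) / \det M(z)$ is a smooth function of $z$ wherever $\det M(z) \ne 0$. Having shown $\det M(e^{2\pi i f}) \ne 0$ for every real $f$, I conclude that $f \mapsto M(e^{2\pi i f})^{-1}$, and therefore $f \mapsto H(e^{2\pi i f}) = -\gamma e^{2\pi i f} M(e^{2\pi i f})^{-1}$, is $C^\infty$, in particular continuously differentiable, as claimed. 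The main subtlety, and the only place the hypotheses genuinely enter, is the evaluation at $z = -1$: it is precisely the semidefiniteness condition that keeps $p_i(-1)$ (resp. $q_i(-1)$) bounded away from zero, whereas the conjugate-pair argument and the $z = 1$ evaluation are automatic from $\alpha < 1$ and $\gamma \lambda_i > 0$.
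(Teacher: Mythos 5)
Your proof is correct, and the core argument differs from the paper's. Both proofs begin identically, diagonalizing in the eigenbasis of $A$ and reducing to the scalar quadratics $p_i(z) = z^2 - (1+\alpha-\gamma\lambda_i)z + \alpha$ (resp.\ $q_i$ for SNAG). From there the paper solves each quadratic explicitly, splits on the sign of the discriminant $(1+\alpha-\gamma\lambda_j)^2 - 4\alpha$, and bounds $|z_{\pm}| < 1$ in each case, thereby proving the stronger fact that \emph{all} roots lie strictly inside the open unit disk. You instead exclude roots on the circle directly: a non-real root of modulus 1 would force the product of the conjugate pair to equal $1 \neq \alpha$ (resp.\ $\alpha\mu_i$) by Vieta, and the only real candidates $z = \pm 1$ are killed by the endpoint evaluations $p_i(1) = \gamma\lambda_i > 0$ and $p_i(-1) = 2+2\alpha-\gamma\lambda_i \geq 1+\alpha$. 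Your route is shorter, avoids the discriminant case analysis entirely, and is exactly calibrated to what the lemma (and its downstream use in \autoref{lem:continuously_differentiable}) requires, namely non-vanishing of $\det M(e^{2\pi i f})$ on the circle; it also handles the boundary cases $\gamma\lambda_i = 1+\alpha$ (SGDM) and $\gamma\lambda_i = 1$ (SNAG) cleanly, where the paper's inference of the strict inequality $1+\alpha-\gamma\lambda_j > 0$ from $(1+\alpha)\mathrm{Id} - \gamma A \succeq 0$ is marginally imprecise. What the paper's computation buys in exchange is the Schur-stability statement $|z_{\pm}| < 1$, which your argument does not give (a priori a real root could lie outside the closed disk), together with explicit root formulas of the same shape reused later (e.g.\ in \autoref{app:validity}); your proposal, conversely, makes the final regularity step more explicit than the paper does, via Cramer's rule applied to $H(z) = -\gamma z\, M(z)^{-1}$.
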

\begin{proof}
Since $A$ is symmetric and strictly positive definite, we know by the spectral theorem that it diagonalizes and that it has positive eigenvalues $(\lambda_j)_{j=1}^{d}$. The matrix $z^2 \mathrm{Id} - ((1+\alpha)\mathrm{Id} - \gamma A) z + \alpha \mathrm{Id}$ diagonalizes in the same basis and has eigenvalues $z^2 - (1+\alpha - \gamma \lambda_j) z + \alpha$. If it did not have full rank, we would have, for some $j \in \{1, \dots, d\}$,
\begin{align}
    z^2 - (1+\alpha - \gamma \lambda_j) z + \alpha = 0 \implies z_{\pm} = \frac{1+\alpha - \gamma \lambda_j \pm \sqrt{(1+\alpha - \gamma \lambda_j)^2 - 4 \alpha}}{2}.
\end{align}
The condition $(1+\alpha)\mathrm{Id} - \gamma A \succeq 0$ implies that $1+\alpha - \gamma \lambda_j > 0$ for any $j \in \{1, \dots, d\}$. If $(1+\alpha - \gamma \lambda_j)^2 - 4 \alpha > 0$,
\begin{align}
|z_{\pm}| &= \frac{1+\alpha - \gamma \lambda_j \pm \sqrt{(1+\alpha - \gamma \lambda_j)^2 - 4 \alpha}}{2} \\ &= \frac{1+\alpha - \gamma \lambda_j \pm \sqrt{1+2\alpha + \alpha^2 + \gamma^2 \lambda_j^2 -2\gamma \lambda_j (1+\alpha) - 4 \alpha}}{2} \\ &\leq \frac{1+\alpha - \gamma \lambda_j + \sqrt{(1-\alpha)^2 - \gamma \lambda_j (1+\alpha)}}{2} < \frac{1+\alpha + 1-\alpha}{2} = 1,
\end{align}
where we used that $\gamma^2 \lambda_j^2 < \gamma \lambda_j (1+\alpha)$ in the second equality. If $(1+\alpha - \gamma \lambda_j)^2 - 4 \alpha < 0$,
\begin{align}
    |z_{\pm}|^2 = \frac{(1+\alpha - \gamma \lambda_j)^2 + 4\alpha - (1+\alpha - \gamma \lambda_j)^2}{4} = \alpha < 1.
\end{align}
Hence, all the zeros must have modulus strictly less than 1, which concludes the proof for SGDM.
For SNAG, we have that if the matrix $z^2 \mathrm{Id} - z (1 + \alpha) (\mathrm{Id} - \gamma A) + \alpha (\mathrm{Id} - \gamma A)$ did not have full rank, for some $j \in \{1, \dots, d\}$,
\begin{align}
\begin{split}
    &z^2 - (1 + \alpha) (1 - \gamma \lambda_j) z + \alpha (1 - \gamma \lambda_j) = 0 \\ &\implies z_{\pm} = \frac{(1 + \alpha) (1 - \gamma \lambda_j) \pm \sqrt{(1 + \alpha)^2 (1 - \gamma \lambda_j)^2 - 4 \alpha (1 - \gamma \lambda_j)}}{2}.
\end{split}
\end{align} 
The condition $\mathrm{Id} - \gamma A \succeq 0$ implies that $1 - \gamma \lambda_j > 0$ for any $j \in \{1, \dots, d\}$. If $(1 + \alpha)^2 (1 - \gamma \lambda_j)^2 - 4 \alpha (1 - \gamma \lambda_j)> 0$, we have that
\begin{align}
    |z_{\pm}| &= \frac{(1 + \alpha) (1 - \gamma \lambda_j) \pm \sqrt{(1 + \alpha)^2 (1 - \gamma \lambda_j)^2 - 4 \alpha (1 - \gamma \lambda_j)}}{2} \\ &\leq \frac{(1 + \alpha) (1 - \gamma \lambda_j) + \sqrt{(1 + \alpha)^2 (1 - \gamma \lambda_j)^2 - 4 \alpha (1 - \gamma \lambda_j)^2}}{2} \\ &= \frac{(1 + \alpha) (1 - \gamma \lambda_j) + (1 - \alpha) (1 - \gamma \lambda_j)}{2} = 1 - \gamma \lambda_j < 1,
\end{align}
while if $(1 + \alpha)^2 (1 - \gamma \lambda_j)^2 - 4 \alpha (1 - \gamma \lambda_j) < 0$,
\begin{align}
    |z_{\pm}|^2 = \frac{(1 + \alpha)^2 (1 - \gamma \lambda_j)^2 + 4 \alpha (1 - \gamma \lambda_j) - (1 + \alpha)^2 (1 - \gamma \lambda_j)^2}{4} = \alpha (1 - \gamma \lambda_j) < 1.
\end{align}
\end{proof}

\textit{\textbf{Proof of Proposition \ref{prop:integral}.}}
    Define $\xi = 1+\alpha -\gamma \lambda$ and $\chi = \alpha$, and $p = \frac{2 \xi(1 + \chi)}{1+\xi^2+\chi^2}$, $q = \frac{2 \chi}{1+\xi^2+\chi^2}$. We can reexpress the integral as
\begin{align} \label{eq:exp_to_cos}
    &\gamma^2 \int_{\epsilon}^{1-\epsilon} (e^{-4\pi i f} - \xi e^{-2\pi i f} + \chi)^{-1} (e^{4\pi i f} - \xi e^{2\pi i f} + \chi)^{-1} \, df 
    \\ &= \int_{\epsilon}^{1-\epsilon} (1+\xi^2+\chi^2 - \xi(1 + \chi) (e^{-2\pi i f} + e^{2\pi i f}) + \chi(e^{-4\pi i f} + e^{4\pi i f}))^{-1} \, df \\ &= \int_{\epsilon}^{1-\epsilon} (1+\xi^2+\chi^2 - 2 \xi(1 + \chi) \cos(2\pi f) + 2 \chi \cos(4\pi f))^{-1} \, df \\ &= \frac{1}{2\pi(1+\xi^2+\chi^2)} \int_{2\pi \epsilon}^{2\pi(1-\epsilon)} \left(1 - \frac{2 \xi(1 + \chi)}{1+\xi^2+\chi^2} \cos(f) + \frac{2 \chi}{1+\xi^2+\chi^2} \cos(2f) \right)^{-1} \, df
    \\ &= \frac{1}{2\pi(1+\xi^2+\chi^2)} \int_{-\pi(1-2\epsilon)}^{\pi(1-2\epsilon)} \left(1 - \frac{2 \xi(1 + \chi)}{1+\xi^2+\chi^2} \cos(f + \pi) + \frac{2 \chi}{1+\xi^2+\chi^2} \cos(2(f+\pi)) \right)^{-1} \, df
    \\ &= \frac{1}{2\pi(1+\xi^2+\chi^2)} \int_{-\pi(1-2\epsilon)}^{\pi(1-2\epsilon)} \left(1 + p \cos(f) + q \cos(2f) \right)^{-1} \, df
\end{align}
We are interested in computing $\int \left(1 + p \cos(f) + q \cos(2f) \right)^{-1} \, dx$.
We make the change of variables $u = \tan(\frac{x}{2})$, which means that $x = 2 \arctan(u)$ and $dx = \frac{2}{1+u^2} \, du$. Note that
\begin{align}
    \frac{1 - \tan^2(\frac{x}{2})}{1 + \tan^2(\frac{x}{2})} = \frac{\frac{\cos^2(\frac{x}{2}) - \sin^2(\frac{x}{2})}{\cos^2(\frac{x}{2})}}{\frac{\cos^2(\frac{x}{2}) + \sin^2(\frac{x}{2})}{\cos^2(\frac{x}{2})}} = \cos^2\left(\frac{x}{2} \right) - \sin^2 \left(\frac{x}{2} \right) = \cos(x),
\end{align}
and this implies that $\cos(x) = \frac{1-u^2}{1+u^2}$. Also, the double angle formula for the tangent is $\tan(2x) = \frac{2 \tan(x)}{1-\tan^2(x)}$, which means that
\begin{align}
    \cos(2x) &= \frac{1 - \tan^2(x)}{1 + \tan^2(x)} = \frac{1 - \left(\frac{2 \tan(\frac{x}{2})}{1-\tan^2(\frac{x}{2})}\right)^2}{1 + \left(\frac{2 \tan(\frac{x}{2})}{1-\tan^2(\frac{x}{2})}\right)^2} = \frac{1 - \left(\frac{2 u}{1-u^2}\right)^2}{1 + \left(\frac{2 u}{1-u^2}\right)^2} 
    = \frac{1-6u^2 + u^4}{(1 + u^2)^2}.
\end{align}
Thus,
\begin{align} \label{eq:change_tan}
    \int \frac{1}{1+p \cos(x) + q\cos(2x)} \, dx &= \int \frac{2}{(1 + u^2)\left(1+ \frac{a (1-u^2)}{1+u^2} + \frac{b(1-6u^2 + u^4)}{(1 + u^2)^2} \right)} \, du \\ &= \int \frac{2}{1 + u^2 + p (1-u^2) + \frac{q(1-6u^2 + u^4)}{1 + u^2}} \, du \\ &= \int \frac{2(1 + u^2)}{(1 + u^2)^2 + p (1-u^2)(1 + u^2) + q(1-6u^2 + u^4)} \, du 
    \\ &= \int \frac{2(1 + u^2)}{1 + p + q + (2 -6q) u^2 + (1 - p + q) u^4} \, du.
\end{align}
We can solve this integral by partial fractions. First, we compute the roots of the denominator: setting $v = u^2$, the roots fulfill $(1 - p + q) v^2 + (2 -6q) v + 1 + p + q = 0$, which implies that 
\begin{align} \label{eq:v_pm}
    v_{\pm} 
    = \frac{-(1-3q) \pm \sqrt{(1 -3q)^2 - (1 - p + q)(1 + p + q)}}{1 - p + q}.
\end{align}
At this point it is convenient to compute $1-p+q$, $1+p+q$ and $1-3q$: \autoref{lem:1_minus_p_plus_q}(i) shows that they are equal to
\begin{align} 
\begin{split}\label{eq:1_minus_p_plus_q_SGDM}
    1-p+q &= \frac{\gamma^2 \lambda^2}{1+(1 + \alpha - \gamma \lambda)^2+\alpha^2} > 0, \\
    1+p+q &= \frac{2(1 + \alpha - \gamma \lambda)^2 + 2(1+\alpha)^2 - \gamma^2 \lambda^2}{1+(1 + \alpha - \gamma \lambda)^2+\alpha^2}, \\
    1-3q &= 
    \frac{2(1-\alpha)^2+ \gamma^2 \lambda^2 - 2(1 + \alpha)\gamma \lambda}{1+(1 + \alpha - \gamma \lambda)^2+\alpha^2}.
\end{split}
\end{align}
Using these expressions, we get
\begin{align}
\begin{split} \label{eq:sqrt_positive}
    &(1+(1 + \alpha - \gamma \lambda)^2+\alpha^2)^2((1 -3q)^2 - (1 - p + q)(1 + p + q)) \\ &= (2(1-\alpha)^2+ \gamma^2 \lambda^2 - 2(1 + \alpha)\gamma \lambda)^2 - \gamma^2 \lambda^2(2(1 + \alpha - \gamma \lambda)^2 + 2(1+\alpha)^2 - \gamma^2 \lambda^2) \\ &= 
    4(1-\alpha)^4 + \gamma^4 \lambda^4 + 4(1 + \alpha)^2 \gamma^2 \lambda^2 + 4 (1-\alpha)^2 \gamma^2 \lambda^2 - 8 (1-\alpha)^2 (1 + \alpha) \gamma \lambda - 4 \gamma^3 \lambda^3 (1 + \alpha) \\ &- 2\gamma^2 \lambda^2 ((1 + \alpha)^2 + \gamma^2 \lambda^2 - 2 \gamma \lambda (1+\alpha)) - 2\gamma^2 \lambda^2 (1+\alpha)^2 + \gamma^4 \lambda^4 \\ &= 4(1-\alpha)^4 + 4 (1-\alpha)^2 \gamma^2 \lambda^2 - 8 (1-\alpha)^2 (1 + \alpha) \gamma \lambda \\ &= 4 (1-\alpha)^2 ((1-\alpha)^2 + \gamma^2 \lambda^2 - 2 (1 + \alpha) \gamma \lambda) > 0,
\end{split}
\end{align}
where the last equality holds by the assumption that $(1-\alpha)^2 + \gamma^2 \lambda^2 - 2 (1 + \alpha) \gamma \lambda > 0$.
Using equations \eqref{eq:1_minus_p_plus_q_SGDM} and \eqref{eq:sqrt_positive}, if we define $\rho_{\pm} = -v_{\mp}$, we get
\begin{align}
    \rho_{\pm} = \frac{2(1-\alpha)^2+ \gamma^2 \lambda^2 - 2(1 + \alpha)\gamma \lambda \pm 2 (1-\alpha) \sqrt{(1-\alpha)^2+ \gamma^2 \lambda^2 - 2 (1 + \alpha) \gamma \lambda}}{\gamma^2 \lambda^2}.
\end{align}
Note that $\rho_{\pm}$ are both positive because $1-3q \geq \sqrt{(1-3q)^2 - (1-p+q)(1+p+q)}$, since 
\begin{align}
    (1-p+q)(1+p+q) &= \frac{\gamma^2 \lambda^2(2(1 + \alpha - \gamma \lambda)^2 + 2(1+\alpha)^2 - \gamma^2 \lambda^2)}{(1+(1 + \alpha - \gamma \lambda)^2+\alpha^2)^2} \\ &= \frac{\gamma^2 \lambda^2(2(1 + \alpha)^2 + 2\gamma^2 \lambda^2 - 4 (1+\alpha) \gamma \lambda + 2(1+\alpha)^2 - \gamma^2 \lambda^2)}{(1+(1 + \alpha - \gamma \lambda)^2+\alpha^2)^2} \\ &\geq \frac{\gamma^2 \lambda^2(2(1+\alpha)^2 - \gamma^2 \lambda^2)}{(1+(1 + \alpha - \gamma \lambda)^2+\alpha^2)^2} \geq 0.
\end{align}
Here the first inequality holds because $(1+\alpha)^2 \geq (1-\alpha)^2$ since $\alpha \in [0,1)$, and by the assumption that $(1-\alpha)^2 + \gamma^2 \lambda^2 - 2 (1 + \alpha) \gamma \lambda \geq 0$, and the last inequality holds because $2(1+\alpha)^2 - \gamma^2 \lambda^2 \geq (1-\alpha)^2 - \gamma^2 \lambda^2 \geq 0$, by the same assumption.

Hence, the denominator of \eqref{eq:change_tan} has four imaginary roots $u_1 = i\sqrt{\rho_{+}}, u_2=-i\sqrt{\rho_{+}}, u_3 = i\sqrt{\rho_{+}}, u_4 = -i\sqrt{\rho_{+}}$. Since $(u-u_1)(u-u_2)(u-u_1)(u-u_2) = (u-i\sqrt{\rho_{+}})(u+i\sqrt{\rho_{+}}) (u-i\sqrt{\rho_{-}})(u+i\sqrt{\rho_{-}}) = (u^2 + \rho_{+})(u^2 + \rho_{-})$, we obtain
\begin{align}
   \frac{2(1 + u^2)}{1 + p + q + (2 -6p) u^2 + (1 - p + q) u^4} = \frac{2}{(1-p+q)} \frac{1+u^2}{(u^2 + \rho_{+})(u^2 + \rho_{-})}
\end{align}
Assume that $A,B$ are such that
\begin{align}
    \frac{1+u^2}{(u^2 + \rho_{+})(u^2 + \rho_{-})} = \frac{A}{u^2 + \rho_{+}} + \frac{B}{u^2 + \rho_{-}} = \frac{A(u^2 + \rho_{-}) + B(u^2 + \rho_{+})}{(u^2 + \rho_{+})(u^2 + \rho_{-})} = \frac{(A+B)u^2 + A\rho_{-} + B \rho_{+}}{(u^2 + \rho_{+})(u^2 + \rho_{-})}.
\end{align}
That is, $A + B = 1$ and $A\rho_{-} + B \rho_{+} = 1$, which means that $A = \frac{\rho_{+} - 1}{\rho_{+} - \rho_{-}}$, $B = \frac{1 - \rho_{-}}{\rho_{+} - \rho_{-}}$. Thus, the right-hand side of \eqref{eq:change_tan} is equal to:
\begin{align}
    &\frac{2}{(1-p+q)} \int \left( \frac{\rho_{+} - 1}{\rho_{+} - \rho_{-}} \cdot \frac{1}{u^2 + \rho_{+}} + \frac{1 - \rho_{-}}{\rho_{+} - \rho_{-}} \cdot \frac{1}{u^2 + \rho_{-}} \right) \, du \\ &= \frac{2}{(1-p+q)(\rho_{+} - \rho_{-})} \left( \frac{\rho_{+} - 1}{\sqrt{\rho_{+}}} \arctan \left(\frac{u}{\sqrt{\rho_{+}}} \right) + \frac{1 - \rho_{-}}{\sqrt{\rho_{-}}} \arctan \left(\frac{u}{\sqrt{\rho_{-}}} \right) \right) 
\end{align}
In the equality we used that $\int \frac{1}{t^2 + m^2} \, dt = \frac{1}{m} \arctan(t/m)$. We undo the change of variables $u = \tan(\frac{x}{2})$, and we obtain that the right-hand side of 
\eqref{eq:exp_to_cos} reads
\begin{align}
    &\frac{\left[ \frac{\rho_{+} - 1}{\sqrt{\rho_{+}}} \arctan \left(\frac{\tan(\frac{x}{2})}{\sqrt{\rho_{+}}} \right) + \frac{1 - \rho_{-}}{\sqrt{\rho_{-}}} \arctan \left(\frac{\tan(\frac{x}{2})}{\sqrt{\rho_{-}}} \right) \right]_{-\pi(1-2\epsilon)}^{\pi(1-2\epsilon)}}{\pi(1+\xi^2+\chi^2)(1-p+q)(\rho_{+} - \rho_{-})}  
    \\ &= \frac{\left[ \frac{\rho_{+} - 1}{\sqrt{\rho_{+}}} \arctan \left(\frac{\tan(\frac{x}{2})}{\sqrt{\rho_{+}}} \right) + \frac{1 - \rho_{-}}{\sqrt{\rho_{-}}} \arctan \left(\frac{\tan(\frac{x}{2})}{\sqrt{\rho_{-}}} \right) \right]_{-\pi(1-2\epsilon)}^{\pi(1-2\epsilon)}}{4 \pi (1-\alpha) \sqrt{(1-\alpha)^2+ \gamma^2 \lambda^2 - 2 (1 + \alpha) \gamma \lambda}} 
\end{align}
The equality holds because $(1+\xi^2+\chi^2)(1-p+q) = \gamma^2 \lambda^2$, and $\rho_{+} - \rho_{-} = \frac{4 (1-\alpha)}{\gamma^2 \lambda^2} \sqrt{(1-\alpha)^2+ \gamma^2 \lambda^2 - 2 (1 + \alpha) \gamma \lambda}$.
\qed

\begin{lemma} \label{lem:sgd_integral}
    For SGD (the setting $\alpha = 0$), the integral in Proposition \ref{prop:integral} simplifies to:
\begin{align} \label{eq:int_sgd_simple}
    \int_{\epsilon}^{1-\epsilon} |e^{-4\pi i f} - e^{-2\pi i f} (1 - \gamma \lambda)|^{-2} \, df = \frac{\left[ \arctan \left(\frac{\tan(\frac{x}{2})}{\sqrt{\rho_{+}}} \right) \right]_{-\pi(1-2\epsilon)}^{\pi(1-2\epsilon)}}{\pi \gamma \lambda (2-\gamma \lambda)}
\end{align}
Equation \eqref{eq:variance_SGD_it} follows from this.
\end{lemma}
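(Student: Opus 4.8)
The plan is to derive Lemma \ref{lem:sgd_integral} entirely as the $\alpha = 0$ specialization of Proposition \ref{prop:integral}, so that no new integration is required; the whole argument is algebraic simplification of $\rho_\pm$ and of the prefactor in \eqref{eq:integral_expression}, followed by a boundary-term limit.

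First I would substitute $\alpha = 0$ into the discriminant $(1-\alpha)^2 + \gamma^2\lambda^2 - 2(1+\alpha)\gamma\lambda$ that appears throughout \eqref{eq:rho_sgdm_def} and \eqref{eq:integral_expression}. It collapses to the perfect square $1 + \gamma^2\lambda^2 - 2\gamma\lambda = (1-\gamma\lambda)^2$, so its square root equals $|1-\gamma\lambda| = 1-\gamma\lambda$ in the stable regime $\gamma\lambda < 1$; this is the only place the regime assumption enters. Feeding this into \eqref{eq:rho_sgdm_def} gives $\rho_- = \frac{2 - 2\gamma\lambda + \gamma^2\lambda^2 - 2(1-\gamma\lambda)}{\gamma^2\lambda^2} = 1$ and $\rho_+ = \frac{2 - 2\gamma\lambda + \gamma^2\lambda^2 + 2(1-\gamma\lambda)}{\gamma^2\lambda^2} = \frac{4(1-\gamma\lambda)+\gamma^2\lambda^2}{\gamma^2\lambda^2}$, which are exactly the values asserted in \eqref{eq:variance_SGD_it}. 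The crucial observation is that, because $\rho_- = 1$, the coefficient $\frac{1-\rho_-}{\sqrt{\rho_-}}$ of the second arctangent in \eqref{eq:integral_expression} vanishes identically, so only the $\rho_+$ term survives.

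Next I would simplify the surviving prefactor. The second perfect square $4(1-\gamma\lambda)+\gamma^2\lambda^2 = (2-\gamma\lambda)^2$ yields $\sqrt{\rho_+} = \frac{2-\gamma\lambda}{\gamma\lambda}$ and $\rho_+ - 1 = \frac{4(1-\gamma\lambda)}{\gamma^2\lambda^2}$. Combining the factor $\frac{\rho_+-1}{\sqrt{\rho_+}}$ with the overall denominator $4\pi(1-\alpha)\sqrt{(1-\gamma\lambda)^2} = 4\pi(1-\gamma\lambda)$ gives $\frac{\rho_+-1}{\sqrt{\rho_+}}\cdot\frac{1}{4\pi(1-\gamma\lambda)} = \frac{1}{\pi\gamma^2\lambda^2\sqrt{\rho_+}} = \frac{1}{\pi\gamma\lambda(2-\gamma\lambda)}$, which is precisely the coefficient in \eqref{eq:int_sgd_simple}. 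This establishes the integral identity.

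Finally, to recover \eqref{eq:variance_SGD_it} I would send $\epsilon \to 0$ in \eqref{eq:int_sgd_simple}. Since $\tan(x/2) \to \pm\infty$ as $x \to \pm\pi$, we get $\arctan(\tan(x/2)/\sqrt{\rho_+}) \to \pm\tfrac{\pi}{2}$, so the bracket evaluates to $\pi$ and $\int_0^1 |e^{-4\pi i f} - e^{-2\pi i f}(1-\gamma\lambda)|^{-2}\,df = \frac{1}{\gamma\lambda(2-\gamma\lambda)}$. Substituting $\lambda = \lambda_i$ into the eigenvalue-wise decomposition $\mathrm{Tr}[\mathrm{Var}(x_k)] = \gamma^2\sum_i (u_i^\top\Sigma u_i)\int_0^1(\cdots)\,df$ coming from \eqref{eq:trace_integral}--\eqref{eq:tr_decomposition} and cancelling one power of $\gamma$ gives $\gamma\sum_i (u_i^\top\Sigma u_i)\frac{1}{\lambda_i(2-\gamma\lambda_i)}$, as claimed. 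I anticipate no genuine obstacle: the computation is bookkeeping, and the single point demanding care is the sign of $\sqrt{(1-\gamma\lambda)^2}$, i.e. the implicit assumption $\gamma\lambda_i < 1$ (or at most $\gamma\lambda_i < 2$ for a positive variance), which I would flag explicitly.
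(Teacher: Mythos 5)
Your proposal is correct and follows essentially the same route as the paper: substitute $\alpha = 0$ into Proposition~\ref{prop:integral}, use the perfect squares $(1-\gamma\lambda)^2$ and $(2-\gamma\lambda)^2$ to get $\rho_- = 1$ (killing the second arctangent) and $\sqrt{\rho_+} = \frac{2-\gamma\lambda}{\gamma\lambda}$, and simplify the prefactor to $\frac{1}{\pi\gamma\lambda(2-\gamma\lambda)}$, with the $\epsilon \to 0$ limit recovering \eqref{eq:variance_SGD_it} exactly as in the main text. Your explicit flagging of the sign of $\sqrt{(1-\gamma\lambda)^2}$ is a welcome precision the paper leaves implicit, but it does not change the argument.
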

\begin{proof}
If we subsitute $\alpha = 0$ into \eqref{eq:rho_sgdm_def}, we get:
\begin{align}
    &\rho_{\pm} = \frac{2 + \gamma^2 \lambda^2 - 2\gamma \lambda \pm 2(1-\gamma \lambda)}{\gamma^2 \lambda^2} = 
    \begin{cases} \frac{4(1-\gamma \lambda) +\gamma^2 \lambda^2}{\gamma^2 \lambda^2} \\ 1 
    \end{cases}
    \\ &\implies \sqrt{\rho_{+}} - \frac{1}{\sqrt{\rho_{+}}} = \frac{\rho_{+} - 1}{\sqrt{\rho_{+}}} 
    = \frac{\frac{4(1-\gamma \lambda)}{\gamma^2 \lambda^2}}{\sqrt{1 +\frac{4(1-\gamma \lambda)}{\gamma^2 \lambda^2}}} = \frac{4(1-\gamma \lambda)}{\gamma^2 \lambda^2} \frac{\gamma \lambda}{2 - \gamma \lambda} = \frac{4(1-\gamma \lambda)}{\gamma \lambda (2-\gamma \lambda)},
\end{align}
Here, we used that $1 + \frac{4(1-\gamma \lambda)}{\gamma^2 \lambda^2} = \frac{(2-\gamma \lambda)^2}{\gamma^2 \lambda^2}$. Also, $\sqrt{\rho_{-}} - \frac{1}{\sqrt{\rho_{-}}} = 0$. Equation \eqref{eq:int_sgd_simple} follows.
\end{proof}

\begin{lemma} \label{lem:1_minus_p_plus_q}
(i) When $\xi = 1+\alpha -\gamma \lambda$ and $\chi = \alpha$, we have that $1-p+q$, $1+p+q$ and $1-3b$ are given by equation \eqref{eq:1_minus_p_plus_q_SGDM}. 
(ii) When $\xi = (1+\alpha)(1 -\gamma \lambda)$ and $\chi = \alpha(1 -\gamma \lambda)$, $1-p+q$, $1+p+q$ and $1-3b$ are given by \eqref{eq:1_minus_p_plus_q_SNAG}.
\end{lemma}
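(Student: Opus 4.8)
The plan is to verify all three identities by direct substitution, exploiting the fact that $p$ and $q$ (defined in the proof of Proposition~\ref{prop:integral} as $p = 2\xi(1+\chi)/(1+\xi^2+\chi^2)$ and $q = 2\chi/(1+\xi^2+\chi^2)$) share the common denominator $1+\xi^2+\chi^2$. The cleanest route is to first record two identities valid for arbitrary $\xi,\chi$, namely
\[
1 - p + q = \frac{(\xi - 1 - \chi)^2}{1+\xi^2+\chi^2}, \qquad 1 + p + q = \frac{(\xi + 1 + \chi)^2}{1+\xi^2+\chi^2}.
\]
Both follow by placing the left-hand sides over the common denominator: the numerators are $1+\xi^2+\chi^2 \mp 2\xi(1+\chi) + 2\chi$, and expanding $(1+\chi)^2 = 1 + 2\chi + \chi^2$ shows these equal the perfect squares $(\xi \mp (1+\chi))^2$. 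For $1-3q$ there is no comparable factorization, so I would simply write $1 - 3q = (1+\xi^2+\chi^2 - 6\chi)/(1+\xi^2+\chi^2)$ and simplify the numerator in each case.

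For part (i) I would substitute $\xi = 1+\alpha-\gamma\lambda$ and $\chi = \alpha$. Then $\xi - 1 - \chi = -\gamma\lambda$ and $\xi + 1 + \chi = 2(1+\alpha) - \gamma\lambda$, so the numerators become $\gamma^2\lambda^2$ and $(2(1+\alpha)-\gamma\lambda)^2$. Using $\gamma\lambda = 1+\alpha-\xi$ one checks $(2(1+\alpha)-\gamma\lambda)^2 = 2\xi^2 + 2(1+\alpha)^2 - \gamma^2\lambda^2$, matching the numerators claimed in \eqref{eq:1_minus_p_plus_q_SGDM}; the denominator is $1+(1+\alpha-\gamma\lambda)^2+\alpha^2$ by definition. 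For the third identity I would expand $1 + \xi^2 + \alpha^2 - 6\alpha$ using $\xi^2 = (1+\alpha)^2 - 2(1+\alpha)\gamma\lambda + \gamma^2\lambda^2$ and collect terms, which yields exactly $2(1-\alpha)^2 + \gamma^2\lambda^2 - 2(1+\alpha)\gamma\lambda$.

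For part (ii) the same general identities apply with $\xi = (1+\alpha)(1-\gamma\lambda)$ and $\chi = \alpha(1-\gamma\lambda)$. The convenient cancellation $\xi - 1 - \chi = (1-\gamma\lambda)(1+\alpha-\alpha) - 1 = -\gamma\lambda$ again produces numerator $\gamma^2\lambda^2$ for $1-p+q$, so the SGDM and SNAG formulas for $1-p+q$ differ only through the denominator $1+\xi^2+\chi^2$. Substituting the SNAG values into the expressions for $1+p+q$ and $1-3q$ and simplifying then reproduces \eqref{eq:1_minus_p_plus_q_SNAG}.

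I expect no conceptual obstacle here: the statement is a bookkeeping exercise. The one thing worth getting right is spotting the perfect-square factorizations, which eliminate sign errors and make transparent why $\gamma^2\lambda^2$ appears in the numerator of $1-p+q$ in both regimes. The $1-3q$ identity is the most error-prone, since it admits no factorization, so I would double-check it by an independent expansion of the numerator.
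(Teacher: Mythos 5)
Your proof is correct, and it takes a genuinely cleaner route than the paper's. The paper proves both parts by brute force: it expands $p$ and $q$ into explicit fractions (e.g.\ rewriting $p = 1 + \frac{2\alpha - \gamma^2\lambda^2}{1+(1+\alpha-\gamma\lambda)^2+\alpha^2}$ in case (i), and an analogous but much messier expansion in case (ii)), then combines terms fraction by fraction. Your observation that for arbitrary $\xi,\chi$ one has
\begin{align}
    1 \mp p + q = \frac{\bigl(\xi \mp (1+\chi)\bigr)^2}{1+\xi^2+\chi^2}
\end{align}
shortcuts all of this, and it exposes the structural reason the numerator of $1-p+q$ is $\gamma^2\lambda^2$ in both regimes: both parametrizations satisfy $\xi - (1+\chi) = -\gamma\lambda$ (in case (ii) because the $\alpha(1-\gamma\lambda)$ terms cancel). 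The paper's computation offers no such explanation. One small point worth making explicit in your write-up of part (ii): rather than re-deriving the substitution $\gamma\lambda = 1+\alpha-\xi$ as in part (i), note that the identity $(\xi+1+\chi)^2 = 2\xi^2 + 2(1+\chi)^2 - (\xi-1-\chi)^2$ holds for all $\xi,\chi$ (parallelogram law), so with $(\xi-1-\chi)^2 = \gamma^2\lambda^2$ it immediately yields the paper's stated form $1+p+q = \frac{2\xi^2 + 2(1+\chi)^2 - \gamma^2\lambda^2}{1+\xi^2+\chi^2}$ in \emph{both} cases at once, closing the slightly hand-wavy ``simplifying then reproduces'' step for SNAG. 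Finally, your worry about $1-3q$ being error-prone is unfounded in case (ii): equation \eqref{eq:1_minus_p_plus_q_SNAG} states $1-3q$ as the unsimplified quotient $\frac{1+\xi^2+\chi^2-6\chi}{1+\xi^2+\chi^2}$, so nothing beyond substitution is required there; only the SGDM case needs the expansion you describe, which checks out.
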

\begin{proof} (i)
\begin{align}
    p &= \frac{2 (1 + \alpha - \gamma \lambda) (1 + \alpha)}{1+(1 + \alpha - \gamma \lambda)^2+\alpha^2} = \frac{2 + 2\alpha - 2\gamma \lambda + 2\alpha + 2 \alpha^2 - 2\alpha \gamma \lambda}{2 + \alpha - \gamma \lambda + \alpha + \alpha^2 - \alpha \gamma \lambda -\gamma \lambda -\gamma \lambda \alpha + \gamma^2 \lambda^2 +\alpha^2} \\ &= \frac{2 + 4\alpha - 2\gamma \lambda + 2 \alpha^2 - 2\alpha \gamma \lambda}{2 + 2\alpha - 2\gamma \lambda + 2\alpha^2 - 2\alpha \gamma \lambda  + \gamma^2 \lambda^2} = 1 + \frac{2\alpha - \gamma^2 \lambda^2}{1+(1 + \alpha - \gamma \lambda)^2+\alpha^2}, \\
    q &= \frac{2\alpha}{1+(1 + \alpha - \gamma \lambda)^2+\alpha^2}.
\end{align}
Thus,
\begin{align}
    1 - p + q = \frac{- 2\alpha + \gamma^2 \lambda^2 + 2\alpha}{1+(1 + \alpha - \gamma \lambda)^2+\alpha^2} = \frac{\gamma^2 \lambda^2}{1+(1 + \alpha - \gamma \lambda)^2+\alpha^2} > 0.
\end{align}
Also,
\begin{align}
    1+p+q &= \frac{2(1+(1 + \alpha - \gamma \lambda)^2+\alpha^2) + 4\alpha - \gamma^2 \lambda^2}{1+(1 + \alpha - \gamma \lambda)^2+\alpha^2} = \frac{2(1 + \alpha - \gamma \lambda)^2 + 2(1+\alpha)^2 - \gamma^2 \lambda^2}{1+(1 + \alpha - \gamma \lambda)^2+\alpha^2}, \\ 
    1-3q &= \frac{1+(1 + \alpha - \gamma \lambda)^2+\alpha^2 - 6\alpha}{1+(1 + \alpha - \gamma \lambda)^2+\alpha^2} = \frac{1+(1 + \alpha)^2 + \gamma^2 \lambda^2 - 2(1 + \alpha)\gamma \lambda +\alpha^2 - 6\alpha}{1+(1 + \alpha - \gamma \lambda)^2+\alpha^2} \\ &= \frac{2(1-\alpha)^2+ \gamma^2 \lambda^2 - 2(1 + \alpha)\gamma \lambda}{1+(1 + \alpha - \gamma \lambda)^2+\alpha^2} 
\end{align}
(ii)
\begin{align}
    p &= \frac{2 (1 + \alpha) (1 - \gamma \lambda) (1 + \alpha (1 - \gamma \lambda))}{1+(1 + \alpha)^2 (1 - \gamma \lambda)^2+\alpha^2 (1 - \gamma \lambda)^2} 
    \\ &= \frac{2 + 4\alpha - 2\gamma \lambda - 6\alpha \gamma \lambda + 2\alpha^2 - 4 \alpha^2 \gamma \lambda + 2\alpha \gamma^2 \lambda^2 + 2\alpha^2 \gamma^2 \lambda^2}{2 + 2\alpha + 2\alpha^2 - 2 \gamma \lambda - 4 \gamma \lambda \alpha - 4 \gamma \lambda \alpha^2 + \gamma^2 \lambda^2 + 2 \gamma^2 \lambda^2 \alpha + 2 \gamma^2 \lambda^2 \alpha^2} 
    \\ &= 1 + \frac{4\alpha - 6\alpha \gamma \lambda - (2\alpha - 4 \gamma \lambda \alpha + \gamma^2 \lambda^2)}{2 + 2\alpha + 2\alpha^2 - 2 \gamma \lambda - 4 \gamma \lambda \alpha - 4 \gamma \lambda \alpha^2 + \gamma^2 \lambda^2 + 2 \gamma^2 \lambda^2 \alpha + 2 \gamma^2 \lambda^2 \alpha^2}
    \\ &= 1 + \frac{2\alpha - 2\alpha \gamma \lambda - \gamma^2 \lambda^2}{1+(1 + \alpha)^2 (1 - \gamma \lambda)^2+\alpha^2 (1 - \gamma \lambda)^2}.
\end{align}
\begin{align}
    q = \frac{2 \alpha (1 - \gamma \lambda)}{1+(1 + \alpha)^2 (1 - \gamma \lambda)^2+\alpha^2 (1 - \gamma \lambda)^2}.
\end{align}
Thus,
\begin{align}
    1-p+q = \frac{-2\alpha + 2\alpha \gamma \lambda + \gamma^2 \lambda^2 + 2 \alpha (1 - \gamma \lambda)}{1+(1 + \alpha)^2 (1 - \gamma \lambda)^2+\alpha^2 (1 - \gamma \lambda)^2} = \frac{\gamma^2 \lambda^2}{1+(1 + \alpha)^2 (1 - \gamma \lambda)^2+\alpha^2 (1 - \gamma \lambda)^2}
\end{align}
\begin{align}
    1+p+q &= \frac{2(1+(1 + \alpha)^2 (1 - \gamma \lambda)^2+\alpha^2 (1 - \gamma \lambda)^2) + 4\alpha(1 - \gamma \lambda) - \gamma^2 \lambda^2}{1+(1 + \alpha)^2 (1 - \gamma \lambda)^2+\alpha^2 (1 - \gamma \lambda)^2} \\ &= \frac{2(1 + \alpha)^2 (1 - \gamma \lambda)^2 
    + 2 (\alpha(1-\gamma \lambda) + 1)^2
    - \gamma^2 \lambda^2}{1+(1 + \alpha)^2 (1 - \gamma \lambda)^2+\alpha^2 (1 - \gamma \lambda)^2} 
\end{align}
\begin{align}
    1-3q &= \frac{1+(1 + \alpha)^2 (1 - \gamma \lambda)^2+\alpha^2 (1 - \gamma \lambda)^2 - 6 \alpha (1-\gamma \lambda)}{1+(1 + \alpha)^2 (1 - \gamma \lambda)^2+\alpha^2 (1 - \gamma \lambda)^2}
\end{align}
Hence, equation \eqref{eq:1_minus_p_plus_q_SNAG} follows.
\end{proof}

\begin{lemma} \label{lem:approx_sgdm}
When $\gamma \lambda_i \ll 1-\alpha$ for all $i \in \{1,\dots,d\}$, we have $\rho_{+} = \frac{4(1-\alpha)^2}{\gamma^2 \lambda^2} + O\left(\frac{1}{\gamma \lambda} \right)$, $\rho_{-} = (\frac{1+\alpha}{1-\alpha})^{2} + O(\gamma \lambda)$, and equation \eqref{eq:replacement_limit} holds.
\end{lemma}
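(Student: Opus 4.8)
The plan is to reduce everything to one clean algebraic identity for $\rho_\pm$ and then Taylor-expand. Write $D := (1-\alpha)^2 + \gamma^2\lambda^2 - 2(1+\alpha)\gamma\lambda$ for the quantity under the square root in \eqref{eq:rho_sgdm_def}. By the hypothesis of Proposition~\ref{prop:integral} we have $D \geq 0$, and in the regime $\gamma\lambda \ll 1-\alpha$ the term $\gamma^2\lambda^2 - 2(1+\alpha)\gamma\lambda$ is negative, so in fact $0 \leq D < (1-\alpha)^2$ and hence $0 \leq \sqrt D < 1-\alpha$. The first thing I would do is notice that the constant part of the numerator in \eqref{eq:rho_sgdm_def} is exactly $(1-\alpha)^2 + D$, since $2(1-\alpha)^2 + \gamma^2\lambda^2 - 2(1+\alpha)\gamma\lambda = (1-\alpha)^2 + D$. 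Therefore the whole numerator is a perfect square, $(1-\alpha)^2 + D \pm 2(1-\alpha)\sqrt D = (\sqrt D \pm (1-\alpha))^2$, yielding the exact factorization
\begin{align}
    \rho_\pm = \frac{(\sqrt D \pm (1-\alpha))^2}{\gamma^2\lambda^2}, \qquad \sqrt{\rho_+} = \frac{\sqrt D + (1-\alpha)}{\gamma\lambda}, \qquad \sqrt{\rho_-} = \frac{(1-\alpha) - \sqrt D}{\gamma\lambda},
\end{align}
where the sign chosen for $\sqrt{\rho_-}$ uses $\sqrt D < 1-\alpha$.

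Next I would expand $\sqrt D$. Factoring $D = (1-\alpha)^2(1 + t)$ with $t = \frac{\gamma^2\lambda^2 - 2(1+\alpha)\gamma\lambda}{(1-\alpha)^2} = O\!\left(\frac{\gamma\lambda}{1-\alpha}\right)$ and applying $\sqrt{1+t} = 1 + \tfrac12 t + O(t^2)$ gives $\sqrt D = (1-\alpha) - \frac{(1+\alpha)\gamma\lambda}{1-\alpha} + O\!\left(\frac{\gamma^2\lambda^2}{1-\alpha}\right)$. Substituting into the two expressions above produces $\sqrt{\rho_+} = \frac{2(1-\alpha)}{\gamma\lambda} - \frac{1+\alpha}{1-\alpha} + O\!\left(\frac{\gamma\lambda}{1-\alpha}\right)$ and $\sqrt{\rho_-} = \frac{1+\alpha}{1-\alpha} + O(\gamma\lambda)$; squaring these delivers the two claimed asymptotics $\rho_+ = \frac{4(1-\alpha)^2}{\gamma^2\lambda^2} + O\!\left(\frac{1}{\gamma\lambda}\right)$ and $\rho_- = \left(\frac{1+\alpha}{1-\alpha}\right)^2 + O(\gamma\lambda)$.

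For the variance I would resist re-expanding each radical and instead simplify the bracket in \eqref{eq:variance_sgdm_replace} exactly using the closed forms of the first step. One gets $\sqrt{\rho_+} - \sqrt{\rho_-} = \frac{2\sqrt D}{\gamma\lambda}$, while $-\frac{1}{\sqrt{\rho_+}} + \frac{1}{\sqrt{\rho_-}} = \gamma\lambda\cdot\frac{2\sqrt D}{(1-\alpha)^2 - D}$, and since $(1-\alpha)^2 - D = \gamma\lambda\,(2(1+\alpha) - \gamma\lambda)$ the entire numerator collapses to $2\sqrt D\left(\frac{1}{\gamma\lambda} + \frac{1}{2(1+\alpha) - \gamma\lambda}\right)$. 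Dividing by the denominator $4(1-\alpha)\sqrt D$ cancels $\sqrt D$ completely, leaving the exact identity
\begin{align}
    \mathrm{Tr}[\mathrm{Var}(x_k)] = \gamma^2 \sum_{i=1}^{d} (u_i^{\top}\Sigma u_i)\,\frac{1}{2(1-\alpha)}\left(\frac{1}{\gamma\lambda_i} + \frac{1}{2(1+\alpha) - \gamma\lambda_i}\right).
\end{align}
The first summand is $\frac{\gamma(u_i^\top\Sigma u_i)}{2(1-\alpha)\lambda_i}$, and the second is $O(\gamma^2)$; this yields \eqref{eq:replacement_limit}.

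The only genuine subtleties, and where I would be most careful, are twofold. First, fixing the correct sign of $\sqrt{\rho_-}$, which is legitimate only because the regime forces $\sqrt D < 1-\alpha$ — this is the single place the hypothesis $\gamma\lambda_i \ll 1-\alpha$ is essential rather than cosmetic. Second, the error bookkeeping: I must verify that the $O(\gamma^2)$ tail from the second summand is dominated by the stated $O(\sqrt{\gamma^3/\lambda_i})$, which holds because $\gamma\lambda_i = O(1)$ in the regime forces $\gamma^2 = \sqrt{\gamma^3/\lambda_i}\cdot\sqrt{\gamma\lambda_i} = O(\sqrt{\gamma^3/\lambda_i})$. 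Everything else is routine once the perfect-square factorization of the first step is in hand.
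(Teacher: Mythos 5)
Your proof is correct, and it takes a genuinely different route from the paper's. The paper's proof of \autoref{lem:approx_sgdm} proceeds by three successive Taylor expansions: it expands $\sqrt{(1-\alpha)^2+\gamma^2\lambda^2-2(1+\alpha)\gamma\lambda}$ to second order, substitutes into \eqref{eq:rho_sgdm_def} to read off the asymptotics of $\rho_{\pm}$, and then expands $\sqrt{\rho_{+}}$, $\sqrt{\rho_{-}}$ and $\frac{1}{\sqrt{\rho_{-}}}-\sqrt{\rho_{-}}$ once more before assembling \eqref{eq:variance_sgdm_replace}, with each stage feeding the final $O(\sqrt{\gamma^3/\lambda_i})$ remainder. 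Your key observation --- that with $D=(1-\alpha)^2+\gamma^2\lambda^2-2(1+\alpha)\gamma\lambda$ the numerator of \eqref{eq:rho_sgdm_def} is the perfect square $\left(\sqrt{D}\pm(1-\alpha)\right)^2$ --- does not appear in the paper, and it buys exact closed forms $\sqrt{\rho_{+}}=\frac{\sqrt{D}+(1-\alpha)}{\gamma\lambda}$ and $\sqrt{\rho_{-}}=\frac{(1-\alpha)-\sqrt{D}}{\gamma\lambda}$, after which the bracket in \eqref{eq:variance_sgdm_replace} collapses, $\sqrt{D}$ cancels against the denominator (using $(1-\alpha)^2-D=\gamma\lambda\,(2(1+\alpha)-\gamma\lambda)$), and the variance becomes an exact identity whose only approximation is discarding the $O(\gamma^2)$ second summand --- which you correctly verify is dominated by the stated $O(\sqrt{\gamma^3/\lambda_i})$ via $\gamma\lambda_i=O(1)$. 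This is strictly more informative than the paper's computation: your error is sharper, your identity specializes at $\alpha=0$ exactly to \eqref{eq:variance_SGD_it} (so it generalizes \autoref{lem:sgd_integral} to all admissible $\alpha$), and you isolate precisely where the regime hypothesis enters the exact part, namely fixing the sign $(1-\alpha)-\sqrt{D}>0$; you are also more explicit than the paper in invoking the hypothesis $D\geq 0$ of \autoref{prop:integral}, which both arguments need since \eqref{eq:variance_sgdm_replace} presupposes it (note $\gamma\lambda\ll 1-\alpha$ alone does not imply $D\geq 0$). One cosmetic slip: tracked uniformly in $\alpha$, your $t=\frac{\gamma^2\lambda^2-2(1+\alpha)\gamma\lambda}{(1-\alpha)^2}$ is $O\left(\frac{\gamma\lambda}{(1-\alpha)^2}\right)$, not $O\left(\frac{\gamma\lambda}{1-\alpha}\right)$; this is immaterial here because the paper's own bookkeeping (e.g.\ its $O(\gamma^3\lambda^3)$ remainders) likewise treats $\alpha$ as a fixed constant, but it is worth knowing that the expansion of $\sqrt{D}$ genuinely requires $\gamma\lambda\ll(1-\alpha)^2$ --- the same implicit requirement as in the paper's proof, and in your argument it is needed only for the $\rho_{\pm}$ asymptotics, not for the exact variance identity.
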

\begin{proof}
We have that 
\begin{align}
    &\sqrt{(1-\alpha)^2+ \gamma^2 \lambda^2 - 2 (1 + \alpha) \gamma \lambda} = (1-\alpha) \sqrt{1 + \frac{\gamma^2 \lambda^2 - 2 (1 + \alpha) \gamma \lambda}{(1-\alpha)^2}} \\ &= (1-\alpha) \left( 1 + \frac{\gamma^2 \lambda^2 - 2 (1 + \alpha) \gamma \lambda}{2(1-\alpha)^2} - \frac{1}{8}\bigg(\frac{\gamma^2 \lambda^2 - 2 (1 + \alpha) \gamma \lambda}{(1-\alpha)^2}\bigg)^2 + O(\gamma^3 \lambda^3) \right) \\ &= 1-\alpha + \frac{\gamma^2 \lambda^2 - 2 (1 + \alpha) \gamma \lambda}{2(1-\alpha)} - \frac{1}{8}\frac{(\gamma^2 \lambda^2 - 2 (1 + \alpha) \gamma \lambda)^2}{(1-\alpha)^3} + O(\gamma^3 \lambda^3)
\end{align}
If we plug this into equation \eqref{eq:rho_sgdm_def}, we get
\begin{align}
    \rho_{-} 
    &= \frac{ \bigg( \frac{1}{4}\frac{(\gamma^2 \lambda^2 - 2 (1 + \alpha) \gamma \lambda)^2}{(1-\alpha)^2} + O(\gamma^3 \lambda^3) \bigg)}{\gamma^2 \lambda^2} = \frac{(\gamma \lambda - 2 (1 + \alpha))^2}{4(1-\alpha)^2} + O(\gamma \lambda) = \bigg(\frac{1+\alpha}{1-\alpha}\bigg)^{2} + O(\gamma \lambda).
\end{align}
\begin{align}
    \rho_{+} 
    &= \frac{4(1-\alpha)^2+ 2\gamma^2 \lambda^2 - 4(1 + \alpha)\gamma \lambda - \frac{1}{4}\frac{(\gamma^2 \lambda^2 - 2 (1 + \alpha) \gamma \lambda)^2}{(1-\alpha)^2} + O(\gamma^3 \lambda^3)}{\gamma^2 \lambda^2} = \frac{4(1-\alpha)^2}{\gamma^2 \lambda^2} + O\left(\frac{1}{\gamma \lambda} \right).
\end{align}
Then, 
\begin{align}
    \sqrt{\rho_{+}} &= \sqrt{\frac{4(1-\alpha)^2}{\gamma^2 \lambda^2} + O\left(\frac{1}{\gamma \lambda} \right)} = \sqrt{\frac{4(1-\alpha)^2}{\gamma^2 \lambda^2}} \sqrt{1+ \frac{O\left(\gamma \lambda \right)}{4(1-\alpha)^2}} = \frac{2(1-\alpha)}{\gamma \lambda} + O\left(\frac{1}{\sqrt{\gamma \lambda}} \right), \\
    \sqrt{\rho_{-}} &= \sqrt{\bigg(\frac{1+\alpha}{1-\alpha}\bigg)^{2} + O(\gamma \lambda)} =\sqrt{\bigg(\frac{1+\alpha}{1-\alpha}\bigg)^{2}} \sqrt{1 + O(\gamma \lambda)\bigg(\frac{1-\alpha}{1+\alpha}\bigg)^{2}} =  \frac{1+\alpha}{1-\alpha} + O(\gamma \lambda), \\
    \frac{1}{\sqrt{\rho_{-}}} &- \sqrt{\rho_{-}} = \frac{1-\alpha}{1+\alpha} - \frac{1+\alpha}{1-\alpha} + O(\gamma \lambda) = -\frac{4\alpha}{1-\alpha^2} + O(\gamma \lambda).
\end{align}
Hence, equation \eqref{eq:integral_expression} simplifies to:
\begin{align}
    \frac{\left[ \left( \frac{2(1-\alpha)}{\gamma \lambda} + O\left(\frac{1}{\sqrt{\gamma \lambda}} \right) \right) \arctan \left(\frac{\tan(\frac{x}{2})}{\frac{2(1-\alpha)}{\gamma \lambda} + O\left(\frac{1}{\sqrt{\gamma \lambda}} \right)} \right) + \left( \frac{-4\alpha}{1-\alpha^2} + O(\gamma \lambda) \right) \arctan \left(\frac{\tan(\frac{x}{2})}{\frac{-4\alpha}{1-\alpha^2} + O(\gamma \lambda)} \right) \right]_{-\pi(1-2\epsilon)}^{\pi(1-2\epsilon)}}{4 \pi (1-\alpha)^2 + O(\gamma \lambda)}.
\end{align}
And equation \eqref{eq:variance_sgdm_replace} simplifies to:
\begin{align}
    \mathrm{Tr}[\mathrm{Var}(x_k)] = \gamma^2 \sum_{i=1}^{d} (u_i^{\top} \Sigma u_i) \frac{ \frac{2(1-\alpha)}{\gamma \lambda_i} + O\left(\frac{1}{\sqrt{\gamma \lambda_i}} \right) }{4 (1-\alpha)^2 + O(\gamma \lambda_i)} = \sum_{i=1}^{d} \frac{\gamma(u_i^{\top} \Sigma u_i)}{2(1-\alpha)\lambda_i} + O\bigg(\sqrt{\frac{\gamma^3}{\lambda_i}} \bigg).
\end{align}
\end{proof}

\begin{proposition} \label{prop:integral2}
    Suppose that $(\frac{1-\alpha}{1+\alpha})^2 \geq \gamma \lambda$. Define 
    \begin{align} \label{eq:rho_snag_def}
        \rho_{\pm} &= \frac{1+(1 + \alpha)^2 (1 - \gamma \lambda)^2+\alpha^2 (1 - \gamma \lambda)^2 - 6 \alpha (1-\gamma \lambda)}{\gamma^2 \lambda^2} \\ &\pm \frac{2 (1 - \alpha(1 - \gamma \lambda)) \sqrt{(1-\gamma \lambda)((1-\alpha)^2 - \gamma \lambda (1+\alpha)^2)}}{\gamma^2 \lambda^2},
    \end{align}
    which are both non-negative.
    Then, for all $0 < \epsilon \leq 1/2$, $\int_{\epsilon}^{1-\epsilon} |e^{-4\pi i f} - e^{-2\pi i f} (1 + \alpha) (1 - \gamma \lambda) + \alpha (1 - \gamma \lambda)|^{-2} \, df$ is equal to
    \begin{align} \label{eq:integral_expression2}
        \frac{\left[ \frac{\rho_{+} - 1}{\sqrt{\rho_{+}}} \arctan \left(\frac{\tan(\frac{x}{2})}{\sqrt{\rho_{+}}} \right) + \frac{1 - \rho_{-}}{\sqrt{\rho_{-}}} \arctan \left(\frac{\tan(\frac{x}{2})}{\sqrt{\rho_{-}}} \right) \right]_{-\pi(1-2\epsilon)}^{\pi(1-2\epsilon)}}{4 \pi (1 - \alpha(1 - \gamma \lambda)) \sqrt{(1-\gamma \lambda)((1-\alpha)^2 - \gamma \lambda (1+\alpha)^2)}}
    \end{align}
\end{proposition}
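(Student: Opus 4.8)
The plan is to follow the exact blueprint of the proof of Proposition \ref{prop:integral}, only with the substitution $\xi = (1+\alpha)(1-\gamma\lambda)$ and $\chi = \alpha(1-\gamma\lambda)$ appropriate to the SNAG transfer function, for which the per-eigenvalue integrand is $|e^{-4\pi i f} - \xi e^{-2\pi i f} + \chi|^{-2}$. As in \eqref{eq:exp_to_cos}, I would first expand the modulus squared into $(1+\xi^2+\chi^2 - 2\xi(1+\chi)\cos(2\pi f) + 2\chi\cos(4\pi f))^{-1}$, shift the integration variable by $\pi$ to flip the signs of the cosine terms, and arrive at $\frac{1}{2\pi(1+\xi^2+\chi^2)}\int (1 + p\cos(f) + q\cos(2f))^{-1}\,df$ with $p = \frac{2\xi(1+\chi)}{1+\xi^2+\chi^2}$ and $q = \frac{2\chi}{1+\xi^2+\chi^2}$. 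The Weierstrass substitution $u = \tan(x/2)$ then converts the trigonometric integral into the rational integral $\int \frac{2(1+u^2)}{1+p+q + (2-6q)u^2 + (1-p+q)u^4}\,du$, exactly as in \eqref{eq:change_tan}.

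The only genuinely new computation is algebraic. Setting $v = u^2$, the roots of the quartic denominator solve $(1-p+q)v^2 + (2-6q)v + (1+p+q) = 0$, and I would invoke Lemma \ref{lem:1_minus_p_plus_q}(ii) to substitute the closed forms of $1-p+q$, $1+p+q$ and $1-3q$ given in \eqref{eq:1_minus_p_plus_q_SNAG}. The crux is evaluating the discriminant $(1-3q)^2 - (1-p+q)(1+p+q)$, the SNAG counterpart of \eqref{eq:sqrt_positive}; after clearing the common denominator this is a polynomial identity in $\alpha,\gamma,\lambda$ that I expect to collapse to $4(1-\alpha(1-\gamma\lambda))^2(1-\gamma\lambda)((1-\alpha)^2 - \gamma\lambda(1+\alpha)^2)$. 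Defining $\rho_{\pm} = -v_{\mp}$ then yields precisely \eqref{eq:rho_snag_def}. Non-negativity of both $\rho_{\pm}$ follows because the hypothesis $(\frac{1-\alpha}{1+\alpha})^2 \geq \gamma\lambda$ is exactly the statement $(1-\alpha)^2 - \gamma\lambda(1+\alpha)^2 \geq 0$, so the radicand is real, and, as in the SGDM case, one checks $1-3q \geq \sqrt{(1-3q)^2 - (1-p+q)(1+p+q)}$ by verifying that $(1-p+q)(1+p+q) \geq 0$. This discriminant identity is the main obstacle, being a slightly heavier expansion than \eqref{eq:sqrt_positive} because of the extra $(1-\gamma\lambda)$ factors.

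With the roots in hand I would finish exactly as before: the integrand factors as $\frac{2}{1-p+q}\cdot\frac{1+u^2}{(u^2+\rho_+)(u^2+\rho_-)}$, and partial fractions give coefficients $A = \frac{\rho_+ - 1}{\rho_+ - \rho_-}$, $B = \frac{1-\rho_-}{\rho_+ - \rho_-}$. Integrating via $\int \frac{dt}{t^2+m^2} = \frac{1}{m}\arctan(t/m)$ and undoing $u = \tan(x/2)$ produces the bracketed arctangent expression of \eqref{eq:integral_expression2}. The prefactor then simplifies using the two identities $(1+\xi^2+\chi^2)(1-p+q) = \gamma^2\lambda^2$, immediate from the form of $1-p+q$ in \eqref{eq:1_minus_p_plus_q_SNAG}, and $\rho_+ - \rho_- = \frac{4(1-\alpha(1-\gamma\lambda))}{\gamma^2\lambda^2}\sqrt{(1-\gamma\lambda)((1-\alpha)^2 - \gamma\lambda(1+\alpha)^2)}$, whose product with $\pi(1+\xi^2+\chi^2)(1-p+q)$ gives the claimed denominator $4\pi(1-\alpha(1-\gamma\lambda))\sqrt{(1-\gamma\lambda)((1-\alpha)^2 - \gamma\lambda(1+\alpha)^2)}$. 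Everything outside the second paragraph is a faithful transcription of the SGDM argument with the new $\xi,\chi$.
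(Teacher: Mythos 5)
Your proposal is correct and follows essentially the same route as the paper's own proof, which likewise runs the Proposition~\ref{prop:integral} argument verbatim with $\xi = (1+\alpha)(1-\gamma\lambda)$, $\chi = \alpha(1-\gamma\lambda)$, invokes Lemma~\ref{lem:1_minus_p_plus_q}(ii), and reduces everything to the discriminant identity you predict, namely $4(1-\gamma\lambda)(1-\alpha(1-\gamma\lambda))^2((1-\alpha)^2 - \gamma\lambda(1+\alpha)^2)$, together with the two prefactor identities $(1+\xi^2+\chi^2)(1-p+q) = \gamma^2\lambda^2$ and $\rho_{+}-\rho_{-} = \frac{4(1-\alpha(1-\gamma\lambda))}{\gamma^2\lambda^2}\sqrt{(1-\gamma\lambda)((1-\alpha)^2-\gamma\lambda(1+\alpha)^2)}$. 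Even your non-negativity check for $\rho_{\pm}$ via $(1-p+q)(1+p+q)\geq 0$ matches the paper's argument exactly.
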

\begin{proof}
The proof is analogous to the one of Proposition \ref{prop:integral}. In this case we define $\xi = (1 + \alpha) (1 - \gamma \lambda)$ and $\chi = \alpha (1 - \gamma \lambda)$, and $p,q$ are defined in terms of $\xi$, $\chi$ in the same way: $p = \frac{2 \xi(1 + \chi)}{1+\xi^2+\chi^2}$, $q = \frac{2 \chi}{1+\xi^2+\chi^2}$. The proof is exactly the same until equation \eqref{eq:1_minus_p_plus_q_SGDM}. In this case, Lemma \ref{lem:1_minus_p_plus_q}(ii) shows that we have instead:
\begin{align} 
\begin{split} \label{eq:1_minus_p_plus_q_SNAG}
    1 - p + q &= \frac{\gamma^2 \lambda^2}{1+(1 + \alpha)^2 (1 - \gamma \lambda)^2+\alpha^2 (1 - \gamma \lambda)^2} \\
     1+p+q &= \frac{2(1 + \alpha)^2 (1 - \gamma \lambda)^2 + 2 (\alpha(1-\gamma \lambda) + 1)^2
    - \gamma^2 \lambda^2}{1+(1 + \alpha)^2 (1 - \gamma \lambda)^2+\alpha^2 (1 - \gamma \lambda)^2} \\
    1-3q &= \frac{
    1+(1 + \alpha)^2 (1 - \gamma \lambda)^2+\alpha^2 (1 - \gamma \lambda)^2 - 6 \alpha (1-\gamma \lambda)}{1+(1 + \alpha)^2 (1 - \gamma \lambda)^2+\alpha^2 (1 - \gamma \lambda)^2}.
\end{split}
\end{align}
Using these expressions, we get
\begin{align}
\begin{split} \label{eq:sqrt_positive2}
    &(1+(1 + \alpha)^2 (1 - \gamma \lambda)^2+\alpha^2 (1 - \gamma \lambda)^2)^2 ((1 -3q)^2 - (1 - p + q)(1 + p + q)) \\ &= (1+(1 + \alpha)^2 (1 - \gamma \lambda)^2+\alpha^2 (1 - \gamma \lambda)^2 - 6 \alpha (1-\gamma \lambda))^2 \\ &- \gamma^2 \lambda^2 (2(1 + \alpha)^2 (1 - \gamma \lambda)^2 + 2 (\alpha(1-\gamma \lambda) + 1)^2
    - \gamma^2 \lambda^2) \\ &= 
    4(\gamma \lambda - 1)(\alpha \gamma \lambda - \alpha + 1)^2 (2 \alpha + \gamma \lambda - \alpha^2 + 2 \alpha \gamma \lambda + \alpha^2 \gamma \lambda - 1) 
    \\ &= 4 (1-\gamma \lambda) (1 - \alpha(1 - \gamma \lambda))^2 ((1-\alpha)^2 - \gamma \lambda (1+\alpha)^2) \geq 0
\end{split}
\end{align}
For the inequality, we used the assumption that $(\frac{1-\alpha}{1+\alpha})^2 \geq \gamma \lambda$. Using equations \eqref{eq:1_minus_p_plus_q_SNAG} and \eqref{eq:sqrt_positive2}, if we define $\rho_{\pm} = -v_{\mp}$, we obtain \eqref{eq:rho_snag_def}.
Note that $\rho_{\pm}$ are both positive because $1-3q \geq \sqrt{(1-3q)^2 - (1-p+q)(1+p+q)}$, since 
\begin{align}
    (1-p+q)(1+p+q) &= \frac{\gamma^2 \lambda^2(2(1 + \alpha)^2 (1 - \gamma \lambda)^2 + 2 (\alpha(1-\gamma \lambda) + 1)^2
    - \gamma^2 \lambda^2)}{(1+(1 + \alpha)^2 (1 - \gamma \lambda)^2+\alpha^2 (1 - \gamma \lambda)^2)^2} \geq 0
\end{align}
The inequality holds because $(\frac{1-\alpha}{1+\alpha})^2 \geq \gamma \lambda$ implies that $1-\gamma \lambda \geq 0$, which implies that $2 (\alpha(1-\gamma \lambda) + 1)^2 - \gamma^2 \lambda^2 \geq 0$. Using the same argument as in Proposition \ref{prop:integral}, we obtain
\begin{align}
    &\frac{\left[ \frac{\rho_{+} - 1}{\sqrt{\rho_{+}}} \arctan \left(\frac{\tan(\frac{x}{2})}{\sqrt{\rho_{+}}} \right) + \frac{1 - \rho_{-}}{\sqrt{\rho_{-}}} \arctan \left(\frac{\tan(\frac{x}{2})}{\sqrt{\rho_{-}}} \right) \right]_{-\pi(1-2\epsilon)}^{\pi(1-2\epsilon)}}{\pi(1+\xi^2+\chi^2)(1-p+q)(\rho_{+} - \rho_{-})}  
    \\ &= \frac{\left[ \frac{\rho_{+} - 1}{\sqrt{\rho_{+}}} \arctan \left(\frac{\tan(\frac{x}{2})}{\sqrt{\rho_{+}}} \right) + \frac{1 - \rho_{-}}{\sqrt{\rho_{-}}} \arctan \left(\frac{\tan(\frac{x}{2})}{\sqrt{\rho_{-}}} \right) \right]_{-\pi(1-2\epsilon)}^{\pi(1-2\epsilon)}}{4 \pi (1 - \alpha(1 - \gamma \lambda)) \sqrt{(1-\gamma \lambda)((1-\alpha)^2 - \gamma \lambda (1+\alpha)^2)}}
\end{align}
The equality holds because $(1+\xi^2+\chi^2)(1-p+q) = \gamma^2 \lambda^2$, and $\rho_{+} - \rho_{-} = \frac{4 (1 - \alpha(1 - \gamma \lambda))}{\gamma^2 \lambda^2} \sqrt{(1-\gamma \lambda)((1-\alpha)^2 - \gamma \lambda (1+\alpha)^2)}$.
\end{proof}

\begin{corollary} \label{cor:variance_SNAG}
    If $(\frac{1-\alpha}{1+\alpha})^2 \geq \gamma \lambda_i$ for all $i \in \{1,\dots,d\}$, the variance for the SNAG iterates with replacement under the zero-th order noise model is
\begin{align} \label{eq:variance_snag_replace}
    \mathrm{Tr}[\mathrm{Var}(x_k)] = \gamma^2 \sum_{i=1}^{d} (u_i^{\top} \Sigma u_i) \frac{\sqrt{\rho_{+}^{i}} - \sqrt{\rho_{-}^{i}} - \frac{1}{\sqrt{\rho_{+}^{i}}} + \frac{1}{\sqrt{\rho_{-}^{i}}}}{4 (1 - \alpha(1 - \gamma \lambda_i)) \sqrt{(1-\gamma \lambda_i)((1-\alpha)^2 - \gamma \lambda_i (1+\alpha)^2)}},
\end{align}
where we defined $\rho_{\pm}^{i}$ as $\rho_{\pm}$ in \eqref{eq:rho_sgdm_def} with the choice $\lambda = \lambda_i$. In the limit $\gamma \lambda_i \ll 1-\alpha$ for all $i \in \{1,\dots,d\}$, we obtain the same variance as in the limit for SGDM:
\begin{align} \label{eq:replacement_limit_snag}
    \mathrm{Tr}[\mathrm{Var}(x_k)] = \sum_{i=1}^{d} \frac{\gamma(u_i^{\top} \Sigma u_i)}{2(1-\alpha)\lambda_i} + O\bigg(\sqrt{\frac{\gamma^3}{\lambda_i}} \bigg).
\end{align}
\end{corollary}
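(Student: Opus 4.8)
The plan is to mirror, step for step, the derivation that produced the SGDM expression \eqref{eq:variance_sgdm_replace}, substituting the SNAG transfer function wherever the SGDM one appeared, and then to invoke \autoref{prop:integral2} in place of \autoref{prop:integral}. First I would observe that the SNAG transfer function $H(z) = -(z^2 \mathrm{Id} - z(1+\alpha)(\mathrm{Id} - \gamma A) + \alpha(\mathrm{Id}-\gamma A))^{-1} z\gamma\mathrm{Id}$ is, like its SGDM counterpart, a matrix function of $A$ alone and therefore diagonalizes in the eigenbasis $U$ of $A$. Carrying out the same trace manipulation as in \eqref{eq:tr_decomposition} then yields
\begin{align}
    \mathrm{Tr}[\mathrm{Var}(x_k)] = \gamma^2 \sum_{i=1}^{d} (u_i^{\top}\Sigma u_i) \int_0^1 |e^{-4\pi i f} - e^{-2\pi i f}(1+\alpha)(1-\gamma\lambda_i) + \alpha(1-\gamma\lambda_i)|^{-2}\, df,
\end{align}
which is exactly the integral evaluated in \autoref{prop:integral2} with $\lambda = \lambda_i$. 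The hypothesis $(\frac{1-\alpha}{1+\alpha})^2 \geq \gamma\lambda_i$ is precisely what \autoref{prop:integral2} requires to keep the radicand nonnegative, so the proposition applies termwise.

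Next I would pass to the limits $\epsilon_0 \to 0$, $\epsilon_1 \to 1$ in the antiderivative \eqref{eq:integral_expression2}. Since $\lim_{x\to\pm\pi}\arctan(\tan(x/2)/\sqrt{\rho}) = \pm\tfrac{\pi}{2}$ for any $\rho > 0$, each arctangent term jumps by a net $\pi$, the explicit factor $\pi$ cancels against the $\pi$ in the denominator of \eqref{eq:integral_expression2}, and the numerator collapses to $\frac{\rho_+-1}{\sqrt{\rho_+}} + \frac{1-\rho_-}{\sqrt{\rho_-}} = \sqrt{\rho_+} - \tfrac{1}{\sqrt{\rho_+}} - \sqrt{\rho_-} + \tfrac{1}{\sqrt{\rho_-}}$. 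Reinstating the $\gamma^2(u_i^{\top}\Sigma u_i)$ prefactor and the SNAG denominator $4(1-\alpha(1-\gamma\lambda_i))\sqrt{(1-\gamma\lambda_i)((1-\alpha)^2 - \gamma\lambda_i(1+\alpha)^2)}$ produces precisely \eqref{eq:variance_snag_replace}. This portion is entirely mechanical once \autoref{prop:integral2} is available.

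For the limiting regime $\gamma\lambda_i \ll 1-\alpha$ I would replay the asymptotic analysis of \autoref{lem:approx_sgdm} on the SNAG radicals \eqref{eq:rho_snag_def}. Writing $t = \gamma\lambda_i$ and Taylor expanding both the polynomial part of the numerator and the square root $\sqrt{(1-t)((1-\alpha)^2 - t(1+\alpha)^2)}$ about $t=0$, I expect the $\rho_+$ branch to behave as $\frac{4(1-\alpha)^2}{\gamma^2\lambda_i^2} + O(\frac{1}{\gamma\lambda_i})$, so that $\sqrt{\rho_+} = \frac{2(1-\alpha)}{\gamma\lambda_i} + O(\frac{1}{\sqrt{\gamma\lambda_i}})$, while the SNAG denominator tends to $4(1-\alpha)^2$ since $1-\alpha(1-\gamma\lambda_i)\to 1-\alpha$ and the square root tends to $(1-\alpha)$. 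Substituting these into \eqref{eq:variance_snag_replace} reproduces the leading term $\frac{\gamma(u_i^{\top}\Sigma u_i)}{2(1-\alpha)\lambda_i}$, with the $O(\sqrt{\gamma^3/\lambda_i})$ error originating from the correction to $\sqrt{\rho_+}$ (the remaining terms $-\sqrt{\rho_-} + 1/\sqrt{\rho_-} - 1/\sqrt{\rho_+}$ are $O(1)$ and thus contribute only $O(\gamma^2)$, which is absorbed since $\gamma\lambda_i\lesssim 1$). The main obstacle is the $\rho_-$ branch: at first order in $t$ the polynomial numerator and the square root term cancel exactly, so one must push the expansion to second order to extract $\rho_- = (\tfrac{1+\alpha}{1-\alpha})^2 + O(\gamma\lambda_i)$. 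This is the same delicate cancellation seen in the SGDM computation of \autoref{lem:approx_sgdm}, now complicated by the extra $(1-\gamma\lambda_i)$ factors, and verifying it accounts for essentially all of the remaining work.
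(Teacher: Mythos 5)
Your proposal is correct and follows essentially the same route as the paper: the closed form \eqref{eq:variance_snag_replace} is obtained, exactly as for SGDM, by diagonalizing the SNAG transfer function in the eigenbasis of $A$ and letting the arctangents in \eqref{eq:integral_expression2} tend to $\pm\frac{\pi}{2}$, and the limit \eqref{eq:replacement_limit_snag} is obtained by an expansion analogous to \autoref{lem:approx_sgdm}, including the second-order expansion of the square root needed to resolve the leading-order cancellation in the $\rho_{-}$ branch that you correctly flag as the delicate step. The only stylistic difference is that the paper expands through the parameters $p,q$ of \autoref{lem:1_minus_p_plus_q}(ii) rather than directly through the closed form \eqref{eq:rho_snag_def}; these are the same computation. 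One point where you actually improve on the paper: you assert $\rho_{-} = \big(\frac{1+\alpha}{1-\alpha}\big)^{2} + O(\gamma\lambda_i)$, whereas the paper's proof arrives at $\frac{1}{2}\big(\frac{1+\alpha}{1-\alpha}\big)^{2} + O(\gamma\lambda)$. Your constant is the correct one: writing the three quantities over their common denominator $2(1+\alpha+\alpha^2)$, the second-order expansion gives $\rho_{-} = \frac{1+p+q}{2(1-3q)} + O(\gamma\lambda) \to \frac{(1+\alpha)^2}{(1-\alpha)^2}$, and this must agree with the SGDM constant since at $\gamma\lambda = 0$ the SNAG parameters $\xi,\chi$ coincide with the SGDM ones; the paper's extra factor $\frac12$ comes from normalizing $\rho_-$ by $\gamma^2\lambda^2$ instead of by $1-p+q$ while having dropped the common denominator in the numerator. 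The slip is immaterial to \eqref{eq:replacement_limit_snag}, for exactly the reason you give: $\rho_{-}$ and $1/\sqrt{\rho_{+}}$ contribute only $O(1)$ to the bracket, hence $O(\gamma^2)$ to the variance, which is absorbed into $O\big(\sqrt{\gamma^3/\lambda_i}\big)$ whenever $\gamma\lambda_i \leq 1$.
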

\begin{proof}
    Equation \eqref{eq:variance_snag_replace} is a direct consequence of \eqref{eq:integral_expression2}, following the same reasoning as for SGDM. To study the limit $\gamma \lambda_i \ll 1-\alpha$, we use an analogous argument to Lemma \ref{lem:approx_sgdm}. We have that $\xi = (1 + \alpha) (1 - \gamma \lambda) = 1 + \alpha + O(\gamma \lambda)$, $\chi = \alpha (1 - \gamma \lambda) = \alpha + O(\gamma \lambda)$, and
\begin{align}
    p &= \frac{2 \xi(1+ \chi)}{1+\xi^2+\chi^2} =  \frac{2 (1+\alpha)^2 + O(\gamma \lambda)}{1+(1+\alpha)^2+\alpha^2 + O(\gamma \lambda)} 
    = \frac{1 + 2\alpha + \alpha^2}{1 + \alpha + \alpha^2} + O(\gamma \lambda), \\ q &= \frac{2 \chi}{1+\xi^2+\chi^2} = \frac{2 \alpha + O(\gamma \lambda)}{1+(1+\alpha)^2+\alpha^2 + O(\gamma \lambda)} = \frac{\alpha}{1 + \alpha + \alpha^2} + O(\gamma \lambda),
\end{align}
which means that
\begin{align}
    1-3q &= \frac{1 + \alpha + \alpha^2 -3\alpha}{1 + \alpha + \alpha^2} + O(\gamma \lambda) 
    = \frac{(1-\alpha)^2}{1 + \alpha + \alpha^2} + O(\gamma \lambda). \\
    1-p+q &= \frac{\gamma^2 \lambda^2}{1+(1 + \alpha)^2 +\alpha^2 +  O(\gamma \lambda)} = \frac{\gamma^2 \lambda^2}{2(1 + \alpha + \alpha^2)} +  O(\gamma^3 \lambda^3), \\
    1+p+q &= 1+\frac{1 + 2\alpha + \alpha^2}{1 + \alpha + \alpha^2} + \frac{\alpha}{1 + \alpha + \alpha^2} +  O(\gamma \lambda) = \frac{2 + 4\alpha + 2\alpha^2}{1 + \alpha + \alpha^2} +  O(\gamma \lambda).
\end{align}
\begin{align}
    &(1 + \alpha + \alpha^2)^2 ((1-3q)^2 - (1-p+q)(1+p+q)) \\ &= (1-\alpha)^4 - \frac{\gamma^2 \lambda^2}{2} (2 + 4\alpha + 2\alpha^2) +  O(\gamma \lambda) = (1-\alpha)^4 +  O(\gamma \lambda). 
\end{align}
\begin{align}
    \rho_{\pm} &= \frac{1-3q \pm \sqrt{(1 -3q)^2 - (1 - p + q)(1 + p + q)}}{1 - p + q} = \frac{(1-\alpha)^2 \pm \sqrt{(1-\alpha)^4 +  O(\gamma \lambda)}}{\frac{1}{2}\gamma^2 \lambda^2 +  O(\gamma^3 \lambda^3)} \\ &= 
    \begin{cases}
    \frac{4(1-\alpha)^2}{\gamma^2 \lambda^2} + O\left(\frac{1}{\gamma \lambda}\right) \\
    O\left(\frac{1}{\gamma \lambda}\right).
    \end{cases}
\end{align}
This does not yield the largest-order term of $\rho_{-}$. To obtain the largest-order term of $\rho_{-}$, we write
\begin{align}
    &\sqrt{(1 -3q)^2 - (1 - p + q)(1 + p + q)} = (1 -3q) \sqrt{1 - \frac{(1 - p + q)(1 + p + q)}{(1 -3q)^2}} \\ &= (1 -3q) \bigg(1 - \frac{(1 - p + q)(1 + p + q)}{2(1 -3q)^2} + O(\gamma^4 \lambda^4) \bigg) = 1 -3q - \frac{(1 - p + q)(1 + p + q)}{2(1 -3q)} + O(\gamma^4 \lambda^4) \\ &= 1 -3q - \frac{(\frac{1}{2}\gamma^2 \lambda^2 + O(\gamma^3 \lambda^3))(2(1+\alpha)^2+O(\gamma \lambda))}{2(1-\alpha)^2 + O(\gamma \lambda)} + O(\gamma^4 \lambda^4) \\ &= 1 -3q - \frac{\gamma^2 \lambda^2 (1+\alpha)^2}{2(1-\alpha)^2} + O(\gamma^3 \lambda^3).
\end{align}
Hence,
\begin{align}
    \rho_{-} = \frac{1-3q - \left( 1 -3q - \frac{\gamma^2 \lambda^2 (1+\alpha)^2}{2(1-\alpha)^2} + O(\gamma^3 \lambda^3) \right)}{\gamma^2 \lambda^2} = \frac{1}{2}\left( \frac{1+\alpha}{1-\alpha} \right)^2 + O(\gamma \lambda).
\end{align}
Taking square roots, we get 
\begin{align}
    \sqrt{\rho_{+}} = \frac{2(1-\alpha)}{\gamma \lambda} + O\left( \frac{1}{\sqrt{\gamma \lambda}} \right), \qquad \sqrt{\rho_{-}} = \frac{1+\alpha}{\sqrt{2}(1-\alpha)} + O(\gamma \lambda)
\end{align}
We conclude the proof:
\begin{align}
    4 (1 - \alpha(1 - \gamma \lambda_i)) \sqrt{(1-\gamma \lambda_i)((1-\alpha)^2 - \gamma \lambda_i (1+\alpha)^2)} = 4 (1 - \alpha)^2
\end{align}
\begin{align} \label{eq:variance_snag_replace_limit}
    \mathrm{Tr}[\mathrm{Var}(x_k)] = \sum_{i=1}^{d} \gamma^2  (u_i^{\top} \Sigma u_i) \frac{\frac{2(1-\alpha)}{\gamma \lambda}}{4 (1 - \alpha)^2} = \sum_{i=1}^{d} \frac{\gamma(u_i^{\top} \Sigma u_i)}{2(1-\alpha)\lambda_i} + O\bigg(\sqrt{\frac{\gamma^3}{\lambda_i}} \bigg).
\end{align}
\end{proof}

\section{Proofs of \autoref{sec:SO}} \label{app:proofs_sec_SO}
\begin{proposition} \label{prop:variance_S_x_SO}
When $R_b(k) = ((1+\frac{1}{n-1}) \mathds{1}_{k \equiv 0 \, (\mathrm{mod} \, n)} - \frac{1}{n-1}) \Sigma$, the process $x = h * b$ satisfies
\begin{align}
    \mathbb{E}[\|x_{k}\|^2] = \frac{1}{n-1} \sum_{k'=1}^{n-1} \mathrm{Tr}[\Sigma H(e^{\frac{2\pi i k'}{n}})^{\top} H(e^{-\frac{2\pi i k'}{n}})] 
\end{align}
\end{proposition}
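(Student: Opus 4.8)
The plan is to invoke \autoref{lem:variance_S_x}, which is tailored to exactly this situation because the autocorrelation $R_b$ here is not absolutely summable. Writing $R_b(k) = c_k \Sigma$ with the scalar coefficients $c_k = (1+\frac{1}{n-1})\mathds{1}_{k \equiv 0 \,(\mathrm{mod}\, n)} - \frac{1}{n-1}$, and abbreviating $g(f) := \mathrm{Tr}[\Sigma H(e^{2\pi i f})^{\top} H(e^{-2\pi i f})]$, the lemma gives
\begin{align}
    \mathbb{E}[\|x_k\|^2] = \sum_{k=-\infty}^{\infty} c_k \int_{0}^{1} g(f) \, e^{-2\pi i k f} \, df = \sum_{k=-\infty}^{\infty} c_k \, a_k, \qquad a_k := \int_{0}^{1} g(f) e^{-2\pi i k f} \, df,
\end{align}
where the $a_k$ are the Fourier coefficients of $g$. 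The first thing I would record is that this object is well behaved: under the standing assumptions the transfer function $f \mapsto H(e^{2\pi i f})$ is continuously differentiable on the unit circle — this is exactly the content of \autoref{lem:eigenvalues_not_zero}, which shows the relevant matrix is invertible for $|z| = 1$ so that $H$ has no pole there — hence $g$ is continuously differentiable, and by \autoref{lem:continuously_differentiable} the sequence $(a_k)_k$ is absolutely summable. Since $(c_k)_k$ is bounded, $\sum_k c_k a_k$ converges absolutely and may be rearranged freely.

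Second, I would split along the two pieces of $c_k$. Using $1 + \frac{1}{n-1} = \frac{n}{n-1}$,
\begin{align}
    \sum_{k=-\infty}^{\infty} c_k a_k = \frac{n}{n-1} \sum_{m=-\infty}^{\infty} a_{nm} - \frac{1}{n-1} \sum_{k=-\infty}^{\infty} a_k.
\end{align}
Both sums are evaluated from the (absolutely convergent) Fourier reconstruction $g(f) = \sum_{k} a_k e^{2\pi i k f}$. Taking $f = 0$ yields $\sum_k a_k = g(0)$. For the sampled sum I would apply the aliasing identity coming from the orthogonality relation $\frac{1}{n}\sum_{k'=0}^{n-1} e^{2\pi i k k'/n} = \mathds{1}_{k \equiv 0 \,(\mathrm{mod}\, n)}$, namely
\begin{align}
    \sum_{m=-\infty}^{\infty} a_{nm} = \frac{1}{n} \sum_{k'=0}^{n-1} g\!\left(\tfrac{k'}{n}\right),
\end{align}
whose interchange of summations is legitimate precisely because $(a_k)_k$ is absolutely summable.

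Finally, substituting these two evaluations and collecting terms,
\begin{align}
    \mathbb{E}[\|x_k\|^2] = \frac{1}{n-1} \sum_{k'=0}^{n-1} g\!\left(\tfrac{k'}{n}\right) - \frac{1}{n-1} g(0) = \frac{1}{n-1} \sum_{k'=1}^{n-1} g\!\left(\tfrac{k'}{n}\right),
\end{align}
where the $k'=0$ term cancels against the second contribution. Recalling the definition of $g$, the right-hand side is exactly $\frac{1}{n-1}\sum_{k'=1}^{n-1} \mathrm{Tr}[\Sigma H(e^{2\pi i k'/n})^{\top} H(e^{-2\pi i k'/n})]$, which is the claim. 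The only genuinely delicate point is the very first one: guaranteeing enough regularity of $g$ on the circle so that $(a_k)_k$ is summable, which is what legitimizes both the Fourier inversion at $f=0$ and the aliasing identity at the $n$-th roots of unity. Everything after that is orthogonality of roots of unity and an algebraic cancellation.
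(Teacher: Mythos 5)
Your proof is correct and takes essentially the same route as the paper: both invoke \autoref{lem:variance_S_x}, split $R_b(k)$ into the $n$-periodic indicator piece and the constant piece, and evaluate these respectively as the average of $g(f) = \mathrm{Tr}[\Sigma H(e^{2\pi i f})^{\top} H(e^{-2\pi i f})]$ over the $n$-th roots of unity and as $g(0)$, with the $k'=0$ term cancelling. The only cosmetic difference is that you obtain the aliasing identity $\sum_m a_{nm} = \frac{1}{n}\sum_{k'=0}^{n-1} g(k'/n)$ from the orthogonality relation $\frac{1}{n}\sum_{k'=0}^{n-1} e^{2\pi i k k'/n} = \mathds{1}_{k \equiv 0 \,(\mathrm{mod}\, n)}$ on the coefficient side, whereas the paper changes variables in the integral and splits $[0,n]$ into unit intervals; you also make explicit, via \autoref{lem:eigenvalues_not_zero} and \autoref{lem:continuously_differentiable}, the absolute summability of the Fourier coefficients that legitimizes the interchanges the paper's proof performs implicitly.
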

\begin{proof}
We use \autoref{lem:variance_S_x}. We rewrite $R_b(k) = \frac{n}{n-1} \mathds{1}_{k \equiv 0 \, (\mathrm{mod} \, n)} \Sigma - \frac{1}{n-1} \Sigma$, and treat the two terms independently. On the one hand, we have that 
\begin{align}
&\sum_{k=-\infty}^{\infty} \int_{0}^{1} \mathrm{Tr}[\Sigma H(e^{2\pi i f})^{\top} H(e^{-2\pi i f})] \, e^{-2\pi i k f} \, df \\ &= \sum_{k=-\infty}^{\infty} \langle \mathrm{Tr}[\Sigma H(e^{2\pi i f})^{\top} H(e^{-2\pi i f})], e^{-2\pi i k f} \rangle_{L^2([0,1])} \\ &= \bigg(\sum_{k=-\infty}^{\infty} \langle \mathrm{Tr}[\Sigma H(e^{2\pi i f})^{\top} H(e^{-2\pi i f})], e^{-2\pi i k f} \rangle_{L^2([0,1])} e^{2\pi i k f'} \bigg) \bigg|_{f' = 0} \\ &= (\mathrm{Tr}[\Sigma H(e^{2\pi i f})^{\top} H(e^{-2\pi i f})])|_{f' = 0} = \mathrm{Tr}[\Sigma H(0)^{\top} H(0)],
\end{align}
and on the other hand
\begin{align} \label{eq:mathds_transform}
&\sum_{k=-\infty}^{\infty} \int_{0}^{1} \mathrm{Tr}[\mathds{1}_{k \equiv 0 \, (\mathrm{mod} \, n)} \Sigma H(e^{2\pi i f})^{\top} H(e^{-2\pi i f})] \, e^{-2\pi i k f} \, df \\ &= \sum_{k=-\infty}^{\infty} \int_{0}^{1} \mathrm{Tr}[\Sigma H(e^{2\pi i f})^{\top} H(e^{-2\pi i f})] \, e^{-2\pi i n k f} \, df
\\ &= \frac{1}{n} \sum_{k=-\infty}^{\infty} \int_{0}^{n} \mathrm{Tr}[\Sigma H(e^{\frac{2\pi i f'}{n}})^{\top} H(e^{-\frac{2\pi i f'}{n}})] \, e^{-2\pi i k f'} \, df'
\\ &= \frac{1}{n} \sum_{k=-\infty}^{\infty} \sum_{j=0}^{n-1} \int_{j}^{j+1} \mathrm{Tr}[\Sigma H(e^{\frac{2\pi i f'}{n}})^{\top} H(e^{-\frac{2\pi i f'}{n}})] \, e^{-2\pi i k f'} \, df' \\ &= \frac{1}{n} \sum_{k=-\infty}^{\infty} \sum_{k'=0}^{n-1} \int_{0}^{1} \mathrm{Tr}[\Sigma H(e^{\frac{2\pi i (k'+f)}{n}})^{\top} H(e^{-\frac{2\pi i (k'+f)}{n}})] \, e^{-2\pi i k (k'+f)} \, df
\\ &= \frac{1}{n} \sum_{k'=0}^{n-1} \sum_{k=-\infty}^{\infty} \int_{0}^{1} \mathrm{Tr}[\Sigma H(e^{\frac{2\pi i (k'+f)}{n}})^{\top} H(e^{-\frac{2\pi i (k'+f)}{n}})] \, e^{-2\pi i k f} \, df
\\ &= \frac{1}{n} \sum_{k'=0}^{n-1} \sum_{k=-\infty}^{\infty} \langle \mathrm{Tr}[\Sigma H(e^{\frac{2\pi i (k'+f)}{n}})^{\top} H(e^{-\frac{2\pi i (k'+f)}{n}})], e^{-2\pi i k f} \rangle_{L^2([0,1])} \\ &= \frac{1}{n} \sum_{k'=0}^{n-1} \mathrm{Tr}[\Sigma H(e^{\frac{2\pi i k'}{n}})^{\top} H(e^{-\frac{2\pi i k'}{n}})].
\end{align}
Hence,
\begin{align}
\begin{split}
    \mathbb{E}[\|x_{k}\|^2] &= \frac{n}{n-1} \bigg( \frac{1}{n} \sum_{k'=0}^{n-1} \mathrm{Tr}[\Sigma H(e^{\frac{2\pi i k'}{n}})^{\top} H(e^{-\frac{2\pi i k'}{n}})] \bigg) - \frac{1}{n-1} \mathrm{Tr}[\Sigma H(0)^{\top} H(0)] \\ &= \frac{1}{n-1} \sum_{k'=1}^{n-1} \mathrm{Tr}[\Sigma H(e^{\frac{2\pi i k'}{n}})^{\top} H(e^{-\frac{2\pi i k'}{n}})].
\end{split}    
\end{align}
\end{proof}

\textbf{\textit{Proof of \autoref{prop:variance_sgdm_so}.}}
For $\rho_{+} \geq \rho_{-} > 0$, define the function
\begin{align} \label{eq:g_def}
g(x) = \frac{\rho_{+} - 1}{\sqrt{\rho_{+}}} \arctan\bigg(\frac{\tan\left(\frac{x}{2} \right)}{\sqrt{\rho_{+}}} \bigg) - \frac{\rho_{-} - 1}{\sqrt{\rho_{-}}} \arctan\bigg(\frac{\tan\left(\frac{x}{2} \right)}{\sqrt{\rho_{-}}} \bigg)
\end{align}
has derivative 
\begin{align} \label{eq:g'_def}
    g'(x) &= \frac{\rho_{+} - 1}{\sqrt{\rho_{+}}} \frac{1}{1 + \frac{\tan^2(\frac{x}{2})}{\rho_{+}}} \cdot \frac{1}{\sqrt{\rho_{+}} \cos^2(\frac{x}{2})} \cdot \frac{1}{2} - \frac{\rho_{-} - 1}{\sqrt{\rho_{-}}} \frac{1}{1 + \frac{\tan^2(\frac{x}{2})}{\rho_{-}}} \cdot \frac{1}{\sqrt{\rho_{-}} \cos^2(\frac{x}{2})} \cdot \frac{1}{2} \\ &= \frac{\rho_{+}-1}{2 \rho_{+} \cos^2(\frac{x}{2}) + 2 \sin^2(\frac{x}{2})} - \frac{\rho_{-} - 1}{2 \rho_{-} \cos^2(\frac{x}{2}) + 2 \sin^2(\frac{x}{2})} \\ &= \frac{\rho_{+}-1}{2 + 2 (\rho_{+} - 1) \cos^2(\frac{x}{2})} - \frac{\rho_{-}-1}{2 + 2 (\rho_{-} - 1) \cos^2(\frac{x}{2})}.
\end{align}
Since $z \mapsto \frac{z}{2 + 2 z \cos^2(\frac{x}{2})}$ is increasing for $z \geq 0$, the inequality $\rho_{+} - 1 \geq \rho_{-} - 1$ implies that the right-hand side is non-negative, i.e. $g'(x) \geq 0$ for all $x \in (-\pi,\pi)$.
And the second derivative is:
\begin{align} \label{eq:g_second}
    &g''(x) = \frac{2 (\rho_{+}-1)^2
    \cos(\frac{x}{2}) \sin(\frac{x}{2})}{(2 + 2 (\rho_{+} - 1) \cos^2(\frac{x}{2}))^2} - \frac{2 (\rho_{-}-1)^2
    \cos(\frac{x}{2}) \sin(\frac{x}{2})}{(2 + 2 (\rho_{-} - 1) \cos^2(\frac{x}{2}))^2} \\ &= \frac{(\rho_{+}-1)^2
    \sin(x)}{(2 + 2 (\rho_{+} - 1) \cos^2(\frac{x}{2}))^2} - \frac{(\rho_{-}-1)^2
    \sin(x)}{(2 + 2 (\rho_{-} - 1) \cos^2(\frac{x}{2}))^2}
\end{align}
Note that for any $x \in [0,\pi)$, $z \mapsto \frac{z \sqrt{\sin(x)}}{2 + 2 z \cos^2(\frac{x}{2})}$ is increasing for $z \geq 0$. 
Hence, $z \mapsto \frac{z^2 \sin(x)}{(2 + 2 z \cos^2(\frac{x}{2}))^2}$ is increasing for $z \geq 0$, which 
through the inequality $\rho_{+} - 1 \geq \rho_{-} - 1$ implies that the right-hand side of \eqref{eq:g_second} is greater or equal than zero. Hence, $g''$ is non-negative on $[0,\pi]$, which means that $g'$ is non-decreasing on $[0,\pi)$, and by symmetry, non-increasing on $(-\pi,0]$. 

Note that
\begin{align}
\begin{split} \label{eq:sum_g'}
    &\left[ \frac{\rho_{+} - 1}{\sqrt{\rho_{+}}} \arctan \left(\frac{\tan(\frac{x}{2})}{\sqrt{\rho_{+}}} \right) + \frac{1 - \rho_{-}}{\sqrt{\rho_{-}}} \arctan \left(\frac{\tan(\frac{x}{2})}{\sqrt{\rho_{-}}} \right) \right]_{-\pi(1-1/n)}^{\pi(1-1/n)} 
    \\ &= 
    g(\pi(1-1/n)) - g(-\pi(1-1/n)) \\ &= 
    \int_{\pi(1-2/n)}^{\pi(1-1/n)} g'(x) \, dx + \int_{-\pi(1-1/n)}^{-\pi(1-2/n)} g'(x) \, dx + \sum_{k=1}^{n-2} \int_{\pi(1-2(k+1)/n)}^{\pi(1-2k/n)} g'(x) \, dx \\ &\geq \frac{\pi}{n} g'(\pi(1-2/n)) + \frac{\pi}{n} g'(-\pi(1-2/n)) + \frac{2\pi}{n} g'(0) + \frac{2\pi}{n} \sum_{k=2}^{n-2} g'(\pi(1-2k/n)) \\ &\geq \frac{\pi}{n} \sum_{k=1}^{n-1} g'(\pi(1-2k/n)).
\end{split}
\end{align}
Let $\rho_{\pm}$ be as in \eqref{eq:rho_sgdm_def}. Applying \autoref{lem:g'_equality} and using the short-hand $Z = 4 \pi (1-\alpha) \sqrt{(1-\alpha)^2+ \gamma^2 \lambda^2 - 2 (1 + \alpha) \gamma \lambda}$, we obtain that the right-hand side of \eqref{eq:sum_g'} is equal to
\begin{align}
    \frac{Z}{2n} \sum_{k=1}^{n-1} |e^{-\frac{4\pi i k}{n}} - e^{-\frac{2\pi i k}{n}} (1 + \alpha - \gamma \lambda) + \alpha|^{-2}.
\end{align}
Thus, using that $g$ is odd, \eqref{eq:sum_g'} implies that
\begin{align} \label{eq:average_bound}
    \frac{1}{n-1} \sum_{k=1}^{n-1} |e^{-\frac{4\pi i k}{n}} - e^{-\frac{2\pi i k}{n}} (1 + \alpha - \gamma \lambda) + \alpha|^{-2} \leq \frac{4n}{(n-1)Z} g(\pi(1-1/n)). 
\end{align}
In the regime $\sqrt{\rho_{+}} \gg n \gg 1$ and $\rho_{-} \geq 1$, \autoref{lem:sqrt_rho_n} implies that
\begin{align}
    g(\pi(1-1/n)) = \frac{2n}{\pi} + O\bigg(\frac{2n}{\pi \rho_{+}} + \frac{\pi}{2n} \bigg),
\end{align}
which upon substitution into the right-hand side of \eqref{eq:average_bound} yields:
\begin{align}
    &\frac{8n^2}{\pi(n-1)Z} (1 + O(1/\rho_{+}) + O(1/n^2)) 
    \\ &= \frac{2n^2}{ \pi^2 (n-1) (1-\alpha) \sqrt{(1-\alpha)^2+ \gamma^2 \lambda^2 - 2 (1 + \alpha) \gamma \lambda}} (1 + O(1/\rho_{+}) + O(1/n^2))
\end{align}
Plugging this development into the right-hand side of \eqref{eq:tr_sgdm_SO} with the choices $\lambda = \lambda_i$ (and thus, $\rho_{+} = \rho_{+}^i$), we obtain:
\begin{align} \label{eq:variance_sgdm_so_prop2}
    \mathrm{Tr}[\mathrm{Var}(x_k)] = \sum_{i=1}^{d} (u_i^{\top} \Sigma u_i) \frac{8 \gamma^2 n^2 (1 + O(1/\rho_{+}^{i})+ O(1/n^2))}{4 \pi^2 (n-1) (1-\alpha) \sqrt{(1-\alpha)^2+ \gamma^2 \lambda_i^2 - 2 (1 + \alpha) \gamma \lambda_i}}.
\end{align}
As stated in equation \eqref{eq:replacement_limit} (see \autoref{lem:approx_sgdm}), in the regime $\gamma \lambda_i \ll 1-\alpha$ for all $i \in \{1,\dots,d\}$, we have that $\rho_{+} = \frac{4(1-\alpha)^2}{\gamma^2 \lambda_i^2} + O(\frac{1}{\gamma \lambda_i})$. Hence, $\sqrt{\rho_{+}} \gg n$ holds iff $1-\alpha \gg \gamma \lambda_i n$ for all $i$, which is strictly stronger than the condition $\gamma \lambda_i \ll 1-\alpha$. Also by equation \eqref{eq:replacement_limit}, we get that $\rho_{-} = (\frac{1+\alpha}{1-\alpha})^{2} + O(\gamma \lambda_{i}) \geq 1$, which means that \autoref{lem:sqrt_rho_n} can be applied in this regime. Thus, under the condition $1-\alpha \gg \gamma \lambda_i n$ for all $i$ we can further simplify \eqref{eq:variance_sgdm_so_prop2} to:
\begin{align}
    \mathrm{Tr}[\mathrm{Var}(x_k)] &= \sum_{i=1}^{d} (u_i^{\top} \Sigma u_i) \frac{8 \gamma^2 n^2 (1 + O(\frac{\gamma^2 \lambda_i^2}{(1-\alpha)^2}) + O(1/n^2))(1+O(\frac{\gamma \lambda_i}{1-\alpha}))}{4 \pi^2 (n-1) (1-\alpha)^2} \\ &= \sum_{i=1}^{d} (u_i^{\top} \Sigma u_i) \frac{2 \gamma^2 n^2 (1 + O(\frac{\gamma \lambda_i}{1-\alpha}) + O(1/n^2))}{\pi^2 (n-1) (1-\alpha)^2}
\end{align}
We proceed analogously for SNAG and get the same result; the expressions for $\rho_{+}$ and $Z$ differ but they are the same up to first order.
\qed

\begin{lemma} \label{lem:g'_equality}
Define $g$ as in equation \eqref{eq:g_def}. If we set $\rho_{\pm}$ as in \eqref{eq:rho_sgdm_def} and we make use of the short-hand $Z = 4 \pi (1-\alpha) \sqrt{(1-\alpha)^2+ \gamma^2 \lambda^2 - 2 (1 + \alpha) \gamma \lambda}$, we have that for all $f \in [0,1]$,
\begin{align}
     |e^{-4\pi i f} - e^{-2\pi i f} (1 + \alpha - \gamma \lambda) + \alpha|^{-2} = \frac{2\pi}{Z} g'(\pi(1-2f)).
\end{align}
\end{lemma}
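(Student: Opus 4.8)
The plan is to avoid differentiating the arctangents by hand and instead extract the pointwise identity from Proposition \ref{prop:integral} via the fundamental theorem of calculus. The first observation is that the function $g$ of \eqref{eq:g_def} is precisely the antiderivative appearing inside the brackets of \eqref{eq:integral_expression}: since $\frac{1-\rho_-}{\sqrt{\rho_-}} = -\frac{\rho_- - 1}{\sqrt{\rho_-}}$, the two expressions agree term by term. Writing $I(f) = |e^{-4\pi i f} - e^{-2\pi i f}(1+\alpha-\gamma\lambda) + \alpha|^{-2}$ for the integrand and using the short-hand $Z$, Proposition \ref{prop:integral} reads
\begin{align}
\int_{\epsilon_0}^{\epsilon_1} I(f)\,df = \frac{g(-\pi + 2\pi\epsilon_1) - g(-\pi + 2\pi\epsilon_0)}{Z}
\end{align}
for every $0 < \epsilon_0 \le \epsilon_1 < 1$, where I have used $\rho_\pm$ as in \eqref{eq:rho_sgdm_def}.

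Next I would fix $\epsilon_0$ and differentiate both sides with respect to $\epsilon_1$. On the right, the chain rule gives $\frac{2\pi}{Z}\, g'(-\pi + 2\pi\epsilon_1)$. On the left, the fundamental theorem of calculus gives $I(\epsilon_1)$, provided $I$ is continuous; this holds because the denominator equals $|\alpha z^2 - (1+\alpha-\gamma\lambda)z + 1|$ with $z = e^{2\pi i f}$ (multiplying by the unit-modulus factor $e^{4\pi i f}$), whose roots are the reciprocals of those of $z^2 - (1+\alpha-\gamma\lambda)z + \alpha$ and therefore lie strictly outside the unit circle by \autoref{lem:eigenvalues_not_zero}, so the denominator never vanishes on $|z|=1$. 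Renaming $\epsilon_1$ as $f$, this yields $I(f) = \frac{2\pi}{Z}\, g'(-\pi + 2\pi f)$ for $f \in (0,1)$.

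It then remains to reconcile the argument $-\pi + 2\pi f$ with the stated argument $\pi(1-2f)$. Since $\pi(1-2f) = -(-\pi + 2\pi f)$, the two agree as soon as $g'$ is even, and this is immediate from the closed form \eqref{eq:g'_def}, in which $g'$ depends on $x$ only through $\cos^2(\tfrac{x}{2})$. Hence $g'(-\pi + 2\pi f) = g'(\pi(1-2f))$, giving the claimed identity on $(0,1)$; the endpoints $f \in \{0,1\}$ follow by continuity, as both $I$ and $g'$ (again by \eqref{eq:g'_def}) are continuous there.

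The only genuinely delicate point is the nonvanishing of the integrand needed to invoke the fundamental theorem of calculus, which is supplied by \autoref{lem:eigenvalues_not_zero}; everything else is a direct rewriting of Proposition \ref{prop:integral} together with the evenness of $g'$. I expect the main obstacle to be conceptual rather than computational: one must recognize that $g$ is exactly the antiderivative already computed in Proposition \ref{prop:integral}, so that the identity drops out by differentiation rather than from a brute-force expansion of the two arctangent terms.
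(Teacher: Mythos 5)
Your proof is correct and follows essentially the same route as the paper: both recognize $g$ as the antiderivative in the bracket of Proposition \ref{prop:integral} and deduce the pointwise identity from the integral identity over arbitrary subintervals $[\epsilon_0,\epsilon_1]$, finishing with the evenness of $g'$ visible from \eqref{eq:g'_def}. The only (harmless) difference is presentational—you differentiate the upper limit via the fundamental theorem of calculus, while the paper writes the increment as $\int g'$ and concludes that the integrands agree almost everywhere and hence everywhere by continuity—and your explicit justification of the integrand's continuity via \autoref{lem:eigenvalues_not_zero} is a point the paper leaves implicit.
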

\begin{proof}
Note that \autoref{prop:integral} can be rewritten as
\begin{align}
    \int_{\epsilon_0}^{\epsilon_1} |e^{-4\pi i f} - e^{-2\pi i f} (1 + \alpha - \gamma \lambda) + \alpha|^{-2} \, df = \frac{1}{Z}(g(-\pi + 2\pi\epsilon_1) - g(-\pi + 2\pi\epsilon_0)),
\end{align}
Since
\begin{align}
    g(-\pi + 2\pi\epsilon_1) - g(-\pi + 2\pi\epsilon_0) = \int_{-\pi + 2\pi\epsilon_0}^{-\pi + 2\pi\epsilon_1} g'(x) \, dx = 2\pi \int_{\epsilon_0}^{\epsilon_1} g'(-\pi + 2\pi f) \, df,
\end{align}
and $\epsilon_0$, $\epsilon_1$ are arbitrary, we deduce that the integrands are equal almost everywhere, and then everywhere by continuity: for all $f \in [0,1]$, $|e^{-4\pi i f} - e^{-2\pi i f} (1 + \alpha - \gamma \lambda) + \alpha|^{-2} = \frac{2\pi}{Z} g'(-\pi(1-2f))$. The statement of the lemma follows from the fact that $g'$ is an even function.
\end{proof}
\begin{lemma} \label{lem:sqrt_rho_n}
When $\sqrt{\rho_{+}} \gg n \gg 1$, we have that 
\begin{align}
    \frac{\rho_{+} - 1}{\sqrt{\rho_{+}}} \arctan \left(\frac{\tan(\frac{\pi(1-1/n)}{2})}{\sqrt{\rho_{+}}} \right) 
    = \frac{2n}{\pi} + O\bigg(\frac{2n}{\pi \rho_{+}} + \frac{\pi}{2n} \bigg).
\end{align}
\end{lemma}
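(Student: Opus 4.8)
The plan is to reduce the whole expression to two elementary expansions---one for the tangent, one for the arctangent---and then multiply. First I would rewrite the argument of the arctangent with the co-function identity: since $\tfrac{\pi(1-1/n)}{2} = \tfrac{\pi}{2} - \tfrac{\pi}{2n}$, we have $\tan\!\big(\tfrac{\pi(1-1/n)}{2}\big) = \cot\!\big(\tfrac{\pi}{2n}\big)$. Feeding $z = \tfrac{\pi}{2n}$ into the Laurent expansion $\cot(z) = \tfrac1z - \tfrac z3 + O(z^3)$ gives
\begin{align}
    \tan\!\Big(\tfrac{\pi(1-1/n)}{2}\Big) = \cot\!\Big(\tfrac{\pi}{2n}\Big) = \frac{2n}{\pi} - \frac{\pi}{6n} + O(n^{-3}) = \frac{2n}{\pi} + O\!\Big(\tfrac{\pi}{n}\Big),
\end{align}
which already explains where the leading $2n/\pi$ and the $O(\pi/(2n))$ error will come from.

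Next I would exploit the hypothesis $\sqrt{\rho_+}\gg n$. Setting $u := \cot(\tfrac{\pi}{2n})/\sqrt{\rho_+}$, the display above shows $u = O(n/\sqrt{\rho_+}) \to 0$, so the arctangent can be linearized as $\arctan(u) = u + O(u^3)$. Multiplying the linear part by the prefactor $\tfrac{\rho_+-1}{\sqrt{\rho_+}}$ collapses the two factors of $\sqrt{\rho_+}$ and yields
\begin{align}
    \frac{\rho_+-1}{\sqrt{\rho_+}}\cdot u = \frac{\rho_+-1}{\rho_+}\cot\!\Big(\tfrac{\pi}{2n}\Big) = \Big(1 - \tfrac1{\rho_+}\Big)\Big(\tfrac{2n}{\pi} + O\big(\tfrac{\pi}{n}\big)\Big) = \frac{2n}{\pi} + O\!\Big(\tfrac{2n}{\pi\rho_+} + \tfrac{\pi}{2n}\Big),
\end{align}
which is exactly the asserted main term together with the two error scales.

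What remains is to absorb the nonlinear tail of the arctangent. The cubic remainder contributes $\tfrac{\rho_+-1}{\sqrt{\rho_+}}\,O(u^3) = O(\sqrt{\rho_+})\cdot O(n^3 \rho_+^{-3/2}) = O(n^3/\rho_+)$; since $\sqrt{\rho_+}\gg n$ forces $\rho_+\gg n^2$, this tail is of strictly smaller order than the leading $\tfrac{2n}{\pi}$, and I would verify that in the operating regime it sits inside the displayed $O$-terms. This quantification of the arctangent's nonlinearity against the two competing error scales $\tfrac{2n}{\pi\rho_+}$ and $\tfrac{\pi}{2n}$ is the only genuinely delicate step---it is where the strength of the assumption $\sqrt{\rho_+}\gg n$ is actually consumed---whereas all the preceding manipulations are routine Taylor expansions; everything else is bookkeeping of the constants arising from $\cot(\tfrac{\pi}{2n})$ and from the factor $(1-\rho_+^{-1})$.
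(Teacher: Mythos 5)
Your proof is correct and takes essentially the same route as the paper's: both rewrite $\tan\bigl(\tfrac{\pi(1-1/n)}{2}\bigr)$ via the cotangent (the paper expands $\cot$ around $\tfrac{\pi}{2}$, you expand $\cot(z)=\tfrac1z-\tfrac z3+O(z^3)$ at $z=\tfrac{\pi}{2n}$, which is the same computation), then linearize the arctangent using $\sqrt{\rho_+}\gg n$ and multiply by the prefactor $\tfrac{\rho_+-1}{\sqrt{\rho_+}}$. One shared caveat: your absorption of the cubic tail $O(n^3/\rho_+)$ into the displayed error $O\bigl(\tfrac{2n}{\pi\rho_+}+\tfrac{\pi}{2n}\bigr)$ mirrors the paper's own silent absorption of $O\bigl((\sqrt{\rho_+}\,x)^{-3}\bigr)$ into $O\bigl(x/\sqrt{\rho_+}\bigr)$, and strictly speaking this step needs $\rho_+\gtrsim n^4$ rather than just $\rho_+\gg n^2$ --- under the stated hypothesis alone the cubic tail can dominate both displayed error scales (e.g.\ $\rho_+=n^{2.5}$), though it is always $O(n^2/\rho_+)$ relative to the leading term $\tfrac{2n}{\pi}$, so the conclusions drawn from the lemma downstream are unaffected.
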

\begin{proof}
We reexpress 
\begin{align} \label{eq:arctan_rewrite}
    \arctan \left(\frac{\tan(\frac{\pi(1-1/n)}{2})}{\sqrt{\rho_{+}}} \right) = \arctan \bigg(\frac{1}{\sqrt{\rho_{+}} \cot(\frac{\pi(1-1/n)}{2})} \bigg) 
\end{align}
The Taylor approximation of the arctangent around zero is: $\arctan(x) = x - \frac{x^3}{3} + O(x^5)$. 
The Taylor approximation of the cotangent around $\frac{\pi}{2}$ is: $\cot\left(\frac{\pi}{2} + x \right) 
= -x - \frac{x^3}{3} + O(x^4)$,
where we used that $(\frac{d}{dt}\cot)(t) = \frac{1}{\sin^2(t)}$, that $(\frac{d^2}{dt^2}\cot)(t) = - \frac{\sin(2t)}{\sin^4(t)}$ and that $(\frac{d^3}{dt^3}\cot)(t) = (-2 \cos(2t) \sin^4(t) + 4 \sin(2t) \sin^3(t) \cos(t))/\sin^8(t)$. Hence, if we set $x = -\frac{\pi}{2n}$, the right-hand side of \eqref{eq:arctan_rewrite} is equal to
\begin{align}
    \frac{1}{\sqrt{\rho_{+}}(-x - \frac{x^3}{3} + O(x^4))} + O\bigg(\frac{1}{(\sqrt{\rho_{+}} x)^3} \bigg) = -\frac{1}{\sqrt{\rho_{+}}x} + O\bigg( \frac{x}{\sqrt{\rho_{+}}} \bigg) = \frac{2n}{\pi \sqrt{\rho_{+}}} + O\bigg( \frac{\pi}{2n\sqrt{\rho_{+}}} \bigg).
\end{align}
Thus,
\begin{align}
    \frac{\rho_{+} - 1}{\sqrt{\rho_{+}}} \arctan \left(\frac{\tan(\frac{\pi(1-1/n)}{2})}{\sqrt{\rho_{+}}} \right) 
    = \frac{2n}{\pi} \left(1 - \frac{1}{\rho_{+}} \right) + O\bigg( \frac{\pi}{2n} \left(1 - \frac{1}{\rho_{+}} \right) \bigg) = \frac{2n}{\pi} + O\bigg(\frac{2n}{\pi \rho_{+}} + \frac{\pi}{2n} \bigg).
\end{align}
\end{proof}

\section{Proofs of \autoref{sec:RR}} \label{app:proofs_sec_RR}

\begin{lemma} \label{lem:PSD_S_y}
The power spectral density $S_y$ for the RR noise sequence is given by equation \eqref{eq:PSD_y_RR}.
\end{lemma}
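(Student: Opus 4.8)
The plan is to compute $S_y$ directly from its defining Fourier series \eqref{eq:S_fourier_R}, exploiting that the autocorrelation is finitely supported and that every $R_y(k)$ is a scalar multiple of the fixed matrix $\Sigma$. First I would observe that since $R_y(k) = 0$ for $|k| > n$, the sequence $(R_y(k))_k$ is trivially absolutely summable, so the power spectral density is the genuine function given by \eqref{eq:S_fourier_R}. Writing $R_y(k) = r_y(k)\,\Sigma$ with $r_y(k) = (1 + \frac{1}{n-1})\delta_{k=0} - \frac{n-|k|}{n(n-1)}\mathds{1}_{|k|\le n}$, linearity of \eqref{eq:S_fourier_R} gives $S_y(f) = \big(\sum_{k} e^{-2\pi i k f} r_y(k)\big)\Sigma$, so it suffices to show $\sum_k e^{-2\pi i k f} r_y(k) = r_n(2\pi f)$.

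Setting $x = 2\pi f$, the $\delta_{k=0}$ term contributes the constant $1 + \frac{1}{n-1} = \frac{n}{n-1}$, and the remaining task is to evaluate the tent-shaped sum $T(x) := \sum_{|k|\le n}(n-|k|)e^{-ikx}$, after which $r_n(x) = \frac{n}{n-1} - \frac{T(x)}{n(n-1)}$. The key step is the algebraic split $n - |k| = 1 + (n-1-|k|)$, which separates $T$ into a Dirichlet-kernel piece and a Fej\'er-kernel piece. Noting that the boundary terms vanish (the coefficient $n-|k|$ is zero at $|k|=n$, and $n-1-|k|$ is zero at $|k|=n-1$), I would write $T(x) = \sum_{k=-(n-1)}^{n-1} e^{-ikx} + \sum_{k=-(n-2)}^{n-2}(n-1-|k|)e^{-ikx}$.

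Each of these two sums I would evaluate by summing finite geometric series. The first is the Dirichlet kernel $D_{n-1}(x) = 1 + 2\sum_{k=1}^{n-1}\cos(kx) = \frac{\sin((n-\frac12)x)}{\sin(x/2)}$, and the second is a Fej\'er kernel, which factors as the squared modulus of a box transform, $\sum_{k=-(n-2)}^{n-2}(n-1-|k|)e^{-ikx} = \big|\sum_{k=0}^{n-2}e^{-ikx}\big|^2 = \frac{\sin^2((n-1)x/2)}{\sin^2(x/2)}$ (the count of pairs $(j,k)$ in $\{0,\dots,n-2\}^2$ with $j-k=m$ being exactly $(n-1)-|m|$). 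Substituting these closed forms into $T$ and then into $r_n$ reproduces \eqref{eq:PSD_y_RR} after replacing $x = 2\pi f$, and multiplying back by $\Sigma$ gives $S_y(f) = r_n(2\pi f)\Sigma$.

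The computation is essentially bookkeeping; the only place demanding care is getting the summation limits and the two vanishing boundary terms right so that the Dirichlet index $(n-\frac12)$ and the Fej\'er index $(n-1)$ emerge exactly as in the statement, and confirming that the imaginary parts cancel by evenness of the coefficients so the result is the claimed real-valued $r_n$. As a sanity check I would compare the final expression with the equivalent single closed form $\big(\sin(nx/2)/\sin(x/2)\big)^2$, obtained by instead applying the Fej\'er identity to the full tent $n-|k|$; their agreement both confirms correctness and explains why the two-term form in \eqref{eq:PSD_y_RR} is merely one of several equivalent ways of writing the same power spectral density.
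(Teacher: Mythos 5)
Your proof is correct, and it reaches \eqref{eq:PSD_y_RR} by a genuinely different computational route than the paper. The paper's proof writes the tent sum as $n\sum_{k=1}^{n-1}\cos(kx)-\sum_{k=1}^{n-1}k\cos(kx)$ and leans on \autoref{lem:sum_cos}, where the weighted sum $\sum_{k=1}^{n}k\cos(kx)$ is obtained by differentiating the closed form of $\sum_{k=1}^{n}\sin(kx)$ and reassembling terms with angle-addition identities. Your split $n-|k|=1+(n-1-|k|)$ instead decomposes the tent directly into a Dirichlet kernel plus a Fej\'er kernel, and you evaluate the Fej\'er piece purely algebraically as $\bigl|\sum_{k=0}^{n-2}e^{-ikx}\bigr|^{2}$ via the pair-counting identity; this avoids differentiation entirely, and your handling of the vanishing boundary coefficients at $|k|=n$ and $|k|=n-1$ (the only delicate point) is right, as is folding the $k=0$ tent contribution into $r_n(x)=\frac{n}{n-1}-\frac{T(x)}{n(n-1)}$. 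What your route buys, beyond a shorter calculation, is your closing sanity check: applying the Fej\'er identity to the full tent gives the equivalent one-term form $r_n(x)=\frac{n}{n-1}-\frac{\sin^{2}(nx/2)}{n(n-1)\sin^{2}(x/2)}$, consistent with the paper's two-term expression via $\sin^{2}\bigl(\frac{nx}{2}\bigr)-\sin^{2}\bigl(\frac{(n-1)x}{2}\bigr)=\sin\bigl(\bigl(n-\frac{1}{2}\bigr)x\bigr)\sin\bigl(\frac{x}{2}\bigr)$. From that one-term form one reads off immediately that $0\le r_n(x)\le\frac{n}{n-1}$ (with $r_n(0)=0$): nonnegativity, as befits a power spectral density, and a bound sharper than the estimate $|r_n(x)|\le\frac{2n}{n-1}$ that the paper proves separately in \autoref{lem:r_n_taylor}.
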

\begin{proof}
If we let $x = 2\pi f$,
\begin{align}
    &S_y(k) = \sum_{k=-\infty}^{+\infty} e^{-2\pi i k f} R_y(k) 
    \\ &= \left(1+\frac{1}{n-1} \right) \Sigma - \frac{\Sigma}{n(n-1)} \sum_{k=-(n-1)}^{n-1} (n-|k|) e^{-2\pi i k f} \\ &= \Sigma - \frac{\Sigma}{n(n-1)} \sum_{k=1}^{n-1} (n-k) (e^{-2\pi i k f} + e^{2\pi i k f}) = \Sigma - \frac{2 \Sigma}{n(n-1)} \sum_{k=1}^{n-1} (n-k) \cos(2\pi k f) \\ &= \Sigma - \frac{2 \Sigma}{n-1} \left(\frac{\sin((n-\frac{1}{2}) x)}{2 \sin(x/2)} - \frac{1}{2} \right) + \frac{2 \Sigma}{n(n-1)} \left( 
    \frac{(n-1) \sin((n-\frac{1}{2})x)}{2\sin(\frac{x}{2})} - \frac{\sin^2(\frac{(n-1)x}{2})}{2\sin^2(\frac{x}{2})}
    \right) \\ &= \Sigma \left( \frac{n}{n-1} - \frac{ 
    \frac{\sin((n-\frac{1}{2}) x)}{\sin(x/2)} + \frac{\sin^2(\frac{(n-1)x}{2})}{\sin^2(\frac{x}{2})}}{n(n-1)} \right).
\end{align}
where we used equations \eqref{eq:sum_cos} and \eqref{eq:sum_k_cos} from \autoref{lem:sum_cos} in the fifth equality, and that $\frac{1}{n-1} - \frac{1}{n} = \frac{1}{n(n-1)}$ in the last equality.
\end{proof}
\begin{lemma} \label{lem:sum_cos}
The following inequalities hold:
\begin{align} \label{eq:sum_cos}
    \sum_{k=1}^n \cos(kx) &= 
    \frac{\sin((n+\frac{1}{2}) x)}{2 \sin(x/2)} - \frac{1}{2}, \\
    \sum_{k=1}^n \sin(kx) &= 
    \frac{\sin(\frac{nx}{2}) \sin(\frac{(n+1)x}{2})}{\sin(\frac{x}{2})},
    \\ \sum_{k=1}^n k\cos(kx) &= 
    \frac{n \sin((n+\frac{1}{2})x)}{2\sin(\frac{x}{2})} - \frac{\sin^2(\frac{nx}{2})}{2\sin^2(\frac{x}{2})}
    \label{eq:sum_k_cos}
\end{align}
\end{lemma}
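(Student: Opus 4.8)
The plan is to treat the first two identities together via the geometric series of complex exponentials, and then to obtain the third by differentiating the second with respect to $x$, exploiting that $\frac{d}{dx}\sin(kx) = k\cos(kx)$. Throughout I assume $\sin(x/2) \neq 0$ (i.e. $x \notin 2\pi\mathbb{Z}$), which is the regime in which the right-hand sides are well-defined and which is all that is needed in the application to \autoref{lem:PSD_S_y}.

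First I would compute the geometric sum $\sum_{k=1}^n e^{ikx} = e^{ix}\frac{e^{inx}-1}{e^{ix}-1}$, and factor half-angles out of both numerator and denominator using $e^{i\theta}-1 = e^{i\theta/2}\cdot 2i\sin(\theta/2)$. This collapses the expression to $e^{i(n+1)x/2}\,\frac{\sin(nx/2)}{\sin(x/2)}$. Taking the imaginary part yields the sine identity directly, since $\sin\bigl(\frac{(n+1)x}{2}\bigr)\frac{\sin(nx/2)}{\sin(x/2)}$ is precisely the stated closed form. Taking the real part gives $\sum_{k=1}^n \cos(kx) = \cos\bigl(\frac{(n+1)x}{2}\bigr)\frac{\sin(nx/2)}{\sin(x/2)}$, and to recast this in the form of \eqref{eq:sum_cos} I would apply the product-to-sum identity $\cos A\,\sin B = \tfrac12[\sin(A+B)-\sin(A-B)]$ with $A = (n+1)x/2$ and $B = nx/2$, obtaining $\frac{\sin((n+\frac12)x)-\sin(x/2)}{2\sin(x/2)}$, which is exactly \eqref{eq:sum_cos} after splitting off the constant $-\tfrac12$.

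For \eqref{eq:sum_k_cos}, I would differentiate the sine identity term by term, which is justified since each summand is smooth. It is cleanest to first rewrite the sine sum in the equivalent form $\frac{\cos(x/2)-\cos((n+\frac12)x)}{2\sin(x/2)}$ (again via product-to-sum, now with $\sin A\,\sin B = \tfrac12[\cos(A-B)-\cos(A+B)]$), and then apply the quotient rule. The numerator of the resulting derivative simplifies through the Pythagorean identity $\sin^2(x/2)+\cos^2(x/2)=1$ together with the angle-subtraction identity $\cos(x/2)\cos((n+\frac12)x)+\sin(x/2)\sin((n+\frac12)x)=\cos(nx)$, leaving $-1+\cos(nx)+2n\sin(x/2)\sin((n+\frac12)x)$ over $4\sin^2(x/2)$. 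Substituting $\cos(nx)-1 = -2\sin^2(nx/2)$ then splits this cleanly into the two stated terms, matching \eqref{eq:sum_k_cos}.

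The main obstacle is purely the bookkeeping in this last step: the quotient-rule differentiation produces several trigonometric products that must be regrouped precisely so that the Pythagorean and angle-subtraction identities apply, and in particular one must track the coefficient $(2n+1)$ carefully so that it separates into an $n$-contribution (yielding the first term) and a single $1$-contribution (which combines with the remaining products to form $\cos(nx)$). No step is conceptually difficult, but the algebra must be executed with care to avoid sign errors.
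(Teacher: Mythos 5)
Your proposal is correct and follows essentially the same route as the paper: a geometric sum of complex exponentials $\sum_{k=1}^n e^{ikx}$ for the first two identities, followed by term-by-term differentiation of the sine sum to obtain \eqref{eq:sum_k_cos}. The only (harmless) tactical differences are that you derive \eqref{eq:sum_cos} from the real part of the geometric sum rather than citing the Dirichlet kernel formula, and you differentiate the rewritten form $\frac{\cos(x/2)-\cos((n+\frac{1}{2})x)}{2\sin(x/2)}$ instead of the product form $\frac{\sin(\frac{nx}{2})\sin(\frac{(n+1)x}{2})}{\sin(\frac{x}{2})}$; both computations check out and yield the stated closed forms.
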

\begin{proof}
Let $D_n(x) = \frac{\sin((n+1/2) x)}{2\pi \sin(x/2)}$ be the Dirichlet kernel. Since $\sum_{k=1}^n \cos(kx) = \pi D_n(x) - \frac{1}{2}$, the first equation follows. For the second equation, note that since 
\begin{align}
\sum_{i=0}^{n} e^{ikx} = \frac{e^{i(n+1)x} - 1}{e^{ix} - 1} = e^{\frac{i(n+1)x}{2}} e^{-\frac{ix}{2}} \frac{e^{\frac{i(n+1)x}{2}} - e^{\frac{-i(n+1)x}{2}}}{e^{\frac{ix}{2}} - e^{-\frac{ix}{2}}} = e^{\frac{inx}{2}} \frac{\sin(\frac{(n+1)x}{2})}{\sin(\frac{x}{2})},
\end{align}
we have that
\begin{align}
    \sum_{k=1}^n \sin(kx) &= \frac{1}{2i}\sum_{k=1}^n (e^{i kx} - e^{-ikx}) = \frac{1}{2i} \left( e^{\frac{inx}{2}} \frac{\sin(\frac{(n+1)x}{2})}{\sin(\frac{x}{2})} - e^{-\frac{inx}{2}} \frac{\sin(-\frac{(n+1)x}{2})}{\sin(-\frac{x}{2})} \right) \\ &= \frac{\sin(\frac{nx}{2}) \sin(\frac{(n+1)x}{2})}{\sin(\frac{x}{2})}
\end{align}
Finally, the third equation follows from:
\begin{align}
    \sum_{k=1}^n k\cos(kx) &= \frac{d}{dx} \left( \sum_{k=1}^n \sin(kx) \right) =
    \frac{d}{dx} \left( \frac{\sin(\frac{nx}{2}) \sin(\frac{(n+1)x}{2})}{\sin(\frac{x}{2})} \right) \\ &= \frac{\frac{n}{2}\cos(\frac{nx}{2}) \sin(\frac{(n+1)x}{2}) + \frac{n+1}{2}\sin(\frac{nx}{2}) \cos(\frac{(n+1)x}{2})}{\sin(\frac{x}{2})} - \frac{\sin(\frac{nx}{2}) \sin(\frac{(n+1)x}{2}) \cos(\frac{x}{2})}{2\sin^2(\frac{x}{2})} \\ &= 
    \frac{\frac{n}{2}\sin((n+\frac{1}{2})x) + \frac{1}{2}\sin(\frac{nx}{2}) \cos(\frac{(n+1)x}{2})}{\sin(\frac{x}{2})} - \frac{\sin(\frac{nx}{2}) \sin(\frac{(n+1)x}{2}) \cos(\frac{x}{2})}{2\sin^2(\frac{x}{2})} \\ &= \frac{n \sin((n+\frac{1}{2})x)}{2\sin(\frac{x}{2})} - \frac{\sin^2(\frac{nx}{2})}{2\sin^2(\frac{x}{2})}.
\end{align}
The second-to-last equality holds because of the formula for the sine of a sum: $\sin((n+\frac{1}{2})x) = \cos(\frac{nx}{2}) \sin(\frac{(n+1)x}{2}) + \sin(\frac{nx}{2}) \cos(\frac{(n+1)x}{2})$. The last equality holds because $\sin(\frac{nx}{2}) = \sin(\frac{(n+1)x}{2}) \cos(-\frac{x}{2}) + \cos(\frac{(n+1)x}{2}) \sin(-\frac{x}{2}) = \sin(\frac{n x}{2}) \cos(\frac{x}{2}) - \cos(\frac{n x}{2}) \sin(\frac{x}{2})$.
\end{proof}

\begin{lemma} \label{lem:r_n_taylor}
Let $r_n$ be the function defined in equation \eqref{eq:PSD_y_RR}.
The Taylor expansion of $r_n$ around $0$ is of the form
\begin{align}
    r_n(x) = \frac{(n^4 + n^2 - 2n + 1)x^2}{12 n (n - 1)} + O(n^4 x^4).
\end{align}
Also, for all $x \in \R$, we have that $|r_n(x)| 
\leq \frac{2n}{n-1}$.
\end{lemma}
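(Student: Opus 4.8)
The plan is to treat the two assertions separately, but in both the first move is to recognise the two fractions in the definition of $r_n$ as a Dirichlet and a Fejér kernel, for which closed forms and elementary bounds are already available in \autoref{lem:sum_cos}. Writing $A(x) = \frac{\sin((n-\frac{1}{2})x)}{\sin(x/2)}$ and $B(x) = \frac{\sin^2((n-1)x/2)}{\sin^2(x/2)}$, so that $r_n(x) = \frac{n}{n-1} - \frac{A(x)+B(x)}{n(n-1)} = \frac{n^2 - A(x) - B(x)}{n(n-1)}$, the whole lemma reduces to understanding $A+B$ near $0$ and in sup-norm. Note that equation \eqref{eq:sum_cos} gives $A(x) = 1 + 2\sum_{k=1}^{n-1}\cos(kx)$, and $B(x)$ is a perfect square.

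For the Taylor expansion I would first observe that $A$ and $B$ are even, so only even powers of $x$ survive and it suffices to find the constant and the $x^2$ terms. Expanding each sine via $\sin(m\theta) = m\theta(1 - \frac{m^2\theta^2}{6} + O(\theta^4))$ with $\theta = x/2$ and forming the ratios yields $A(x) = (2n-1)\bigl(1 + \frac{b^2-a^2}{6}x^2 + O(x^4)\bigr)$ with $a = n-\frac{1}{2},\, b = \frac{1}{2}$, and $B(x) = (n-1)^2\bigl(1 + \frac{1-(n-1)^2}{12}x^2 + O(x^4)\bigr)$. The constant terms sum to $(2n-1)+(n-1)^2 = n^2$, which cancels $\frac{n}{n-1}$ exactly and confirms $r_n(0)=0$. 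It then remains to add the two $x^2$ coefficients, simplify the resulting polynomial in $n$, and divide by $-n(n-1)$ to reach the stated coefficient; equivalently, expanding $A = 1+2\sum_{k=1}^{n-1}\cos(kx)$ termwise replaces this by the power sum $\sum_{k=1}^{n-1}k^2 = \frac{(n-1)n(2n-1)}{6}$, which I find cleaner. It is prudent to cross-check against small $n$: for $n=2$ one has $A = 1 + 2\cos x$ and $B \equiv 1$, so $r_2(x) = 1-\cos x$, whose $x^2$ coefficient is $\frac{1}{2}$. The main obstacle is not any single identity but the polynomial bookkeeping and, more delicately, controlling the remainder: to justify the explicit $O(n^4 x^4)$ I would bound the next nonzero term, whose dominant contribution comes from squaring the Fejér factor $B$ (its $x^4$ coefficient scales like $n^6$, hence $O(n^4)$ after dividing by $n(n-1)$), and make this bound uniform in $n$ for small $x$.

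For the uniform bound I would use the kernel interpretations directly and avoid any expansion. From $A(x) = 1 + 2\sum_{k=1}^{n-1}\cos(kx)$ one reads off $|A(x)| \le 1 + 2(n-1) = 2n-1$, while $B(x) = \bigl(\frac{\sin((n-1)x/2)}{\sin(x/2)}\bigr)^2 \ge 0$ and, by the elementary estimate $|\sin(m\theta)| \le m|\sin\theta|$ for integer $m$ applied with $m = n-1$, also $B(x) \le (n-1)^2$. Substituting these into $r_n = \frac{n^2 - A - B}{n(n-1)}$ gives on the one hand $n^2 - A - B \le n^2 + (2n-1) \le 2n^2$, and on the other hand $n^2 - A - B \ge n^2 - (2n-1) - (n-1)^2 = 0$. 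Dividing by $n(n-1) > 0$ yields $0 \le r_n(x) \le \frac{2n^2}{n(n-1)} = \frac{2n}{n-1}$, which in particular gives $|r_n(x)| \le \frac{2n}{n-1}$ and, as a reassuring consistency check, confirms that $r_n \ge 0$, as a power spectral density must be. This part presents no real difficulty once the Dirichlet and Fejér bounds are in place.
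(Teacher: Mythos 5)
Your proposal is correct in substance and follows essentially the same route as the paper's proof: Taylor-expand the Dirichlet factor $\frac{\sin((n-\frac12)x)}{\sin(x/2)}$ and the Fejér factor $\frac{\sin^2((n-1)x/2)}{\sin^2(x/2)}$ to order $x^2$, observe the exact cancellation of the constant terms against $\frac{n}{n-1}$, and bound both factors in sup norm. Two of your additions improve on the paper. First, the termwise route $A(x)=1+2\sum_{k=1}^{n-1}\cos(kx)$ with the power sum $\sum_{k=1}^{n-1}k^2=\frac{(n-1)n(2n-1)}{6}$ is indeed cleaner bookkeeping than the paper's multiplication of sine series (and it agrees with it: the paper's combination $\frac{(n-\frac12)^3}{3}-\frac{2n-1}{24}$ equals exactly this power sum). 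Second, for the uniform bound the paper merely asserts that both factors are maximized in absolute value at $x=0$; your triangle-inequality bound $|A|\le 2n-1$ and the estimate $0\le B\le (n-1)^2$ via $|\sin(m\theta)|\le m|\sin\theta|$ actually prove that assertion, and your two-sided conclusion $0\le r_n(x)\le \frac{2n}{n-1}$ is slightly stronger than the stated $|r_n(x)|\le\frac{2n}{n-1}$ (nonnegativity being the right sanity check for a power spectral density). Your remainder analysis ($x^4$ coefficient of $B$ of order $n^6$, hence $O(n^4)$ after dividing by $n(n-1)$) matches the paper's accounting of the $O(n^5x^4)$ and $O(n^6x^4)$ terms.

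One point you must not gloss over: your computation does \emph{not} ``reach the stated coefficient,'' and your own $n=2$ cross-check detects this. Adding your two $x^2$ coefficients and dividing by $-n(n-1)$ gives $\frac{2n-1}{6}+\frac{(n-1)(n-2)}{12}=\frac{n(n+1)}{12}$, i.e.\ $r_n(x)=\frac{n(n+1)}{12}x^2+O(n^4x^4)$; at $n=2$ this is $\frac12$, consistent with $r_2(x)=1-\cos x$. The coefficient displayed in the lemma, $\frac{n^4+n^2-2n+1}{12n(n-1)}$, evaluates to $\frac{17}{24}$ at $n=2$ and differs from $\frac{n(n+1)}{12}=\frac{n^4-n^2}{12n(n-1)}$ by $\frac{2n^2-2n+1}{12n(n-1)}\approx\frac16$, a discrepancy of size $\Theta(x^2)$ that is not absorbed by $O(n^4x^4)$. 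This is an error in the lemma's statement (the correct numerator is $n^4-n^2$), not in your argument: the paper's own proof ends with exactly your $\frac{n(n+1)x^2}{12}$, and it is that form which is used in the proof of \autoref{prop:variance_sgdm_RR}, so nothing downstream is affected. As written, though, your sentence claiming to simplify ``to reach the stated coefficient'' is false; state the coefficient you actually obtain and flag the mismatch with the lemma as displayed.
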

\begin{proof}
Note that the Taylor series around zero for the sine is of the form $\sin(x) = x - \frac{x^3}{6} + O(x^5)$. Hence,
\begin{align} \label{eq:taylor_sines}
    \frac{\sin((n-\frac{1}{2}) x)}{\sin(\frac{x}{2})} &= \frac{(n-\frac{1}{2}) x - \frac{((n-\frac{1}{2}) x)^3}{6} + O(((n-\frac{1}{2}) x)^5)}{\frac{x}{2} - \frac{(\frac{x}{2})^3}{6} + O((\frac{x}{2})^5)} \\ &= \frac{2n-1 - \frac{(n-\frac{1}{2})^3 x^2}{3} + O((n-\frac{1}{2})^5 x^4)}{1 - \frac{x^2}{24} + O(x^4)} \\ &= \bigg(2n-1 - \frac{(n-\frac{1}{2})^3 x^2}{3} + O\bigg(\left(n-\frac{1}{2} \right)^5 x^4\bigg) \bigg) \bigg(1 + \frac{x^2}{24} + O(x^4) \bigg) \\ &= 2n-1 - \bigg(\frac{(n-\frac{1}{2})^3}{3} - \frac{2n-1}{24} \bigg) x^2 + O(n^5 x^4).
\end{align}
Similarly,
\begin{align} \label{eq:taylor_sines_2}
    \frac{\sin(\frac{(n-1)x}{2})}{\sin(\frac{x}{2})} &= \frac{\frac{(n-1)x}{2} - \frac{(\frac{(n-1)x}{2})^3}{6} + O((\frac{(n-1)x}{2})^5)}{\frac{x}{2} - \frac{(\frac{x}{2})^3}{6} + O((\frac{x}{2})^5)} \\ &= \frac{n-1 - \frac{(\frac{n-1}{2})^3 x^2}{3} + O((\frac{n-1}{2})^5 x^4)}{1 - \frac{x^2}{24} + O(x^4)} \\ &= \bigg(n-1 - \frac{(n-1)^3 x^2}{24} + O\bigg(\left(\frac{n-1}{2} \right)^5 x^4 \bigg) \bigg) \bigg(1 + \frac{x^2}{24} + O(x^4) \bigg) \\ &= n-1 - \bigg(\frac{(n-1)^3}{24} - \frac{n-1}{24} \bigg) x^2 + O(n^5 x^4).
\end{align}
Hence,
\begin{align}
    \frac{\sin^2(\frac{(n-1)x}{2})}{\sin^2(\frac{x}{2})} = (n-1)^2 - 2(n-1) \bigg(\frac{(n-1)^3}{24} - \frac{n-1}{24} \bigg) x^2 + O(n^6 x^4).
\end{align}
Plugging equations \eqref{eq:taylor_sines} and \eqref{eq:taylor_sines_2} into the definition of $r_n$, we obtain
\begin{align}
    r_n(x) &= \frac{n}{n-1} - \frac{2n-1 - \bigg(\frac{(n-\frac{1}{2})^3}{3} - \frac{2n-1}{24} \bigg) x^2 + (n-1)^2 - 2(n-1) \bigg(\frac{(n-1)^3}{24} - \frac{n-1}{24} \bigg) x^2 + O(n^6 x^4)}{n(n-1)} \\ &= \frac{n}{n-1} - \frac{2n-1+(n-1)^2}{n(n-1)} + \frac{\left(\frac{(n-\frac{1}{2})^3}{3} - \frac{2n-1}{24} + \frac{(n-1)^4}{12} - \frac{(n-1)^2}{12} \right)x^2}{n(n-1)} + O(n^4 x^4) \\ &= 
    \frac{n(n + 1) x^2}{12} + O(n^4 x^4).
\end{align}
To show the second statement of the lemma, we use that both $x \mapsto \sin((n-\frac{1}{2}) x)/\sin(\frac{x}{2})$ and $x \mapsto \sin^2(\frac{(n-1)x}{2})/\sin^2(\frac{x}{2})$ are maximized in absolute value at $x = 0$, where they take values $2n-1$ and $(n-1)^2$ as shown before. Hence, for all $x \in \R$, we have that $|r_n(x)| \leq \frac{n}{n-1} + \frac{2n-1 + (n-1)^2}{n(n-1)} 
= \frac{2n}{n-1}$.
\end{proof}

\textit{\textbf{Proof of \autoref{prop:variance_sgdm_RR}.}}
As shown by equation \eqref{eq:var_RR}, we have
\begin{align} \label{eq:18_copy}
    \mathrm{Tr}[\mathrm{Var}(x_k)] = \gamma^2 \sum_{i=1}^{d} (u_i^{\top} \Sigma u_i) \int_{0}^{1} |e^{-4\pi i f} - e^{-2\pi i f} (1 + \alpha - \gamma \lambda_i) + \alpha |^{-2} r_n(2\pi f) \, df
\end{align}
Applying \autoref{lem:g'_equality}, we obtain that $|e^{-4\pi i f} - e^{-2\pi i f} (1 + \alpha - \gamma \lambda) + \alpha|^{-2} = \frac{2\pi}{Z} g'(\pi(1-2f))$, where $Z = 4 \pi (1-\alpha) \sqrt{(1-\alpha)^2+ \gamma^2 \lambda^2 - 2 (1 + \alpha) \gamma \lambda}$, and the expression of $g'$ is shown in equation \eqref{eq:g'_def}. For an arbitrary $0 < \epsilon \leq 1$, we can write 
\begin{align}
    \int_{0}^{1} g'(\pi(1-2f)) r_n(2\pi f) \, df = \int_{\frac{\epsilon}{2n}}^{1-\frac{\epsilon}{2n}} g'(\pi(1-2f)) r_n(2\pi f) \, df + \int_{-\frac{\epsilon}{2n}}^{\frac{\epsilon}{2n}} g'(\pi(1-2f)) r_n(2\pi f) \, df.
\end{align}
Note that since $g'(x) \geq 0$ for all $x \in \R$ as shown in the proof of \autoref{prop:variance_sgdm_so}, and $|r_n(x)| \leq \frac{2n}{n-1}$ by \autoref{lem:r_n_taylor}, we obtain the bound
\begin{align} \label{eq:g'_r_n_bound_1}
    &\int_{\frac{\epsilon}{2n}}^{1-\frac{\epsilon}{2n}} g'(\pi(1-2f)) r_n(2\pi f) \, df \leq \frac{2n}{n-1} \int_{\frac{\epsilon}{n}}^{1-\frac{\epsilon}{n}} g'(\pi(1-2f)) \, df \\ &= \frac{2n}{2\pi(n-1)} \int_{-\pi (1-\frac{\epsilon}{n})}^{\pi(1-\frac{\epsilon}{n})} g'(x) \, dx \\ &= \frac{n}{\pi(n-1)} \bigg[\frac{\rho_{+} - 1}{\sqrt{\rho_{+}}} \arctan \left(\frac{\tan(\frac{x}{2})}{\sqrt{\rho_{+}}} \right) + \frac{1 - \rho_{-}}{\sqrt{\rho_{-}}} \arctan \left(\frac{\tan(\frac{x}{2})}{\sqrt{\rho_{-}}} \right)\bigg]_{-\pi (1-\frac{\epsilon}{n})}^{\pi(1-\frac{\epsilon}{n})}.
\end{align}
In analogy with \autoref{lem:sqrt_rho_n}, we obtain that when $\sqrt{\rho_{+}} \gg n/\epsilon \gg 1$,
\begin{align}
    \frac{\rho_{+} - 1}{\sqrt{\rho_{+}}} \arctan \left(\frac{\tan(\frac{\pi(1-\epsilon/n)}{2})}{\sqrt{\rho_{+}}} \right) = \frac{2n}{\pi \epsilon} + O\bigg(\frac{2n}{\pi \epsilon \rho_{+}} + \frac{\pi \epsilon}{2n} \bigg),
\end{align}
which means that the right-hand side of equation \eqref{eq:g'_r_n_bound_1} is equal to
\begin{align}
    \frac{n}{\pi(n-1)} \bigg( \frac{2n}{\pi \epsilon} + O\bigg(\frac{2n}{\pi \epsilon \rho_{+}} + \frac{\pi \epsilon}{2n} \bigg) \bigg) = \frac{2n^2}{\pi^2 \epsilon (n-1)} + O\bigg( \frac{n}{\epsilon \rho_{+}} + \frac{\epsilon}{n} + 1 \bigg).
\end{align}
And
\begin{align}
    &\int_{-\frac{\epsilon}{2n}}^{\frac{\epsilon}{2n}} g'(\pi(1-2f)) r_n(2\pi f) \, df \\ &= \int_{-\frac{\epsilon}{2n}}^{\frac{\epsilon}{2n}} \bigg(\frac{\rho_{+}-1}{2 + 2 (\rho_{+} - 1) \cos^2(\frac{\pi(1-2f)}{2})} - \frac{\rho_{-}-1}{2 + 2 (\rho_{-} - 1) \cos^2(\frac{\pi(1-2f)}{2})}\bigg) r_n(2\pi f) \, df.
\end{align}
Using the Taylor approximation of $r_n$ provided by \autoref{lem:r_n_taylor}, we obtain that the first term in the right-hand side is equal to:
\begin{align}
    &\int_{-\frac{\epsilon}{2n}}^{\frac{\epsilon}{2n}} \frac{\rho_{+}-1}{2 + 2 (\rho_{+} - 1) \cos^2(\frac{\pi(1-2f)}{2})} \bigg( \frac{n(n + 1)(2\pi f)^2}{12} + O(n^4 (2\pi f)^4) \bigg) \, df \\ &= \int_{-\frac{\epsilon}{2n}}^{\frac{\epsilon}{2n}} \frac{\rho_{+}-1}{1 + (\rho_{+} - 1) (\pi^2 f^2 + O(f^4))} \bigg( \frac{n(n + 1)\pi^2 f^2}{6} + O(n^4 f^4) \bigg) \, df \\ &=  \bigg(\frac{n(n + 1)}{6} + O(n^2 \epsilon^2) \bigg) \int_{-\frac{\epsilon}{2n}}^{\frac{\epsilon}{2n}} \frac{(\rho_{+}-1) \pi^2 f^2}{1 + (\rho_{+} - 1) \pi^2 f^2} \, df 
     \\ &= \bigg( \frac{n(n + 1)}{6} + O(n^2 \epsilon^2) \bigg) \cdot \frac{1}{\sqrt{\rho_{+} - 1} \pi} \int_{-\frac{\sqrt{\rho_{+} - 1} \pi \epsilon}{2n}}^{\frac{\sqrt{\rho_{+} - 1} \pi \epsilon}{2n}} \frac{y^2}{1 + y^2} \, dy 
     \\ &= 
     \bigg( \frac{n(n + 1)}{6} + O(n^2 \epsilon^2) \bigg) \cdot \frac{1}{\sqrt{\rho_{+} - 1} \pi} \bigg[ y - \arctan(y) \bigg]^{\frac{\sqrt{\rho_{+} - 1} \pi \epsilon}{2n}}_{-\frac{\sqrt{\rho_{+} - 1} \pi \epsilon}{2n}} 
     \\ &= \frac{(n + 1) \epsilon}{6} + O(n \epsilon^3).
\end{align}
where in the first equality we used that $\cos^2(\frac{\pi}{2}-\pi f) = \sin^2(\pi f) = (\pi f - \frac{(\pi f)^3}{6} + O(\pi f)^5)^2 = \pi^2 f^2 - \frac{\pi^4 f^4}{3} + O(f^6)$, and in the third equality we used the change of variables $y = \sqrt{\rho_{+} - 1} \pi f$. Thus, we obtain that
\begin{align}
    \int_{0}^{1} g'(\pi(1-2f)) r_n(2\pi f) \, df = \frac{2n^2}{\pi^2 \epsilon (n-1)}  + \frac{(n + 1) \epsilon}{6} + O\bigg( \frac{n}{\epsilon \rho_{+}} + \frac{\epsilon}{n} + n \epsilon^3 + 1 \bigg).
\end{align}
Plugging this back into equation \eqref{eq:18_copy}, we get:
\begin{align}
    \mathrm{Tr}[\mathrm{Var}(x_k)] = \gamma^2 \sum_{i=1}^{d} (u_i^{\top} \Sigma u_i) \frac{\frac{2n^2}{\pi^2 \epsilon (n-1)}  + \frac{(n + 1) \epsilon}{6} + O\left( \frac{n}{\epsilon \rho_{+}^{i}} + \frac{\epsilon}{n} + n \epsilon^3 + 1 \right)}{2 (1-\alpha) \sqrt{(1-\alpha)^2+ \gamma^2 \lambda_i^2 - 2 (1 + \alpha) \gamma \lambda_i}}
\end{align}
Finally, if we set $\epsilon = n^{-\delta}$ for any $\delta \in (0,1)$, $\sqrt{\rho_{+}} \gg n/\epsilon \gg 1$ is fulfilled and we obtain that 
\begin{align}
    \mathrm{Tr}[\mathrm{Var}(x_k)] &= \gamma^2 \sum_{i=1}^{d} (u_i^{\top} \Sigma u_i) \frac{\frac{2n^{2+\delta}}{\pi^2 (n-1)}  + \frac{(n + 1) n^{-\delta}}{6} + O\left( \frac{n^{1+\delta}}{\rho_{+}^{i}} + n^{-1-\delta} + n^{1-3\delta} + 1 \right)}{2 (1-\alpha) \sqrt{(1-\alpha)^2+ \gamma^2 \lambda_i^2 - 2 (1 + \alpha) \gamma \lambda_i}} \\ &= \gamma^2 \sum_{i=1}^{d} (u_i^{\top} \Sigma u_i) \frac{\bigg(\frac{2n^{2+\delta}}{\pi^2 (n-1)}  
    + O\left( \frac{\gamma^2 \lambda_i^2 n^{1+\delta}}{(1-\alpha)^2} + n^{1-\delta} + 1 \right) \bigg) (1+O(\frac{\gamma \lambda_i}{1-\alpha})) }{2 (1-\alpha)^2} \\ &= \gamma^2 \sum_{i=1}^{d} (u_i^{\top} \Sigma u_i) \frac{  
    n^{2+\delta}+O\bigg( \frac{\gamma^2 \lambda_i^2 n^{2+\delta}}{(1-\alpha)^2} + n^{2-\delta} + n + \frac{\gamma \lambda_i}{1-\alpha} \bigg) }{\pi^2 (n-1) (1-\alpha)^2}.
\end{align}
As in the case of SO, the argument is analogous and yields the same result for SNAG.
\qed

\section{Further experiments and experimental details} \label{sec:more_experiments}

We provide mean squared error values and plots of the squared distance to the optimum for additional combinations of $n$, $\gamma$ and $\alpha$. In \autoref{fig:table1_2} we show the plots for SNAG with the three shuffling schemes in the setting of \autoref{table:1}, which are very similar to the SGDM plots in \autoref{fig:table1_1}.
\begin{figure}
    \centering
    \includegraphics[width=0.99\textwidth]{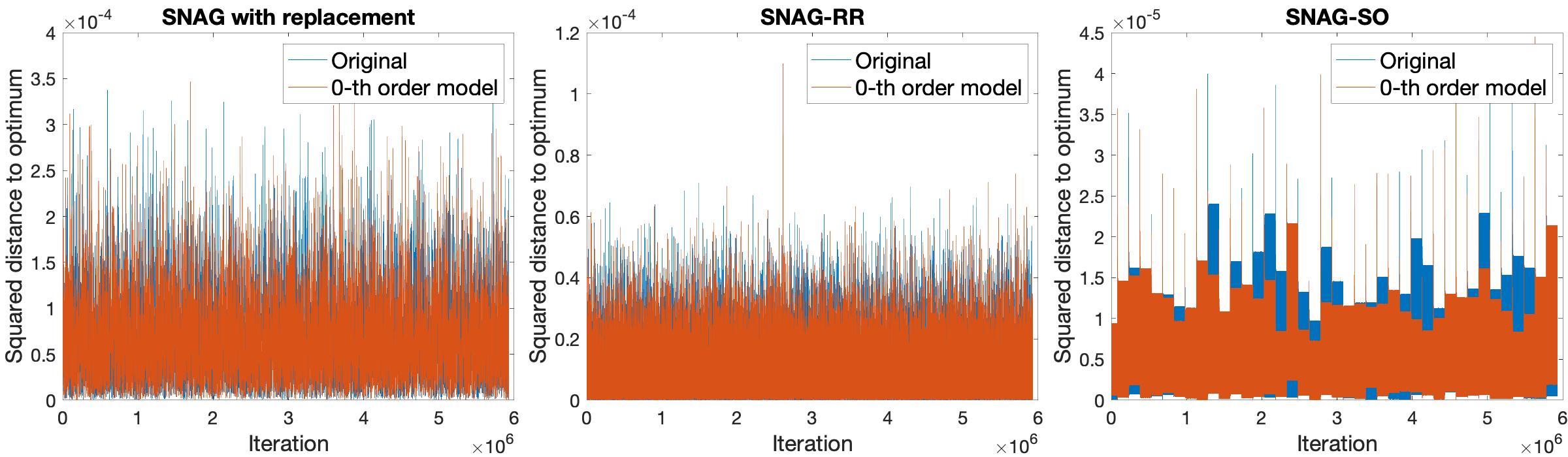}
    \caption{Plots for SNAG with the three shuffling schemes, in the setting of \autoref{table:1}.}
    \label{fig:table1_2}
\end{figure}

In \autoref{table:1_0.9} we show mean squared error values for SGDM and SNAG in the three shuffling schemes, taking the setting from \autoref{table:1} but changing $\alpha = 0.8$ by $\alpha = 0.9$ (there would be no use in showing values for SGD again because it does not depend on $\alpha$). Comparing with \autoref{table:1}, we see observe the dependency on $1/(1-\alpha)$ for schemes with replacement (see \eqref{eq:replacement_limit}), e.g. for SGDM the theoretical values are \num{1.2787e-04} and \num{6.3935e-05}, whose ratio is $2 = (1-0.8)/(1-0.9)$. This is expected, because we are in the regime $\gamma \lambda_i \ll 1-\alpha$ for which equation \eqref{eq:replacement_limit} holds, since for $\lambda = 0.2194$ we have $\gamma \lambda_i = \num{1.097e-04}$ and for $\alpha = 0.9$, $1-\alpha = 0.1$.
We also observe the approximate dependency on $1/(1-\alpha)^2$ for RR and SO (see \eqref{eq:variance_sgdm_so_prop} and \eqref{eq:var_sgdm_RR}), e.g. for SGDM-RR the theoretical values are \num{3.3099e-05} and \num{9.3300e-06}, whose ratio is $3.5476 \approx 4 = (1-0.8)^2/(1-0.9)^2$. Expectedly, this approximation is less exact, because equations \eqref{eq:variance_sgdm_so_prop} and \eqref{eq:var_sgdm_RR} rely on the assumption $\gamma \lambda_i n \ll 1-\alpha$, and for $\lambda = 0.2194$ we have $\gamma \lambda_i n = 0.1097$ while for $\alpha = 0.9$, $1-\alpha = 0.1$. That is, $\gamma \lambda_i n \ll 1-\alpha$ does not quite hold since the two quantities are of the same order.

\begin{table}[]
\small
\centering
\begin{tabular}{|l|l|l|l|}
\hline
Algorithm & Full noise & 0th order noise & Theory  \\ \hline
SGDM & $\num{1.2878e-04} \pm \num{3.5199e-07}$ & $\num{1.2832e-04} \pm \num{4.9145e-07}$ & $\num{1.2787e-04}$
\\ \hline
SGDM-RR & $\num{3.3371e-05} \pm \num{5.8192e-08}$ &  $\num{3.3050e-05} \pm \num{3.7389e-08}$ & $\num{3.3099e-05}$
\\ \hline
SGDM-SO & $\num{2.0123e-05} \pm \num{3.0961e-07}$ &  $\num{2.0179e-05} \pm \num{3.0246e-07}$ & $\num{2.0113e-05}$ 
\\ \hline
SNAG & $\num{1.2868e-04} \pm \num{3.4302e-07}$ & $\num{1.2820e-04} \pm \num{4.9187e-07}$ & $\num{1.2776e-04}$
\\ \hline
SNAG-RR & $\num{3.3319e-05} \pm \num{5.6173e-08}$ &  $\num{3.2997e-05} \pm \num{3.6883e-08}$ & $\num{3.3046e-05}$
\\ \hline
SNAG-SO & $\num{2.0089e-05} \pm \num{3.0906e-07}$ & $\num{2.0144e-05} \pm \num{3.0193e-07}$ & $\num{2.0077e-05}$
\\ \hline
\end{tabular}
\normalsize
\vspace{3pt}
\caption{Mean squared errors $\mathbb{E}[\|x_k - x^{\star}\|^2]$ for $n = 1000$, $d = 5$;
$\gamma = 0.0005$, $\alpha = 0.9$ (seed 38). The estimates and and their standard deviations are computed over 10 runs of \num{6e6} iterations each. The eigenvalues $\lambda_i$ are $0.1807$, $0.1951$, $0.1998$, $0.2033$, $0.2194$. The values $u_i^{\top} \Sigma u_i$ are, in order, $0.0019$, $0.0019$, $0.0022$, $0.0020$, $0.0022$. The theoretical errors for SGDM and SNAG given by the approximations \eqref{eq:replacement_limit} and \eqref{eq:replacement_limit_snag} are $\num{1.2787e-04}$.}
\label{table:1_0.9}
\end{table}

In \autoref{table:2} we show mean squared error values for SGD, SGDM and SNAG in the three shuffling schemes, taking the configuration from \autoref{table:1} but changing $\gamma = 0.0005$ by $\gamma = 0.01$, i.e. setting the stepsize 20 times larger. We observe the dependency on $\gamma$ for algorithms with replacement (see \eqref{eq:replacement_limit}), e.g. for SGDM the theoretical values are \num{1.2794e-03} and \num{6.3935e-05}, whose ratio is 20.01. This is expected, because we are in the regime $\gamma \lambda_i \ll 1-\alpha$ for which equation \eqref{eq:replacement_limit} holds, since for $\lambda = 0.2194$ we have $\gamma \lambda_i = 0.002194$ and $1-\alpha = 0.2$.
The approximate dependency on $\gamma^2$ for algorithms with RR and SO (see \eqref{eq:variance_sgdm_so_prop} and \eqref{eq:var_sgdm_RR}) is not so clearly observed. While we would expect a ratio around 400, for SGDM-SO the theoretical values are \num{1.0231e-03} and \num{5.1800e-6}, whose ratio is 197.50, and for SGDM-RR the theoretical values are \num{1.0479e-03} and \num{9.3300e-06}, whose ratio is 112.31. The reason for the discrepancy is that the assumption $\gamma \lambda_i n \ll 1-\alpha$ which underlies \eqref{eq:variance_sgdm_so_prop} and \eqref{eq:var_sgdm_RR} does not hold when $\gamma = 0.01$: for $\lambda_i = 0.2194$ and $\gamma = 0.01$ we have that $\gamma \lambda_i n = 2.194$, while $1-\alpha = 0.2$. Still, note that as in \autoref{table:1}, our theoretical values match the experimental values under the zero-th order noise, and are very close to the experimental values under the standard stochastic noise. 

\autoref{table:2_0.9} has the same configuration as \autoref{table:2}, changing $\alpha = 0.8$ by $\alpha = 0.9$. Comparing with \autoref{table:2}, we observe the dependency on $1/(1-\alpha)$ for schemes with replacement, e.g. for SGDM-RR the theoretical values are \num{2.5587e-03} and \num{1.2794e-03}, whose ratio is 1.9999. As commented in the previous paragraph, this is expected because \eqref{eq:replacement_limit} is valid as the assumptions $\gamma \lambda_i \ll 1-\alpha$ hold.
The dependency on $1/(1-\alpha)^2$ for RR and SO is not observed clearly, e.g. for SGDM-RR the theoretical values are \num{2.3142e-03} and \num{1.0479e-03}, whose ratio is $2.2084 \not\approx 4 = (1-0.8)^2/(1-0.9)^2$. As mentioned in the previous paragraph, this is because the assumption $\gamma \lambda_i n \ll 1-\alpha$ underlying \eqref{eq:variance_sgdm_so_prop} and \eqref{eq:var_sgdm_RR} does not hold.

\begin{table}[]
\small
\centering
\begin{tabular}{|l|l|l|l|}
\hline
Algorithm & Full noise & 0th order noise & Theory  \\ \hline
SGD & $\num{2.5817e-04} \pm \num{5.1169e-07}$ & $\num{2.5653e-04} \pm \num{6.0118e-07}$ & $\num{2.5599e-04}$  \\ \hline
SGD-RR & $\num{1.1214e-04} \pm \num{1.7867e-07}$ & $\num{1.1038e-04} \pm \num{1.1605e-07}$ & $\num{1.1057e-04}$ \\ \hline
SGD-SO & $\num{8.1243e-05} \pm \num{7.9598e-07}$ & $\num{7.9693e-05} \pm \num{1.0789e-06}$ & $\num{8.0060e-05}$ \\ \hline
SGDM & $\num{1.3190e-03} \pm \num{1.5184e-06}$ & $\num{1.2802e-03} \pm \num{1.4555e-06}$ & $\num{1.2794e-03}$
\\ \hline
SGDM-RR & $\num{1.0934e-03} \pm \num{1.0756e-06}$ &  $\num{1.0456e-03} \pm \num{7.5995e-07}$ & $\num{1.0479e-03}$
\\ \hline
SGDM-SO & $\num{1.0708e-03} \pm \num{7.1175e-06}$ &  $\num{1.0165e-03} \pm \num{9.8846e-06}$ & $\num{1.0231e-03}$ 
\\ \hline
SNAG & $\num{1.3084e-03} \pm \num{1.5119e-06}$ & $\num{1.2701e-03} \pm \num{1.4484e-06}$ & $\num{1.2692e-03}$
\\ \hline
SNAG-RR & $\num{1.0829e-03} \pm \num{1.0697e-06}$ &  $\num{1.0357e-03} \pm \num{7.5421e-07}$ & $\num{1.0380e-03}$
\\ \hline
SNAG-SO & $\num{1.0601e-03} \pm \num{7.0729e-06}$ & $\num{1.0065e-03} \pm \num{9.8162e-06}$ & $\num{1.0130e-03}$
\\ \hline
\end{tabular}
\normalsize
\vspace{3pt}
\caption{Mean squared errors $\mathbb{E}[\|x_k - x^{\star}\|^2]$ for $n = 1000$, $d = 5$;
$\gamma = 0.01$, $\alpha = 0.8$ (seed 38). The estimates and and their standard deviations are computed over 10 runs of \num{6e6} iterations each. The eigenvalues $\lambda_i$ are $0.1807$, $0.1951$, $0.1998$, $0.2033$, $0.2194$. The values $u_i^{\top} \Sigma u_i$ are, in order, $0.0019$, $0.0019$, $0.0022$, $0.0020$, $0.0022$. The theoretical errors for SGDM and SNAG given by the approximations \eqref{eq:replacement_limit} and \eqref{eq:replacement_limit_snag} are $\num{1.2787e-03}$.}
\label{table:2}
\end{table}

\begin{table}[]
\small
\centering
\begin{tabular}{|l|l|l|l|}
\hline
Algorithm & Full noise & 0th order noise & Theory  \\ \hline
SGDM & $\num{2.7214e-03} \pm \num{2.2397e-06}$ & $\num{2.5600e-03} \pm \num{2.3748e-06}$ & $\num{2.5587e-03}$
\\ \hline
SGDM-RR & $\num{2.4920e-03} \pm \num{1.7013e-06}$ &  $\num{2.3095e-03} \pm \num{1.3852e-06}$ & $\num{2.3142e-03}$
\\ \hline
SGDM-SO & $\num{2.4830e-03} \pm \num{1.9304e-05}$ &  $\num{2.3022e-03} \pm \num{1.3852e-06}$ & $\num{2.3036e-03}$ 
\\ \hline
SNAG & $\num{2.6709e-03} \pm \num{2.2728e-06}$ & $\num{2.5150e-03} \pm \num{2.3113e-06}$ & $\num{2.5135e-03}$
\\ \hline
SNAG-RR & $\num{2.4415e-03} \pm \num{1.6206e-06}$ &  $\num{2.2646e-03} \pm \num{1.3942e-06}$ & $\num{2.2693e-03}$
\\ \hline
SNAG-SO & $\num{2.4319e-03} \pm \num{1.8966e-05}$ & $\num{2.2570e-03} \pm \num{1.8489e-05}$ & $\num{2.2584e-03}$
\\ \hline
\end{tabular}
\normalsize
\vspace{3pt}
\caption{Mean squared errors $\mathbb{E}[\|x_k - x^{\star}\|^2]$ for $n = 1000$, $d = 5$;
$\gamma = 0.01$, $\alpha = 0.9$ (seed 38). The estimates and and their standard deviations are computed over 10 runs of \num{6e6} iterations each. The eigenvalues $\lambda_i$ are $0.1807$, $0.1951$, $0.1998$, $0.2033$, $0.2194$. The values $u_i^{\top} \Sigma u_i$ are, in order, $0.0019$, $0.0019$, $0.0022$, $0.0020$, $0.0022$. The theoretical errors for SGDM and SNAG given by the approximations \eqref{eq:replacement_limit} and \eqref{eq:replacement_limit_snag} are $\num{2.5573e-03}$.}
\label{table:2_0.9}
\end{table}

In \autoref{table:3} 
we change the number of functions from $n=1000$ to $n=10$. In this setting, the eigenvalues $\lambda_i$ of $A$ are no longer of the same order: the ratio of the largest one (0.3422) to the smallest one (0.0074) is 46.24. The values $u_i^{\top} \Sigma u_i$ are also of different orders. We make two observations regarding the validity of the zero-th order noise model in this setting:
\begin{itemize}[leftmargin=20pt,topsep=-2pt,itemsep=-2pt]
    \item For algorithms with replacement, the mean squared errors for the standard noise and the zero-th order noise are still very close, i.e. well within the respective confidence intervals. 
    Thus, the zero-th order noise model is a good proxy of the actual noise.
    \item For RR, and even more so for SO, the mean squared errors for the two kinds of noise are not close. For RR, the errors under the standard noise are larger, but both are of the same order. For SO, the errors under the standard noise are between 2 and 5 times larger. Hence, the zero-th order noise model is not a good proxy of the actual noise for algorithms with RR or SO when the $u_i^{\top} \Sigma u_i$ values are highly unequal. Apparently, the error introduced by the zero-th order noise model is of the same order as the estimate. As discussed in \autoref{subsec:difference}, it would be interesting to study a higher-order noise model that captures these behaviors; it is left for future work. 
\end{itemize}
Note that in \autoref{table:3}, for the algorithms with replacement and RR the difference between the theoretical errors and the experimental errors under the zero-th order noise model is within small multiples of the variance. This is not the case for SO algorithms, although the values are still very close. The reason behind the discrepancy is that the experimental error estimates are slightly biased by the transient regimes in between periods with different permutations (see the details on the SO experiments in \autoref{subsec:additional_exp_details}). This bias could be reduced or eliminated by increasing the number of iterations discarded at the beginning of each period, when averaging the errors.

\begin{table}[]
\small
\centering
\begin{tabular}{|l|l|l|l|}
\hline
Algorithm & Full noise & 0th order noise & Theory  \\ \hline
SGD & $\num{2.3563e-07} \pm \num{4.4271e-09}$ & $\num{2.3455e-07} \pm \num{8.0714e-09}$ & $\num{2.4547e-07}$  \\ \hline
SGD-RR & $\num{3.3438e-11} \pm \num{1.2872e-14}$ & $\num{2.5371e-11} \pm \num{3.5778e-15}$ & $\num{2.5374e-11}$ \\ \hline
SGD-SO & $\num{3.0928e-11} \pm \num{6.8468e-13}$ & $\num{1.2771e-11} \pm \num{1.0416e-13}$ & $\num{1.2688e-11}$ \\ \hline
SGDM & $\num{1.2101e-06} \pm \num{1.3509e-08}$ & $\num{1.2103e-06} \pm \num{2.2300e-08}$ & $\num{1.2272e-06}$
\\ \hline
SGDM-RR & $\num{3.0289e-10} \pm \num{1.0927e-13}$ &  $\num{2.4381e-10} \pm \num{5.3817e-14}$ & $\num{2.4386e-10}$
\\ \hline
SGDM-SO & $\num{8.2068e-11} \pm \num{2.3103e-12}$ &  $\num{2.6884e-11} \pm \num{3.4432e-13}$ & $\num{2.6210e-11}$ 
\\ \hline
SNAG & $\num{1.2087e-06} \pm \num{1.3329e-08}$ & $\num{1.2092e-06} \pm \num{2.1955e-08}$ & $\num{1.2272e-06}$
\\ \hline
SNAG-RR & $\num{3.2745e-10} \pm \num{1.2765e-13}$ &  $\num{2.4376e-10} \pm \num{5.3449e-14}$ & $\num{2.4380e-10}$
\\ \hline
SNAG-SO & $\num{1.3884e-10} \pm \num{4.7755e-12}$ & $\num{2.7338e-11} \pm \num{3.5023e-13}$ & $\num{2.6210e-11}$
\\ \hline
\end{tabular}
\normalsize
\vspace{3pt}
\caption{Mean squared errors $\mathbb{E}[\|x_k - x^{\star}\|^2]$ for $n = 10$, $d = 5$;
$\gamma = 0.0002$, $\alpha = 0.8$ (seed 38). The estimates and and their standard deviations are computed over 10 runs of \num{6e6} iterations each. The eigenvalues $\lambda_i$ are $0.0074$, $0.0947$, $0.1322$, $0.2763$, $0.3422$. The values $u_i^{\top} \Sigma u_i$ are, in the same order, $\num{4.7496e-06}$, $\num{1.7296e-05}$, $\num{1.4069e-04}$, $\num{4.8573e-05}$, $\num{1.3472e-04}$. The theoretical errors for SGDM and SNAG given by the approximations \eqref{eq:replacement_limit} and \eqref{eq:replacement_limit_snag} are $\num{1.2273e-06}$.}
\label{table:3}
\end{table}

\autoref{table:3_0.9} is in the same configuration as \autoref{table:3}, but changing $\alpha = 0.8$ by $\alpha = 0.9$. The dependency on $1/(1-\alpha)$ for algorithms with replacement holds with high precision, e.g. for SGDM the theoretical values are \num{2.4545e-06} and \num{1.2272e-06}, whose ratio is $2.00 \approx (1-0.2)/(1-0.1)$. This is because the assumption $\gamma \lambda_i \ll 1-\alpha$ holds: for $\lambda_i = 0.3422$, $\gamma \lambda_i = \num{6.844e-05}$, while $1-\alpha = 0.1$. The dependency on $1/(1-\alpha)^2$ for RR and SO prescribed by \eqref{eq:variance_sgdm_so_prop} and \eqref{eq:var_sgdm_RR} does not hold, e.g. for SGDM-RR the theoretical values are \num{5.5325e-10} and \num{2.4386e-10}, whose ratio is $2.2687 \not\approx 4 = (1-0.2)^2/(1-0.1)^2$, and for SGDM-SO the theoretical values are \num{2.5407e-11} and \num{2.6210e-11}, whose ratio is $0.9693 \not\approx 4 = (1-0.2)^2/(1-0.1)^2$. 
While the assumption $\gamma \lambda_i n \ll 1-\alpha$ holds, because for $\lambda_i = 0.3422$, $\gamma \lambda_i = \num{6.844e-04}$, while $1-\alpha = 0.1$, the assumption $n \gg 1$ does not hold because $n$ is just 10. This explains why equations \eqref{eq:variance_sgdm_so_prop} and \eqref{eq:var_sgdm_RR} do not work in this setting.

\begin{table}[]
\small
\centering
\begin{tabular}{|l|l|l|l|}
\hline
Algorithm & Full noise & 0th order noise & Theory  \\ \hline
SGDM & $\num{2.4318e-06} \pm \num{2.5268e-08}$ & $\num{2.4476e-06} \pm \num{3.2703e-08}$ & $\num{2.4545e-06}$
\\ \hline
SGDM-RR & $\num{6.3833e-10} \pm \num{2.3336e-13}$ &  $\num{5.5307e-10} \pm \num{1.7729e-13}$ & $\num{5.5325e-10}$
\\ \hline
SGDM-SO & $\num{8.3591e-11} \pm \num{2.5475e-12}$ &  $\num{2.5792e-11} \pm \num{3.3510e-13}$ & $\num{2.5407e-11}$ 
\\ \hline
SNAG & $\num{2.4293e-06} \pm \num{2.4789e-08}$ & $\num{2.4456e-06} \pm \num{3.1930e-08}$ & $\num{2.4538e-06}$
\\ \hline
SNAG-RR & $\num{6.8118e-10} \pm \num{2.6013e-13}$ &  $\num{5.5282e-10} \pm \num{1.7732e-13}$ & $\num{5.5299e-10}$
\\ \hline
SNAG-SO & $\num{1.5968e-10} \pm \num{5.9892e-12}$ & $\num{2.6679e-11} \pm \num{3.4739e-13}$ & $\num{2.5407e-11}$
\\ \hline
\end{tabular}
\normalsize
\vspace{3pt}
\caption{Mean squared errors $\mathbb{E}[\|x_k - x^{\star}\|^2]$ for $n = 10$, $d = 5$;
$\gamma = 0.0002$, $\alpha = 0.9$ (seed 38). The estimates and and their standard deviations are computed over 10 runs of \num{6e6} iterations each. The eigenvalues $\lambda_i$ are $0.0074$, $0.0947$, $0.1322$, $0.2763$, $0.3422$. The values $u_i^{\top} \Sigma u_i$ are, in the same order, $\num{4.7496e-06}$, $\num{1.7296e-05}$, $\num{1.4069e-04}$, $\num{4.8573e-05}$, $\num{1.3472e-04}$. The theoretical errors for SGDM and SNAG given by the approximations \eqref{eq:replacement_limit} and \eqref{eq:replacement_limit_snag} is $\num{2.4546e-06}$.}
\label{table:3_0.9}
\end{table}

In \autoref{table:4}, we take the configuration from \autoref{table:3} but change $\gamma = 0.0002$ to $\gamma = 0.002$, i.e. we set the stepsize 10 times larger. We observe the dependency on $\gamma$ for algorithms with replacement (see \eqref{eq:replacement_limit}), e.g. for SGDM the theoretical values are \num{1.2274e-05} and \num{1.2272e-06}, whose ratio is 10.001. This is expected, because we are in the regime $\gamma \lambda_i \ll 1-\alpha$ for which equation \eqref{eq:replacement_limit} holds, since for $\lambda = 0.3422$ and $\gamma = 0.002$ we have $\gamma \lambda_i = \num{6.844e-04}$ and $1-\alpha = 0.2$.
The approximate dependency on $\gamma^2$ for algorithms with RR and SO prescribed by \eqref{eq:variance_sgdm_so_prop} and \eqref{eq:var_sgdm_RR} is also observed. We expect a ratio of $10^2 = 100$. For SGDM-SO the theoretical values are \num{2.6262e-09} and \num{2.6210e-11}, whose ratio is 100.198, and for SGDM-RR the theoretical values are \num{2.4359e-08} and \num{2.4386e-10}, whose ratio is 99.889. Note that the assumption $\gamma \lambda_i n \ll 1-\alpha$ which underlies \eqref{eq:variance_sgdm_so_prop} and \eqref{eq:var_sgdm_RR} holds as explained in the previous paragraph, but $n \gg 1$ does not. Still, as we see the $\gamma^2$ dependency is preserved even when $n \not\gg 1$, as one can see by looking at the arguments. 

\autoref{fig:table4} shows runs for each algorithm in the setting of \autoref{table:4}. Qualitatively, the plots match the analysis we developed around \autoref{table:3}:
\begin{itemize}[leftmargin=20pt,topsep=-2pt,itemsep=-2pt]
    \item In the left column of \autoref{fig:table4}, we observe qualitatively similar behaviors for both kinds of noise. 
    \item In the middle column of \autoref{fig:table4} we see that the sequences of squared distances for RR are qualitatively similar, but higher for the standard noise than for the zero-th order noise. In the left column (see \autoref{subsec:additional_exp_details} for details on the SO experiments and plots), we see that for the standard noise, the squared error stabilizes around widely different values for different permutations, while for the zero-th order noise the differences are smaller and the errors consistently lower. 
\end{itemize}

\begin{table}[]
\small
\centering
\begin{tabular}{|l|l|l|l|}
\hline
Algorithm & Full noise & 0th order noise & Theory  \\ \hline
SGD & $\num{2.4322e-06} \pm \num{2.5264e-08}$ & $\num{2.4479e-06} \pm \num{3.2691e-08}$ & $\num{2.4550e-06}$  \\ \hline
SGD-RR & $\num{3.4625e-09} \pm \num{1.4967e-12}$ & $\num{2.5355e-09} \pm \num{3.5896e-13}$ & $\num{2.5358e-09}$ \\ \hline
SGD-SO & $\num{5.3819e-09} \pm \num{2.0735e-10}$ & $\num{1.2637e-09} \pm \num{1.0370e-11}$ & $\num{1.2694e-09}$ \\ \hline
SGDM & $\num{1.2254e-05} \pm \num{9.6173e-08}$ & $\num{1.2363e-05} \pm \num{7.4615e-08}$ & $\num{1.2274e-05}$
\\ \hline
SGDM-RR & $\num{3.0378e-08} \pm \num{1.1153e-11}$ &  $\num{2.4353e-08} \pm \num{5.5095e-12}$ & $\num{2.4359e-08}$
\\ \hline
SGDM-SO & $\num{9.6012e-09} \pm \num{3.2920e-10}$ &  $\num{2.6070e-09} \pm \num{3.3994e-11}$ & $\num{2.6262e-09}$ 
\\ \hline
SNAG & $\num{1.2231e-05} \pm \num{9.3367e-08}$ & $\num{1.2346e-05} \pm \num{7.4216e-08}$ & $\num{1.2260e-05}$
\\ \hline
SNAG-RR & $\num{3.2857e-08} \pm \num{1.3083e-11}$ &  $\num{2.4298e-08} \pm \num{5.4395e-12}$ & $\num{2.4303e-08}$
\\ \hline
SNAG-SO & $\num{1.8442e-08} \pm \num{7.8794e-10}$ & $\num{2.6070e-09} \pm \num{3.3993e-11}$ & $\num{2.6263e-09}$
\\ \hline
\end{tabular}
\normalsize
\vspace{3pt}
\caption{Mean squared errors $\mathbb{E}[\|x_k - x^{\star}\|^2]$ for $n = 10$, $d = 5$;
$\gamma = 0.002$, $\alpha = 0.8$ (seed 38). The estimates and and their standard deviations are computed over 10 runs of \num{6e6} iterations each. The eigenvalues $\lambda_i$ are $0.0074$, $0.0947$, $0.1322$, $0.2763$, $0.3422$. The values $u_i^{\top} \Sigma u_i$ are, in the same order, $\num{4.7496e-06}$, $\num{1.7296e-05}$, $\num{1.4069e-04}$, $\num{4.8573e-05}$, $\num{1.3472e-04}$. The theoretical errors for SGDM and SNAG given by the approximations \eqref{eq:replacement_limit} and \eqref{eq:replacement_limit_snag} are $\num{1.2273e-05}$.}
\label{table:4}
\end{table}

Finally, \autoref{table:4_0.9} has the same configuration as \autoref{table:4}, but changing $\alpha = 0.8$ to $\alpha = 0.9$. We observe the same comparative behavior as in \autoref{table:3_0.9} vs. \autoref{table:3}.

\begin{table}[]
\small
\centering
\begin{tabular}{|l|l|l|l|}
\hline
Algorithm & Full noise & 0th order noise & Theory  \\ \hline
SGDM & $\num{2.4631e-05} \pm \num{1.3797e-07}$ & $\num{2.4698e-05} \pm \num{1.1961e-07}$ & $\num{2.4548e-05}$
\\ \hline
SGDM-RR & $\num{6.3914e-08} \pm \num{2.4422e-11}$ &  $\num{5.5242e-08} \pm \num{1.7809e-11}$ & $\num{5.5260e-08}$
\\ \hline
SGDM-SO & $\num{8.8167e-09} \pm \num{2.8922e-10}$ &  $\num{2.5271e-09} \pm \num{3.3276e-11}$ & $\num{2.5459e-09}$ 
\\ \hline
SNAG & $\num{2.4559e-05} \pm \num{1.3481e-07}$ & $\num{2.4633e-05} \pm \num{1.2221e-07}$ & $\num{2.4486e-05}$
\\ \hline
SNAG-RR & $\num{6.8010e-08} \pm \num{2.7241e-11}$ &  $\num{5.4985e-08} \pm \num{1.7664e-11}$ & $\num{5.5002e-08}$
\\ \hline
SNAG-SO & $\num{1.8086e-08} \pm \num{7.5204e-10}$ & $\num{2.5277e-09} \pm \num{3.3283e-11}$ & $\num{2.5465e-09}$
\\ \hline
\end{tabular}
\normalsize
\vspace{3pt}
\caption{Mean squared errors $\mathbb{E}[\|x_k - x^{\star}\|^2]$ for $n = 10$, $d = 5$;
$\gamma = 0.002$, $\alpha = 0.9$ (seed 38). The estimates and and their standard deviations are computed over 10 runs of \num{6e6} iterations each. The eigenvalues $\lambda_i$ are $0.0074$, $0.0947$, $0.1322$, $0.2763$, $0.3422$. The values $u_i^{\top} \Sigma u_i$ are, in the same order, $\num{4.7496e-06}$, $\num{1.7296e-05}$, $\num{1.4069e-04}$, $\num{4.8573e-05}$, $\num{1.3472e-04}$. The theoretical errors for SGDM and SNAG given by the approximations \eqref{eq:replacement_limit} and \eqref{eq:replacement_limit_snag} are $\num{2.4546e-05}$.}
\label{table:4_0.9}
\end{table}

\begin{figure}[H]
    \centering
    \includegraphics[width=0.99\textwidth]{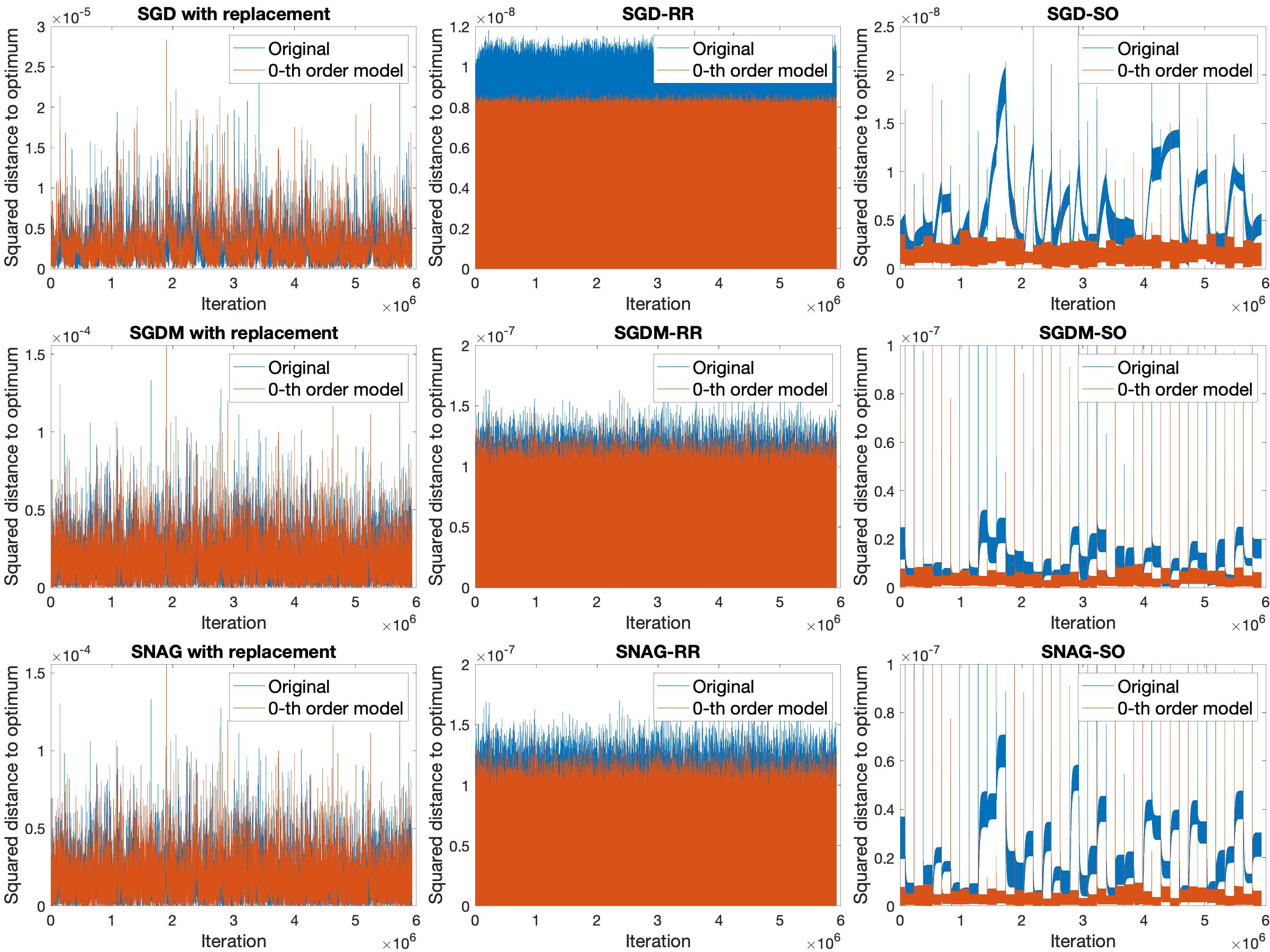}
    \caption{Plots for SGD, SNAG and SGDM with the three shuffling schemes, in the setting of \autoref{table:4}.}
    \label{fig:table4}
\end{figure}

\subsection{Additional experimental details} \label{subsec:additional_exp_details}
The code for the experiments, which can be found in a folder in the supplementary material, is in MATLAB. There is a README file in the code folder. The code was run in a personal laptop. All the experiments required for \autoref{table:1} took under 6 hours to run, and experiments in the other tables took a proportional time. As shown in the figures, all the runs are \num{6e6} iterations long. Since we care about the stationary error, for the purpose of computing the mean squared errors shown in the tables we discard the first \num{1.2e5} iterations of each run.

The experiments for the SO scheme need further explanation. Since the theoretical analysis of SO we average over the $n!$ permutations of the $n$ functions, in our experiments we need to average over the permutations as well. We divide every run of \num{6e6} iterations into 40 periods of \num{1.5e5} iterations each, and we use one random permutation in each period. When we switch from one permutation to the next, the system goes through a transient period until it stabilizes again. This can be seen in the SO plots of \autoref{fig:table1_1}, \autoref{fig:table1_2} and \autoref{fig:table4}. Since the focus is on the stationary error, for the purpose of computing the mean squared errors we discard the first \num{1.2e5} iterations of each period and use just the last \num{3e4} iterations of the period.

\end{document}